\documentclass[leqno]{amsart}
\usepackage{amsmath}
\usepackage{amsfonts}
\usepackage{amssymb}
\usepackage{pdfsync}
\usepackage{color,graphicx,tikz-cd}
\usepackage{a4wide}
\usepackage{amscd,amsthm,latexsym}
\usepackage{hyperref}
\usepackage{kotex}
\usepackage{relsize}
\usepackage{tikz}
\usepackage[misc]{ifsym}
\usetikzlibrary{snakes}
\usepackage{cite}

\allowdisplaybreaks %

\title{Product formulas for certain skew tableaux}

\author{Jang Soo Kim}
\address{
Department of Mathematics, Sungkyunkwan University, Suwon 16420,
South Korea}
\email{jangsookim@skku.edu}

\author{Meesue Yoo}
\address{
Applied Algebra and Optimization Research Center, Sungkyunkwan University, Suwon 16420,
South Korea}
\email{meesue.yoo@skku.edu (\Letter)}

\date{\today}

\thanks{The first author was supported by NRF grants \#2016R1D1A1A09917506 and \#2016R1A5A1008055.
The second author was supported by NRF grants \#2016R1A5A1008055 and \#2017R1C1B2005653.}

\keywords{standard Young tableau, product formula, skew shapes, Selberg integral}

\subjclass[2010]{Primary: 05A15; Secondary: 05A30}

\date{\today}

\newtheorem{thm}{Theorem}[section]
\newtheorem{lem}[thm]{Lemma}

\newtheorem{cor}[thm]{Corollary}
\theoremstyle{definition}

\newtheorem{conj}[thm]{Conjecture}

\newcommand\flr[1]{\left\lfloor #1\right\rfloor}

\newcommand\LL{\mathcal{L}}

\newcommand\MM{\mathbf{M}}
\newcommand\VV{\mathbf{V}}

\newcommand\rdiag{\operatorname{rdiag}}
\newcommand\tr{\operatorname{tr}}
\newcommand\lm{{\lambda/\mu}}
\newcommand\nn{\mathbf{n}}
\newcommand\maj{\operatorname{maj}}
\newcommand\SSYT{\operatorname{SSYT}}
\newcommand\SYT{\operatorname{SYT}}
\newcommand\Par{\mathrm{Par}}
\newcommand\RPP{\operatorname{RPP}}
\newcommand\RST{\operatorname{RST}}

\newcommand\PPhi{\:\Phi}

\newcommand\cell[3]{
\def\i{#1} \def\j{#2} \def\entry{#3}
\draw (\j-1,-\i)--(\j,-\i)--(\j,-\i+1);
\node at (\j-.5,-\i+.5) {\entry};
}

\begin{document}

\begin{abstract}
The hook length formula gives a product formula for the number of standard Young tableaux of a partition shape. The number of standard Young tableaux of a skew shape does not always have a product formula. However, for some special skew shapes, there is a product formula. Recently, Morales, Pak and Panova joint with Krattenthaler conjectured a product formula for the number of standard Young tableaux of shape  $\lambda/\mu$ for $\lambda=((2a+c)^{c+a},(a+c)^a)$ and $\mu=(a+1,a^{a-1},1)$. They also conjectured a product formula for the number of standard Young tableaux of a certain skew shifted shape. In this paper we prove their conjectures using Selberg-type integrals. We also give a generalization of MacMahon's box theorem and a product formula for the trace generating function for a certain skew shape, which is a generalization of a recent result of Morales, Pak and Panova. 
\end{abstract}

\maketitle


\section{Introduction}

For a partition $\lambda$ of $n$, the number $f^\lambda$ of standard Young tableaux of shape $\lambda$ is given by the celebrated hook length formula
due to Frame, Robins and Thrall \cite{Frame1954}:
\[
f^\lambda = \frac{n!}{\prod_{(i,j)\in \lambda} h_\lambda(i,j)},
\]
where $h_\lambda(i,j)$ is the hook length $\lambda_i+\lambda'_j-i-j+1$. 
In general the number $f^{\lm}$ of standard Young tableaux of a skew shape does not have a product formula because it may have a large prime factor. However, in sporadic cases of skew shapes, some product formulas are known \cite{KimOh, KrattSch,MPP3}.

Recently, Naruse \cite{Naruse} found the following generalization of the hook length formula: 
\begin{equation}
  \label{eq:naruse}
f^{\lm} = |\lm|! \sum_{D\in\mathcal{E}(\lm)} \prod_{(i,j)\in \lambda\setminus D}\frac{1}{h_\lambda(i,j)},  
\end{equation}
where $\mathcal{E}(\lm)$ is the set of subsets $D\subset\lm$, called \emph{excited diagrams}, satisfying certain conditions. 
Morales, Pak and Panova \cite{MPP1} found the following $q$-analog of Naruse's hook length formula using semistandard Young tableaux:
\begin{equation}
  \label{eq:MPP1}
s_{\lm}(1,q,q^2,\dots) = \sum_{T\in\SSYT(\lm)}q^{|T|} =\sum_{D\in\mathcal{E}(\lm)} \prod_{(i,j)\in\lambda\setminus D}\frac{q^{\lambda_j'-i}}{1-q^{h_\lambda(i,j)}}.
\end{equation}
The precise definitions of notations used in the introduction  will be given in Section~\ref{sec:preliminaries}. 

Using \eqref{eq:MPP1}, Morales, Pak and Panova \cite{MPP3} found product formulas for the number of standard Young tableaux of certain skew shapes. 
In \cite{MPP3}, joint with Krattenthaler, they conjectured the following product formula.

\begin{conj}\cite[Conjecture~5.17]{MPP3}
\label{conj:MPP3-1}
Let $\lambda=((n+2a)^{n+a},(n+a)^a)$ and $\mu=(a+1,a^{a-1},1)$.
Then
\[
f^{\lm}= |\lm|!\frac{\PPhi(a)^4\PPhi(n)\PPhi(n+4a)}{\PPhi(2a)^2\PPhi(2n+4a)}
\cdot \frac{a^2((n^2+4an+2a^2)^2-a ^2)}{4a ^2-1},
\]
where $\PPhi(n) = \prod_{i=1}^{n-1}i!$.
\end{conj}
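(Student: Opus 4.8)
The plan is to access $f^{\lm}$ through its natural $q$-deformation, the stable principal specialization $s_{\lm}(1,q,q^2,\dots)$, and to recover the tableaux count by the elementary limit
\[
\frac{f^{\lm}}{|\lm|!} = \lim_{q\to 1}(1-q)^{|\lm|}\, s_{\lm}(1,q,q^2,\dots),
\]
which holds because each factor $q^{\lambda_j'-i}/(1-q^{h_\lambda(i,j)})$ in \eqref{eq:MPP1} contributes a simple pole at $q=1$ with residue $1/h_\lambda(i,j)$, so that the normalized limit reproduces the Naruse sum \eqref{eq:naruse}. First I would apply the Morales--Pak--Panova identity \eqref{eq:MPP1} to rewrite $s_{\lm}(1,q,q^2,\dots)$ as a weighted sum over the excited diagrams $\mathcal{E}(\lm)$. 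For the specific shapes $\lambda=((n+2a)^{n+a},(n+a)^a)$ and $\mu=(a+1,a^{a-1},1)$ the excited moves are confined to a small region governed by $\mu$, so I would parametrize $\mathcal{E}(\lm)$ by a tuple of integers recording how far each diagonal of $\mu$ has been pushed down, subject to the monotonicity/interlacing constraints forced by the excited-diagram rules.

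The heart of the argument is to recognize the resulting multiple sum as a Jackson ($q$-)integral of Selberg type, which is why the $q$-integration measure $\dqx$ enters. After collecting the factors $q^{\lambda_j'-i}/(1-q^{h_\lambda(i,j)})$ attached to each excited diagram, the dependence on the parametrizing integers should separate into a product of $q$-shifted factorials (from the individual hook lengths) together with a squared $q$-Vandermonde term $\prod_{i<j}(\cdots)$ arising from the interaction of hooks across different diagonals. This is exactly the integrand of the Askey--Habsieger--Kadell $q$-Selberg integral at the parameter $\gamma=1$, with $\alpha$ and $\beta$ read off from the row lengths of $\lambda$ and $\mu$. Crucially, the two anomalous cells of $\mu$ --- the extra cell in the first row and the lone cell in row $a+1$, which make $\mu$ deviate from the pure staircase $(a,a-1,\dots,1)$ --- should produce an additional symmetric-polynomial factor in the integrand; this insertion is the source of the rational correction $\dfrac{a^2((n^2+4an+2a^2)^2-a^2)}{4a^2-1}$.

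With the integral so identified, I would invoke the closed-form evaluation of the $q$-Selberg integral together with its Aomoto/Kadell-type extension carrying the polynomial insertion. The pure Selberg factor yields a ratio of products of $q$-factorials that, upon the limit $q\to 1$ combined with the $(1-q)^{|\lm|}$ normalization, collapses to the superfactorial ratio $\PPhi(a)^4\PPhi(n)\PPhi(n+4a)/(\PPhi(2a)^2\PPhi(2n+4a))$, while the insertion evaluates in the limit to the stated rational factor. The remaining work is bookkeeping with the $\PPhi$ function and cancellation of the shared $q$-factorial factors.

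The main obstacle will be the second step: establishing the precise correspondence between the excited diagrams of this particular shape and the lattice points of the $q$-integration domain, and pinning down the exact polynomial insertion that accounts for $\mu$'s deviation from a staircase. Fixing the parameters $\alpha,\beta$, verifying that the cross-hook factors genuinely assemble into a squared $q$-Vandermonde, and checking that the $q\to 1$ limit of the $q$-Selberg evaluation is uniform enough to commute with the normalization constitute the technical core; once the integral is correctly set up, the reduction to the conjectured product should be routine.
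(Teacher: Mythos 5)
Your overall instinct---deform to a $q$-generating function, recognize a Selberg-type structure, and recover $f^{\lm}$ from the $q\to1$ limit---is the same instinct that drives the paper, and your normalization $f^{\lm}/|\lm|! = \lim_{q\to1}(1-q)^{|\lm|}s_{\lm}(1,q,q^2,\dots)$ is correct. But the plan has a genuine gap at its central step. You propose to assemble the excited-diagram sum from \eqref{eq:MPP1} into an Askey--Habsieger--Kadell $q$-Selberg integral with a polynomial insertion and then \emph{evaluate that $q$-integral in closed form} before letting $q\to1$. No such closed form can exist here: the answer contains the irreducible factor $(n^2+4an+2a^2)^2-a^2$, so the normalized $q$-series cannot be a product of cyclotomic-type factors, and the paper says explicitly that the relevant $q$-integral ``has large irreducible factors'' and is hopeless to evaluate. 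This is also why the statement remained a conjecture in [MPP3] even though those authors had \eqref{eq:MPP1} and the excited-diagram machinery in hand. The insertion you would need is not an Aomoto/Kadell-type monomial insertion; at the $q$-level it is a difference of products of $q$-shifted factorials, and only after taking $q\to1$ does it collapse to something tractable, namely $\bigl(\sum_j \tfrac{x_j}{1-x_j}+(n+b)\bigr)\bigl(\sum_j \tfrac{x_j}{1-x_j}+(n+d)\bigr)$, which one then expands by Pieri's rule into Schur functions and integrates with Warnaar's classical Selberg integral with Schur-function insertion. The order of operations---limit first, evaluation second---is not a technicality to be checked for ``uniformity''; it is the only route that works.

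Separately, your step of parametrizing $\mathcal{E}(\lm)$ for $\mu=(a+1,a^{a-1},1)$ and extracting a squared $q$-Vandermonde from the cross-hook factors is left entirely unestablished, and this is exactly where the difficulty of this shape lives ($\mu$ is neither a rectangle nor a staircase, and the two anomalous cells do not obviously translate into a clean insertion). The paper sidesteps excited diagrams altogether: it proves the more general Theorem~\ref{thm:con1} by cutting the skew shape along its main diagonal into two shifted pieces, applying the Kim--Stanton generating function for reverse plane partitions of shifted shape with fixed reverse diagonal (Theorem~\ref{thm:KS_8.7}) to each half, summing over the common diagonal via the $q$-integral lemma (Lemma~\ref{lem:gf}), and only then passing to $q\to1$. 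If you want to salvage your route, you would need to (i) actually carry out the excited-diagram parametrization for this $\mu$ and (ii) abandon the closed-form $q$-evaluation in favor of a $q\to1$ limit followed by a classical Selberg evaluation with Schur insertions; as written, the proposal does not constitute a proof.
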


In Section~\ref{sec:con1}, we prove the following theorem, which is a generalization of Conjecture~\ref{conj:MPP3-1}.

\begin{thm}\label{thm:con1}
Let $\lambda = ((n+c+d)^{n+a},(n+c)^{d})$, $\mu=(c +1, c ^{a -1},1)$ and $\rho=\lm$.
Then
\begin{multline*}
f^\rho = |\rho|! \frac{\PPhi(n)\PPhi(a)\PPhi(b)\PPhi(c)\PPhi(d)\PPhi(n+a+b)\PPhi(n+c+d)\PPhi(n+a+b+c+d)}{\PPhi(a+b)\PPhi(c+d)\PPhi(n+a+c)\PPhi(n+b+d)\PPhi(2n+a+b+c+d)}\\
\times \frac{a b \big( n(n+a +b +c +d )(n(n+a+b+c+d)+(a+b)(c+d))+cd (a+b-1)(a +b +1) \big)}{(a +b -1)(a +b +1)}.
\end{multline*}
\end{thm}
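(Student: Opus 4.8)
The plan is to reduce the computation of $f^\rho$ to the evaluation of a Selberg‑type integral. The first step is to invoke Naruse's formula \eqref{eq:naruse}, which gives $f^\rho = |\rho|!\sum_{D\in\mathcal{E}(\rho)}\prod_{(i,j)\in\lambda\setminus D} h_\lambda(i,j)^{-1}$; equivalently, one may start from the $q$‑analogue \eqref{eq:MPP1}, multiply by $(1-q)^{|\rho|}$, and let $q\to1$ (using $|\lambda\setminus D|=|\lambda|-|\mu|=|\rho|$ since every excited diagram has size $|\mu|$). Either way it suffices to evaluate the excited‑diagram sum $\sum_{D}\prod_{(i,j)\in\lambda\setminus D} h_\lambda(i,j)^{-1}$ in closed form for this specific five‑parameter family.

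Next I would encode the excited diagrams combinatorially. Because $\mu$ is housed inside the thick rectangular blocks of $\lambda$, the set $\mathcal{E}(\rho)$ is in bijection with families of non‑intersecting lattice paths, hence with integer tuples $\mathbf{x}=(x_1,\dots,x_m)$ ranging over a box whose dimensions are read off from the side lengths $n,a,b,c,d$ of the shape. Under this bijection the hook‑length weight $\prod h_\lambda^{-1}$ turns into a product of reciprocals of linear forms in the $x_i$, so the Naruse sum becomes a multidimensional sum of Selberg type, with a squared Vandermonde (i.e.\ exponent $\gamma=1$, the enumerative case) and with the parameters $\alpha,\beta$ and the number of variables dictated by $n,a,b,c,d$. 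This matches the symmetry of the target formula under $a\leftrightarrow b$ and $c\leftrightarrow d$. Passing to the continuous limit (or recognizing the associated Lindström--Gessel--Viennot determinant of binomials as an Anderson/Selberg integral) expresses the sum as a genuine Selberg integral with an extra polynomial factor coming from the ``defects'' of $\mu$, namely the entry $c+1$ and the trailing part $1$.

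The evaluation then splits into two parts. The bulk is the $\gamma=1$ Selberg integral, whose value is a ratio of products of Gamma factors; rewriting these through $\PPhi(n)=\prod_{i=1}^{n-1}i!$ and its recursion $\PPhi(n+1)=n!\,\PPhi(n)$ yields precisely the displayed product of $\PPhi$'s. The defect polynomial is a low‑degree symmetric function whose Selberg average I would compute by an Aomoto‑type recurrence; this is exactly what produces the rational correction factor, whose denominator $(a+b-1)(a+b+1)=(a+b)^2-1$ and quadratic‑in‑$(n,a,b,c,d)$ numerator are the hallmark of such an average.

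The hard part will be establishing the integral representation itself: proving the master lemma that identifies the excited‑diagram sum of this family with a Selberg integral, with all five parameters matched correctly and the extra polynomial factor pinned down precisely. Once that representation is in hand, the remainder is a careful but essentially mechanical application of Selberg's evaluation together with the Aomoto average, followed by bookkeeping with the $\PPhi$ recursion. As a consistency check I would specialize $a=b=c=d$, recovering $\lambda=((n+2a)^{n+a},(n+a)^a)$ and $\mu=(a+1,a^{a-1},1)$, and verify that both the $\PPhi$ product and the correction factor collapse to the expression in Conjecture~\ref{conj:MPP3-1}.
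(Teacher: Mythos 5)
Your outline and the paper's proof diverge at the very first step, and the step you yourself flag as ``the hard part'' is exactly where the gap lies. You propose to start from Naruse's formula \eqref{eq:naruse} (or its $q$-analogue \eqref{eq:MPP1}) and to evaluate the excited-diagram sum for this $\lambda/\mu$, asserting that $\mathcal{E}(\lambda/\mu)$ is in bijection with tuples ranging over a box and that the hook weight becomes a Selberg-type integrand with ``an extra polynomial factor coming from the defects of $\mu$.'' None of this is established, and it is not a routine verification: for $\mu=(c+1,c^{a-1},1)$ the two defect cells interact with the excitations of the $c^a$ block, so the excited diagrams are not parametrized by a product of intervals, and no explicit weight, no determinant, and no matching of the five parameters to $(\alpha,\beta,\gamma,n)$ is given. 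This is precisely the obstruction that left the formula as a conjecture in \cite{MPP3}, where the excited-diagram machinery was already the working tool; so the proposal, as written, reduces the theorem to an unproved ``master lemma'' that carries essentially all of the difficulty. The downstream steps (Selberg evaluation, an Aomoto-type average for the correction polynomial, the $\PPhi$ bookkeeping, the $a=b=c=d$ consistency check) are plausible but cannot be assessed until the integral representation exists.

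For contrast, the paper never touches excited diagrams. It splits $\rho$ along the main diagonal into two shifted pieces $\rho^r=\kappa(a,b,n)$ and $\rho^l=\kappa(c,d,n)'$, computes the RPP generating function of each piece with prescribed reverse diagonal via Theorem~\ref{thm:KS_8.7} (filling the removed staircase with a constant $t$ and enforcing $\min(T)=0$ by the difference $f(\cdot,0)-f(\cdot,1)$, which is the actual source of your ``defect polynomial''), glues the halves by summing over the common diagonal $\mu\in\Par_n$, converts that sum into a $q$-integral by Lemma~\ref{lem:gf}, and passes to $q\to1$ via Lemma~\ref{lem:SYT_RPP}. The limit is a sum of four Selberg integrals with Schur-function insertions $s_{(2^{n-2},1,1)}$, $s_{(2^{n-1})}$, $s_{(1^{n-1})}$ and $1$, each evaluated by Warnaar's formula. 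If you want to salvage your route, you would need to supply the explicit non-intersecting-path parametrization of $\mathcal{E}(\lambda/\mu)$ for this $\mu$ and prove the resulting sum equals a Selberg average; as it stands, the argument is a program rather than a proof.
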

See Figure~\ref{fig:rho} for the Young diagram of the skew shape $\rho$ used in Theorem~\ref{thm:con1}. Note that Conjecture~\ref{conj:MPP3-1} is obtained as a special case $a=b=c=d$ of Theorem~\ref{thm:con1}. Our proof of Theorem~\ref{thm:con1} consists of several steps. First, we consider the generating function for reverse plane partitions of shape $\rho$ and interpret it as a $q$-integral. Although it seems hopeless to evaluate the resulting $q$-integral because it has large irreducible factors, the  $q\to1$ limit becomes a Selberg-type integral which has a product formula. Using the well known connection between linear extensions and $P$-partitions of a poset, we obtain a product formula for $f^\rho$, which is then shown to be equivalent to the formula in Theorem~\ref{thm:con1}. 

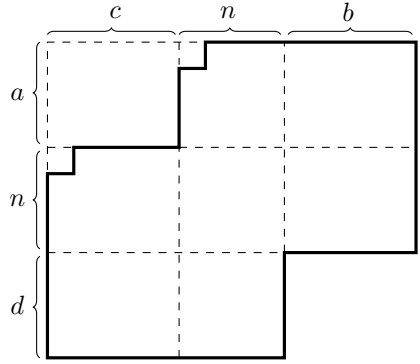
\begin{figure} 
\begin{tikzpicture}
\draw[very thick] (0,0)--(0,2.45)--(.35,2.45)--(.35,2.8)--(1.75,2.8)--(1.75,3.85)--(2.1,3.85)--(2.1,3.85)--(2.1,4.2)--(4.9,4.2)--(4.9,1.4)--(3.15,1.4)--(3.15,0)--(0,0)--cycle;
\draw[dashed] (0,1.4)--(3.15,1.4);
\draw[dashed] (1.75,2.8)--(4.9,2.8);
\draw[dashed] (1.75,0)--(1.75,2.8);
\draw[dashed] (3.15,1.4)--(3.15,4.2);

\draw[dashed] (0,2.5)--(0,4.2)--(2.1,4.2);
\draw[dashed] (0,2.8)--(0.35,2.8);
\draw[dashed] (1.75,4.2)--(1.75,3.9);
\draw[snake=brace] (0, 4.3)--(1.7, 4.3);
\draw[snake=brace] (1.75, 4.3)--(3.1, 4.3);
\draw[snake=brace] (3.2,4.3)--(4.85,4.3);

\draw[snake=brace] (-.1,2.85)--(-.1, 4.2);
\draw[snake=brace] (-.1,1.45)--(-.1, 2.75);
\draw[snake=brace] (-.1,0)--(-.1, 1.35);
\node (a) at (-.4, 2.1) {$n$};
\node (b) at (.9, 4.6) {$c$};
\node (c) at (-.4, 3.5) {$a$};
\node (d) at (4, 4.6) {$b$};
\node (d) at (2.4, 4.6) {$n$};
\node (e) at (-.4, .7) {$d$};
\end{tikzpicture}
\caption{The Young diagram of the skew shape $\rho$ in Theorem~\ref{thm:con1}.}
\label{fig:rho}
\end{figure}

For integers $n,a,b\ge0$ and $m\ge1$, let $\VV(n,a,b,m)$ denote the shifted skew shape $\lambda/\mu$ for 
\begin{equation}
  \label{eq:lambda}
\lambda = ((n+a+b, n+a+b-1,\dots, b+1)+(m-1) \delta_{n+a})^\ast,\qquad
\mu = (\delta_{a+1})^\ast,
\end{equation}
where $\nu^*$ denotes the shifted Young diagram of a strict partition $\nu$. 
See Figure~\ref{fig:pi} for the Young diagram of $\VV(n,a,b,m)$.
Morales, Pak and Panova \cite{MPP3}  also conjectured the following product formula.

\begin{figure}
\begin{tikzpicture}
\draw[very thick] (0, 1.5)--(0, 3)--(8.5,3)--(8.5,2.5)--(7.5,2.5)--(7.5,2)--(6.5,2)--(6.5,1.5)--(5.5,1.5)--(5.5,1)--(4.5,1)--(4.5,.5)--(3.5,.5)--(3.5,0)--(1.5,0)--(1.5,.5)--(1,.5)--(1,1)--(.5,1)--(.5,1.5)--(0,1.5)--cycle;
\draw (0, 2)--(2,2);
\draw (2,0)--(2,3);
\draw (3.5, 0)--(3.5, 3);
\draw[snake=brace] (0, 3.15)--(1.95,3.15);
\draw[snake=brace] (2.05, 3.15)--(3.5,3.15);
\draw[snake=brace] (-.15, 2.05)--(-.15, 2.95);
\draw[snake=brace] (-.15, 0.05)--(-.15, 1.95);
\node (q) at (-.45, 1) {$n$};
\node (r) at (-.45, 2.5) {$a$};
\node (n) at (1, 3.45) {$n$};
\node (s) at (2.8, 3.45) {$b$};
\node (d) at (5, 2.2) {$(m-1)\delta_{n+a}$};
\end{tikzpicture}
\caption{The Young diagram of $\VV(n,a,b,m)$.}\label{fig:pi}
\end{figure}
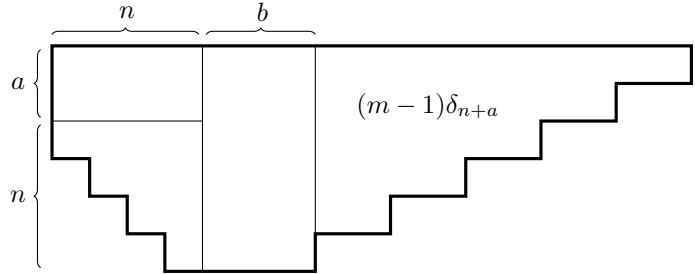

\begin{conj}\cite[Conjecture~9.6]{MPP3}
\label{conj:MPP3-2}
For $\pi=\VV(n,a,b,m)$, the number $g^\pi$ of standard Young tableaux of shape $\pi$ is 
\begin{equation*}
g^\pi = \frac{|\pi|!}{2^a} \cdot \frac{\PPhi(n+2a)\PPhi(a)}{\PPhi(2a)\PPhi(n+a)} \cdot \frac{\gimel(2a)\gimel(n)}{\gimel(n+2a)}
\prod_{(i,j)\in \lambda\backslash D}\frac{1}{h_{\lambda^*} (i,j)},
\end{equation*}
where $\gimel (n) = \prod_{i=1}^{\flr{n/2}} (n-2i)!$, $\lambda$ is given in \eqref{eq:lambda},
$D$ is the set of cells $(i,n+j)$ with $1\le i\le j\le n$ and $h_{\lambda^*} (i,j)$ is the shifted hook length.
\end{conj}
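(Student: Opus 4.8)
The plan is to follow the strategy of our proof of Theorem~\ref{thm:con1}, now carried out in the shifted setting. Let $P_\pi$ be the cell poset of the shifted skew shape $\pi=\VV(n,a,b,m)$, so that the shifted reverse plane partitions of shape $\pi$ are exactly the $P_\pi$-partitions and the number $g^\pi$ of standard Young tableaux of shape $\pi$ equals the number of linear extensions $e(P_\pi)$. By the classical correspondence between linear extensions and $P$-partitions,
\[
\sum_{T\in\RPP(\pi)}q^{|T|}=\frac{W_{P_\pi}(q)}{(q;q)_{|\pi|}},\qquad W_{P_\pi}(1)=e(P_\pi)=g^\pi,
\]
so that $(1-q)^{|\pi|}\sum_{T}q^{|T|}\to g^\pi/|\pi|!$ as $q\to1^-$. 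Hence it suffices to compute the leading singularity of the shifted reverse plane partition generating function as $q\to1$ and to check that $|\pi|!$ times this limit equals the claimed product.

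The heart of the argument is to write $\sum_{T}q^{|T|}$ as a $q$-integral. The thick part of $\pi$ is governed by the staircase $(m-1)\delta_{n+a}$, so the order-preserving conditions along consecutive diagonals telescope, and introducing one $q$-variable per diagonal converts the generating function into a multivariate $q$-integral over a $q$-simplex. The essential difference from the straight case of Theorem~\ref{thm:con1} is that the shifted geometry forces the variables indexed by the diagonal cells to enter symmetrically: instead of a type $A$ Vandermonde $\prod_{i<j}(x_i-x_j)$, one obtains a factor of the form $\prod_{i<j}(x_i-x_j)(x_i+x_j)=\prod_{i<j}(x_i^2-x_j^2)$ together with single-variable weights of the type $x_i^{\alpha}(qx_i;q)_{\beta}$. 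This doubling is precisely what produces the factor $2^a$ and the $\gimel$-factors in the conjectured formula.

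Taking $q\to1$, the $q$-integral degenerates to a Selberg-type integral of type $BC$, of the schematic shape $\int_{[0,1]^{k}}\prod_i x_i^{\alpha-1}(1-x_i)^{\beta-1}\prod_{i<j}|x_i^2-x_j^2|^{2\gamma}\,dx$, with one variable for each diagonal of $\pi$ and with parameters $\alpha,\beta,\gamma$ read off from $n,a,b,m$. Such integrals admit closed-form product evaluations, and extracting the leading coefficient yields $g^\pi$ as an explicit product of factorials. It then remains to transform this factorial product into the right-hand side of the conjecture: one computes separately the hook-length product $\prod_{(i,j)\in\lambda^{*}\setminus D}1/h_{\lambda^{*}}(i,j)$ for the specified excited diagram $D=\{(i,n+j):1\le i\le j\le n\}$, rewrites the remaining factors in terms of $\PPhi$ and $\gimel$, and matches the two sides.

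I expect the main obstacle to lie in the middle steps: correctly setting up the $q$-integral for the shifted shape and identifying the precise type $BC$ Selberg integral, since the symmetrization along the diagonal and the exact values of $\alpha,\beta,\gamma$ (here $\gamma$ is a half-integer) must be pinned down exactly, and an error there propagates into every factor of the answer. Unlike the straight case, where one can cross-check against familiar type $A$ Selberg evaluations and against Theorem~\ref{thm:con1}, the half-integer type $BC$ evaluation is more delicate and offers fewer independent consistency checks. The concluding factorial-to-$\PPhi$/$\gimel$ conversion together with the explicit hook-length computation is lengthy but routine.
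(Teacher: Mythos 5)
Your overall strategy --- $P$-partition reciprocity, a $q$-integral representation of the RPP/SSYT generating function, a $q\to 1$ degeneration to a Selberg-type integral, and a final product-matching step --- is exactly the paper's, but two of your central technical claims do not hold up, and together they constitute a genuine gap. First, the $q$-integral is not obtained by ``telescoping along consecutive diagonals'': the paper's mechanism is to fix the reverse diagonal $\rdiag(T)=(\mu_1+n-1,\dots,\mu_n,a-1,\dots,1,0)$ (the last $a$ entries being forced by filling the removed staircase $(\delta_{a+1})^*$), apply the Kim--Stanton alternant formula (Theorem~\ref{thm:KS_8.7}/Corollary~\ref{cor:gfs}) to get $\overline{a}_{(b^{n+a})+m\delta_{n+a}}(q^\nu)$, and then convert the sum over $\mu\in\Par_n$ into an $n$-dimensional $q$-integral via Lemma~\ref{lem:gf}, followed by the substitution $x_i^m\mapsto x_i$ of Lemma~\ref{lem:cov} to absorb the dilation by $m$. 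Without this input you have no way to produce the integrand, and your sketch gives no substitute for it.

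Second, and more seriously, you misidentify the limiting integral. The shifted geometry does \emph{not} symmetrize the diagonal variables into a type $BC$ kernel $\prod_{i<j}(x_i^2-x_j^2)$. What actually arises (after the change of variables and the $q\to1$ limit) is
\begin{equation*}
\int_{0\le x_1\le\cdots\le x_n\le 1}\prod_{i=1}^n x_i^{\frac{b+1}{m}-1}(1-x_i)^{a}\,\overline{\Delta}(x_1,\dots,x_n)\,dx_1\cdots dx_n,
\end{equation*}
i.e.\ the ordinary type $A$ Selberg integral with the Vandermonde to the \emph{first} power, which is the Selberg density $|\Delta(x)|^{2\gamma}$ at $\gamma=\tfrac12$. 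The factors $2^a$, $\gimel(2a)\gimel(n)/\gimel(n+2a)$ in the conjectured formula come from evaluating the Selberg product of Gamma functions at half-integer arguments for this $\gamma=\tfrac12$, not from any $\prod_{i<j}(x_i+x_j)$ doubling. A $BC$-type integral with $\prod|x_i^2-x_j^2|^{2\gamma}$ has a different evaluation, so if you pursued your setup literally you would not recover the conjectured product. The correct half-integer datum to pin down is the exponent of the Vandermonde, not a $BC$ parameter.
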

In Section~\ref{sec:conj2} we prove Conjecture~\ref{conj:MPP3-2}  by a similar approach used in the proof of Theorem~\ref{thm:con1}. 

For integers $n, a,b,c,d\ge0$ and $m\ge1$, we let $\MM(n, a,b,c,d, m)$ denote the skew shape $\lambda/(c^a)$, where  $\lambda=((n+c+b)^{n+a})+(m-1)\delta_{n+a})\cup \nu '$ and $\nu=(d^{n+c})+(m-1)\delta_{n+c}$, see Figure~\ref{fig:biglambda}. 

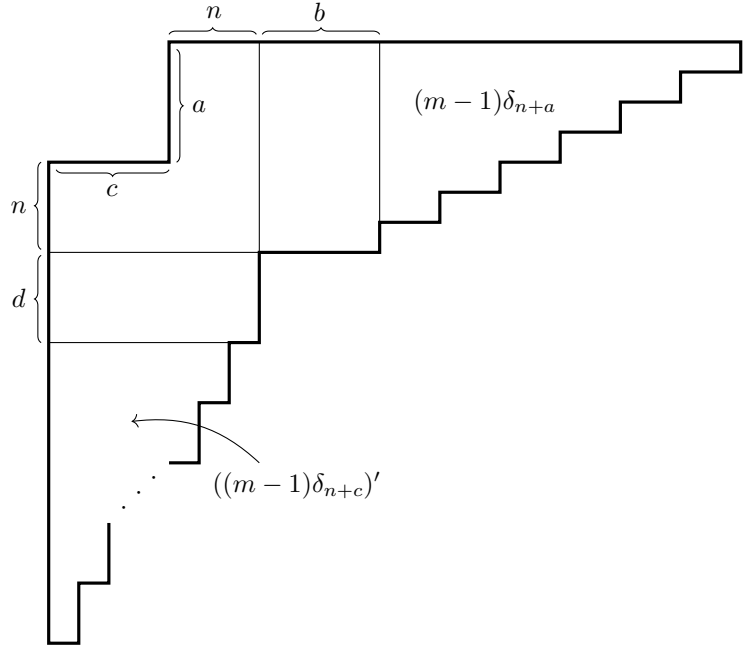
\begin{figure}
\begin{tikzpicture}
\draw[very thick] (.8,1.6)--(.8,.8)--(.4,.8)--(.4,0)--(0,0)--(0,6.4)--(1.6,6.4)--(1.6,8)--(9.2,8)--(9.2,7.6)--(8.4,7.6)--(8.4,7.2)--(7.6,7.2)--(7.6,6.8)--(6.8,6.8)--(6.8,6.4)--(6,6.4)--(6,6)--(5.2,6)--(5.2,5.6)--(4.4,5.6)--(4.4,5.2)--(3.2,5.2)--(2.8,5.2)--(2.8,4)--(2.4,4)--(2.4,3.2)--(2,3.2)--(2,2.4)--(1.6,2.4);
\draw (0,4)--(2.8,4);
\draw (0,5.2)--(3.2,5.2);
\draw (2.8,5.2)--(2.8,8);
\draw (4.4,5.2)--(4.4,8);
\draw[snake=brace] (1.6,8.1)--(2.76,8.1);
\draw[snake=brace] (2.84, 8.1)--(4.4,8.1);
\draw[snake=brace] (1.7,7.9)--(1.7,6.4);
\draw[snake=brace] (1.6, 6.3)--(.1,6.3);
\draw[snake=brace] (-.1,5.24)--(-.1, 6.4);
\draw[snake=brace] (-.1,4)--(-.1, 5.16);
\node (a) at (2, 7.15) {$a$};
\node (c) at (.84, 6.02) {$c$};
\node (n1) at (2.2,8.4) {$n$};
\node (b) at (3.6,8.4) {$b$};
\node (n2) at (-.4,5.8) {$n$};
\node (d) at (-.4,4.6) {$d$};
\node (r) at (5.8, 7.2) {$(m-1)\delta_{n+a}$};
\node (d1) at (1,1.8) {$\cdot$};
\node (d2) at (1.2,2) {$\cdot$};
\node (d3) at (1.4,2.2) {$\cdot$};
\draw [->] (2.8, 2.4) to [bend right=25] (1.1, 2.95);
\node (l) at (3.3, 2.1) {$((m-1)\delta_{n+c})'$};
\end{tikzpicture}
\caption{The Young diagram of the skew shape $\MM(n,a,b,c,d, m)$.}\label{fig:biglambda}
\end{figure}

In Section~\ref{sec:GMBT} we show that
for $\pi=\MM(n,a,b,c,d,1)$, the generating function
$s_\pi(1,q,q^2,\dots,q^N)$ for SSYTs of shape $\pi$ with bounded entries also has a product formula. 

\begin{thm}\label{thm:GMBT1}
For $\pi=\MM(n,a,b,c,d,1)$ and an integer $N\ge0$, we have 
\begin{multline*}
  s_\pi(1,q,q^2,\dots,q^N) =q^{\sum_{(i,j)\in \lambda/(c^a)}(\lambda_j ' -i)}
 \prod_{i=1}^{b}(q^{N-a +1+i};q)_{a} \prod_{i=1}^{d} (q^{N+2-i};q)_{c}\\
\times \prod_{i=1}^n (q^{N-n-a-d+1+i};q)_{n+a+b+c+d} \prod_{i=1}^n \prod_{j=1}^{a}\prod_{k=1}^{c}\frac{1-q^{i+j+k-1}}{1-q^{i+j+k-2}}
\cdot \prod_{(i,j)\in \lambda\setminus(0^n,c^a)}\frac{1}{1-q^{h_\lambda (i,j)}},
\end{multline*}
where $(0^n,c^a)$ is the set of cells in the $a\times c$ rectangle, starting from the $(n+1)$-st row. (In other words, $(0^n,c^a) = (c^{n+a})/(c^n)$.)
\end{thm}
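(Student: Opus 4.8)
The plan is to expand $s_\pi(1,q,\dots,q^N)$ over excited diagrams and then to exploit the fact that, for this particular shape, every excited diagram carries the \emph{same} rational weight, so that the whole sum collapses to a single $q$-power sum. Write $\pi=\lambda/\mu$ with $\mu=(c^a)$. First I would set up a bounded refinement of~\eqref{eq:MPP1}, expressing $s_\pi(1,q,\dots,q^N)$ as a sum over $D\in\mathcal{E}(\lambda/\mu)$ in which each cell $(i,j)$ of $\lambda\setminus D$ contributes a $q$-power $q^{\lambda_j'-i}$, a denominator $1-q^{h_\lambda(i,j)}$, and a numerator of the form $1-q^{N+1+(j-i)}$ coming from the bound $N$; such a flagged version of the excited-diagram expansion is available by the methods of Morales, Pak and Panova. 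The least excited diagram $D_0=\mu$ attains the minimal total $q$-power $P_{\min}=\sum_{(i,j)\in\lambda/(c^a)}(\lambda_j'-i)$, which is exactly the exponent of the global prefactor in the statement.

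The crucial observation is that an excitation move $(i,j)\mapsto(i+1,j+1)$ preserves the content $j-i$ and, because the region of $\lambda$ swept out by the excitations is rectangular, also preserves the hook length $h_\lambda(i,j)$. Since the numerator of a cell depends only on its content and the denominator only on its hook length, both products $\prod_{(i,j)\in\lambda\setminus D}\bigl(1-q^{N+1+(j-i)}\bigr)$ and $\prod_{(i,j)\in\lambda\setminus D}\bigl(1-q^{h_\lambda(i,j)}\bigr)^{-1}$ are independent of $D$. Evaluating both at the distinguished, fully excited diagram $(0^n,c^a)$ yields, respectively, the three $q$-Pochhammer products of the statement---grouped according to the $a\times b$ block, the $c\times d$ block, and the full-width central strip of $n$ rows, a grouping whose factor count is consistent since $|\pi|=n(n+a+b+c+d)+ab+cd$---and the factor $\prod_{(i,j)\in\lambda\setminus(0^n,c^a)}\bigl(1-q^{h_\lambda(i,j)}\bigr)^{-1}$. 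After removing these two common factors, what remains is the pure $q$-power sum $\sum_{D\in\mathcal{E}(\lambda/\mu)}q^{P(D)}=q^{P_{\min}}\sum_{D}q^{P(D)-P_{\min}}$, which I would identify with $q^{P_{\min}}$ times the MacMahon box generating function $\prod_{i=1}^n\prod_{j=1}^a\prod_{k=1}^c\frac{1-q^{i+j+k-1}}{1-q^{i+j+k-2}}$ via a weight-preserving bijection between $\mathcal{E}(\lambda/\mu)$ and plane partitions in the $n\times a\times c$ box.

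The main obstacle lies in the geometric input of the second step. I must prove that every diagonal excitation stays inside a rectangular portion of $\lambda$, so that $h_\lambda(i,j)=h_\lambda(i+1,j+1)$ at each move; this requires a careful analysis of the several regions of $\lambda$ visible in Figure~\ref{fig:biglambda}, and it is here that the hypothesis $m=1$ is essential, since the staircase perturbations present for $m>1$ would destroy the constancy of the hook lengths. The second delicate point is to make the bijection between $\mathcal{E}(\lambda/\mu)$ and plane partitions in the $n\times a\times c$ box explicit and to check that it transports $P(D)-P_{\min}$ to the size statistic graded by MacMahon's theorem; the admissible downward shifts of the inner rectangle $(c^a)$ are controlled by the central strip of width $n$, which is what produces the third box dimension. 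I would then invoke MacMahon's box theorem, in the trace-refined form used by Morales, Pak and Panova, to evaluate the power sum. As consistency checks I would recompute the prefactor directly from the minimal filling of $\pi$ and verify the whole identity in the degenerate cases $n=0$, where the box factor trivializes and $\lambda$ breaks into rectangles, and $b=d=0$.
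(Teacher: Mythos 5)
Your route is genuinely different from the paper's: the paper never touches excited diagrams for this theorem, but instead cuts $\pi$ along the main diagonal, encodes the bounds $0\le T\le N$ by gluing staircases with frozen entries onto the two halves, applies the Kim--Stanton generating function for shifted RPPs with fixed diagonal (Theorem~\ref{thm:KS_8.7} / Corollary~\ref{cor:gfs}), converts the sum over diagonals into a $q$-integral via Lemma~\ref{lem:gf}, and evaluates the resulting Askey $q$-Selberg integral \eqref{eqn:qSelbergint}. Unfortunately, as written your argument does not work, because its central mechanism is false. An excitation move $(i,j)\mapsto(i+1,j+1)$ preserves the content $j-i$ but \emph{never} preserves the hook length: from $h_\lambda(i,j)=\lambda_i+\lambda_j'-i-j+1$ one gets
\begin{equation*}
h_\lambda(i,j)-h_\lambda(i+1,j+1)=(\lambda_i-\lambda_{i+1})+(\lambda_j'-\lambda_{j+1}')+2\ \ge\ 2,
\end{equation*}
even inside a rectangle. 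Consequently the multiset $\{h_\lambda(i,j):(i,j)\in\lambda\setminus D\}$ depends on $D$, and the terms of the excited-diagram sum are \emph{not} all equal. Already for $\MM(1,1,0,1,0,1)$, i.e.\ $\lambda=(2,2)$, $\mu=(1)$, the two excited diagrams $\{(1,1)\}$ and $\{(2,2)\}$ give complements with hook multisets $\{2,2,1\}$ and $\{3,2,2\}$, and the two summands of \eqref{eq:MPP1} are $q/\bigl((1-q^2)^2(1-q)\bigr)$ and $q^2/\bigl((1-q^3)(1-q^2)^2\bigr)$; their sum factors nicely only after a genuine summation (this is exactly the content of Theorem~\ref{thm:MPP3}), not termwise. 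So the step ``remove two common factors and be left with a pure $q$-power sum'' has no common factors to remove.

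A second, independent gap is the bounded expansion you start from. The formula with cell-wise numerator $1-q^{N+1+(j-i)}$ attached to each $(i,j)\in\lambda\setminus D$ is not a theorem for skew shapes: for $(2,1)/(1)$ the unique excited diagram would give $(1-q^{N+2})(1-q^N)/(1-q)^2$, whereas $s_{(2,1)/(1)}(1,q,\dots,q^N)=\bigl((1-q^{N+1})/(1-q)\bigr)^2$. The flagged excited-diagram formulas of Morales--Pak--Panova carry numerators that depend on the positions of the excited cells of $D$, not only on the contents of $\lambda\setminus D$, so this input would itself require proof even for the particular shape $\MM(n,a,b,c,d,1)$. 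If you want to pursue an excited-diagram proof, the realistic version is to follow the proof of Theorem~\ref{thm:MPP3}: identify $\mathcal{E}(\lambda/(c^a))$ with plane partitions in the $n\times a\times c$ box, write each summand relative to the maximal excited diagram $(0^n,c^a)$, and prove the resulting weighted sum over the box equals the MacMahon product --- and then still supply a separate argument for the three Pochhammer factors coming from the bound $N$. The paper's $q$-integral route obtains all of this in one stroke.
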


Recall that MacMahon's box theorem states that
\begin{equation}
  \label{eq:MBT}
\sum_{\substack{T\in\RPP(b^a)\\\max(T)\le c}} q^{|T|} = 
\prod_{i=1}^a \prod_{j=1}^{b}\prod_{k=1}^{c}\frac{1-q^{i+j+k-1}}{1-q^{i+j+k-2}}.
\end{equation}
By the simple connection between SSYTs and RPPs of any partition shape,
\eqref{eq:MBT} is equivalent to 
\begin{equation}
  \label{eq:MBT2}
\sum_{\substack{T\in\SSYT(b^a)\\\max(T)\le c+a-1}} q^{|T|} = q^{b\binom a2}
\prod_{i=1}^a \prod_{j=1}^{b}\prod_{k=1}^{c}\frac{1-q^{i+j+k-1}}{1-q^{i+j+k-2}}.
\end{equation}
Since the shape $\MM(n,a,b,c,d,1)$ is more general than $(b^a)$, Theorem~\ref{thm:GMBT1} can be considered as a generalization of MacMahon's box theorem. 

Using \eqref{eq:MPP1}, Morales, Pak and Panova \cite{MPP3} found a product formula for the generating function for semistandard Young tableaux of shape $\MM(n, a,b,c,d, m)$. 

\begin{thm} \cite[Theorem~4.2]{MPP3}
Let $\lm=\MM(n, a,b,c,d, m)$. Then
\label{thm:MPP3}
\[
\sum_{T\in\SSYT(\lm)}q^{|T|}  = q^{\sum_{(i,j)\in \lambda/(c^{a})} (\lambda_j '-i)}
 \prod_{i=1}^n \prod_{j=1}^{a}\prod_{k=1}^{c}\frac{1-q^{m(i+j+k-1)}}{1-q^{m(i+j+k-2)}}
\prod_{(i,j)\in \lambda\setminus(0^n, c^{a})}\frac{1}{1-q^{h_\lambda(i,j)}}.
\]
\end{thm}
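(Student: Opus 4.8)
The plan is to derive the formula directly from the $q$-analogue \eqref{eq:MPP1} of Naruse's hook length formula, by showing that for the specific shape $\lm=\MM(n,a,b,c,d,m)$ the sum over excited diagrams collapses into the stated product. First I would rewrite the right-hand side of \eqref{eq:MPP1} by pulling out the full principal hook product: since the complement $\lambda\setminus D$ sits in the denominator, one has
\[
\sum_{T\in\SSYT(\lm)}q^{|T|}=\left(\prod_{(i,j)\in\lambda}\frac{1}{1-q^{h_\lambda(i,j)}}\right)\sum_{D\in\mathcal{E}(\lm)}q^{\sum_{(i,j)\in\lambda\setminus D}(\lambda_j'-i)}\prod_{(i,j)\in D}\left(1-q^{h_\lambda(i,j)}\right).
\]
So the problem reduces to evaluating the \emph{dual} excited-diagram sum $S:=\sum_{D\in\mathcal{E}(\lm)}q^{\sum_{(i,j)\in\lambda\setminus D}(\lambda_j'-i)}\prod_{(i,j)\in D}(1-q^{h_\lambda(i,j)})$ and showing that it factors as the overall $q$-power times the triple box product times the hook factors $\prod_{(i,j)\in(0^n,c^a)}(1-q^{h_\lambda(i,j)})$ of the extreme excited diagram $(0^n,c^a)=(c^{n+a})/(c^n)$; the latter then recombine with the pulled-out full hook product to leave exactly the hook product over $\lambda\setminus(0^n,c^a)$ appearing in the statement.

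The central combinatorial step is to understand $\mathcal{E}(\lm)$ when the inner shape is the rectangle $\mu=(c^a)$. Here I would use the description of excited diagrams via excited moves: each cell of $\mu$ is pushed along its diagonal, and the non-crossing constraint built into the moves means an excited diagram is recorded by an array of nonnegative shift amounts, one per cell of the rectangle, subject to a weak monotonicity condition. The staircase pieces $(m-1)\delta_{n+a}$ and $((m-1)\delta_{n+c})'$ of $\lambda$ control exactly how far each diagonal may be pushed, and the resulting parameter space is precisely the set of plane partitions fitting inside an $n\times a\times c$ box (equivalently, bounded reverse plane partitions). Thus I would set up an explicit bijection between $\mathcal{E}(\lm)$ and this box, under which the exponent $\sum_{(i,j)\in\lambda\setminus D}(\lambda_j'-i)$ becomes the fixed ground-state exponent plus a linear function of the box, and $\prod_{(i,j)\in D}(1-q^{h_\lambda(i,j)})$ becomes a product governed by the hook lengths encountered along the diagonals.

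The key arithmetic input is that along each relevant diagonal of $\lambda$ the hook lengths form an arithmetic progression with common difference $m$, because each copy of $(m-1)\delta$ stretches successive hooks by $m$. Consequently, after the bijection the sum $S$ reduces, up to the overall power $q^{\sum_{(i,j)\in\lambda/(c^a)}(\lambda_j'-i)}$ and the rectangle factors noted above, to MacMahon's box generating function with $q$ replaced by $q^m$, namely
\[
\prod_{i=1}^{n}\prod_{j=1}^{a}\prod_{k=1}^{c}\frac{1-q^{m(i+j+k-1)}}{1-q^{m(i+j+k-2)}}.
\]
This is the point where \eqref{eq:MBT} is invoked, first at $m=1$ (recovering Theorem~\ref{thm:GMBT1}) and then in its diagonally-stretched form for general $m$. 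Assembling the three factors yields the claimed formula.

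I expect the main obstacle to lie in the bookkeeping of the two middle steps: verifying that the excited moves for this particular $\lambda$ produce exactly the full $n\times a\times c$ box of shifts, neither more nor fewer, and confirming that the hook lengths $h_\lambda(i,j)$ at the excited cells take precisely the values $m(i+j+k-1)$ and $m(i+j+k-2)$ forced by the target. An alternative that sidesteps part of this is to encode $\mathcal{E}(\lm)$ as families of non-intersecting lattice paths and to evaluate the resulting Lindström--Gessel--Viennot determinant directly; but this merely trades the box bijection for a determinant evaluation that must still be shown to factor into the same product, so the essential difficulty, namely controlling the hook lengths created by the staircase, remains the crux either way.
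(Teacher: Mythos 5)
Your route is not the one this paper takes. Theorem~\ref{thm:MPP3} is not proved here at all: it is imported from \cite{MPP3}, and the paper's own machinery recovers it only as the $x=1$ specialization of Theorem~\ref{thm:trace}, which is proved in Section~\ref{sec:trace} by a completely different method --- the shape is cut along the main diagonal, each half is a shifted shape whose generating function with prescribed diagonal entries is supplied by Corollary~\ref{cor:gfs}, the sum over the common diagonal is converted into a $q$-integral by Lemma~\ref{lem:gf}, and the resulting integral is evaluated by the Askey--Habsieger--Kadell $q$-Selberg integral \eqref{eqn:qSelbergint}. Your excited-diagram argument via \eqref{eq:MPP1} is essentially the proof given in the cited source \cite{MPP3} itself. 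The trade-off is clear: the excited-diagram route is combinatorial and needs only \eqref{eq:MPP1} plus control of $\mathcal{E}(\lambda/\mu)$ for a rectangular inner shape, whereas the $q$-integral route yields the extra trace parameter $x$ for free (Theorem~\ref{thm:trace} is strictly stronger) and is uniform with every other computation in the paper.

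That said, as written your argument has a hole exactly at its center. After pulling out the full hook product, the quantity you must evaluate is
\[
S=\sum_{D\in\mathcal{E}(\lambda/\mu)}q^{\sum_{(i,j)\in\lambda\setminus D}(\lambda_j'-i)}\prod_{(i,j)\in D}\bigl(1-q^{h_\lambda(i,j)}\bigr),
\]
and its reduction to MacMahon's formula is \emph{not} an application of \eqref{eq:MBT}: the summand is weighted by the hook factors $\prod_{(i,j)\in D}(1-q^{h_\lambda(i,j)})$, not by the volume $q^{|T|}$ of the corresponding plane partition. So even once you establish the bijection between $\mathcal{E}(\lambda/\mu)$ and plane partitions in the $n\times a\times c$ box, you still owe a proof that this weighted sum telescopes into $\prod_{i,j,k}\frac{1-q^{m(i+j+k-1)}}{1-q^{m(i+j+k-2)}}$ times the hook factors of the extremal diagram $(0^n,c^a)$, together with the verification that the hooks met along each pushed diagonal are exactly $m(i+j+k-1)$ and $m(i+j+k-2)$ (this is where both staircase attachments enter). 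That factorization is the entire content of the theorem; you correctly locate the difficulty but do not supply the argument, so the proposal is a sound plan --- matching the known proof in \cite{MPP3} --- rather than a proof.
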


In Section~\ref{sec:trace} we show the following trace generating function formula, which is a generalization of Theorem~\ref{thm:MPP3}. 

\begin{thm}\label{thm:trace}
Let $\pi=\MM(n,a,b,c,d,m)$. Then
\begin{multline*}
\sum_{T\in\SSYT(\pi)} x^{\tr(T)}q^{|T|} =
x^{n a +\binom{n}{2}}q^{\sum_{(i,j)\in \lambda/(c^a)} (\lambda_j '-i)}\\
\times  \prod_{i=1}^n \prod_{j=1}^{a}\prod_{k=1}^{c}\frac{1-q^{m(i+j+k-1)}}{1-q^{m(i+j+k-2)}}
\prod_{(i,j)\in \lambda\setminus (0^n, c^a)}\frac{1}{1-x^{\chi(i,j)}q^{h_\lambda (i,j)}},
\end{multline*}
where 
$$
\chi(i,j)=\begin{cases} 1, &\text{ if } (i,j) \in (n+c)^{n+a},\\
0,& \text{ otherwise}.\end{cases}
$$
\end{thm}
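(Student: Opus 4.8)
The plan is to prove Theorem~\ref{thm:trace} as a refinement of Theorem~\ref{thm:MPP3}, tracking the extra statistic $\tr(T)=\sum_i T_{ii}$ recording the diagonal entries of the semistandard tableau. Since Theorem~\ref{thm:MPP3} is already established via the Naruse/Morales--Pak--Panova formula \eqref{eq:MPP1}, the natural strategy is to introduce a second variable $x$ into that same excited-diagram machinery. First I would set up the trace-weighted analog of \eqref{eq:MPP1}, namely a formula of the form $\sum_{T\in\SSYT(\pi)} x^{\tr(T)}q^{|T|} = \sum_{D\in\mathcal{E}(\pi)} \prod_{(i,j)\in\lambda\setminus D}(\text{weight})$, where each cell on the main diagonal carries an extra factor of $x$. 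The function $\chi(i,j)$ in the statement is precisely the indicator that a cell lies in the rectangular block $(n+c)^{n+a}$, which is exactly the region containing the diagonal cells that can contribute to the trace after the excited-diagram moves; so the appearance of $x^{\chi(i,j)}$ in the denominator $1-x^{\chi(i,j)}q^{h_\lambda(i,j)}$ is the $x$-refinement of the hook factor $1/(1-q^{h_\lambda(i,j)})$ for exactly those cells.

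The key steps, in order, would be: (i) recall or re-derive a trace-refined version of the Morales--Pak--Panova identity \eqref{eq:MPP1}, attaching $x$ to diagonal cells, and verify that for a skew shape the relevant diagonal contributions are governed by $\chi(i,j)$; (ii) identify the excited diagrams $\mathcal{E}(\MM(n,a,b,c,d,m))$ explicitly --- these should match the excited diagrams already computed in the proof of Theorem~\ref{thm:MPP3}, since the underlying shape is identical and $x$ does not change which cells may be excited; (iii) carry out the same summation over excited diagrams that produced Theorem~\ref{thm:MPP3}, but now keeping the $x$ factors. Because $x$ enters only through the diagonal band and through the overall power $x^{na+\binom n2}$ (the number of diagonal cells, $na$ from the rectangular overhang and $\binom n2$ from the staircase part), the combinatorial sum over $\mathcal{E}(\pi)$ should factor in the same way as before, with the product $\prod 1/(1-q^{h_\lambda})$ simply replaced by $\prod 1/(1-x^{\chi}q^{h_\lambda})$ for cells in $\lambda\setminus(0^n,c^a)$, while the rectangular MacMahon-type factor $\prod_{i,j,k}(1-q^{m(i+j+k-1)})/(1-q^{m(i+j+k-2)})$ is untouched since those cells carry no trace weight.

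Alternatively, and perhaps more cleanly, I would mirror the $q$-integral approach outlined for Theorem~\ref{thm:con1}: express $\sum_{T}x^{\tr(T)}q^{|T|}$ as a trace-weighted generating function for reverse plane partitions (or SSYTs) of shape $\pi$, realize it as a $q$-integral with an extra $x$-weight on the variables corresponding to diagonal cells, and evaluate. The advantage is that the trace statistic, under the standard RSK-type or Lindström--Gessel--Viennot lattice-path interpretation, corresponds to weighting paths by where they cross the main diagonal, which is a local modification that should propagate cleanly through a determinantal or integral evaluation.

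The main obstacle I expect is step (i): establishing the correct trace-refined form of \eqref{eq:MPP1} and, in particular, proving that the extra $x$ appears with exponent exactly $\chi(i,j)$ rather than some more complicated diagonal-distance statistic. One must check that after an excited-diagram move a cell contributes to the trace if and only if its original position lies in the block $(n+c)^{n+a}$, and that the bookkeeping of the overall prefactor $x^{na+\binom n2}$ is consistent across all excited diagrams --- i.e.\ that every $D\in\mathcal{E}(\pi)$ leaves the same total number of diagonal cells, so the prefactor can be pulled out of the sum. Verifying this invariance of the diagonal count under excited moves, and matching it against the flagged-SSYT or lattice-path picture, is where the real work lies; once it is in place, the remainder reduces to the already-known computation behind Theorem~\ref{thm:MPP3}.
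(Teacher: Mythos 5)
Your primary route does not match the paper's and, as written, rests on an unproved and doubtful key step. The paper does not touch excited diagrams at all: it sets $x=q^t$, splits $\pi$ along the main diagonal into $\pi^u=\VV(n,a,b,m)$ and $\pi^d=(\VV(n,c,d,m))'$, applies the fixed-diagonal generating functions of Corollary~\ref{cor:gfs} to each half (the trace weight enters simply as $q^{(t+1)|\mu+\delta_n|}$, because the diagonal entries are exactly the fixed entries $\mu+\delta_n$), sums over $\mu$ via Lemma~\ref{lem:gf} to obtain a $q$-integral whose integrand carries the extra factor $\prod_i x_i^t$, and evaluates that integral by the Askey--Habsieger--Kadell--Evans $q$-Selberg formula \eqref{eqn:qSelbergint} with $\mathsf{a}=0$. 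Your step (i) --- a trace-refined version of \eqref{eq:MPP1} in which each cell of $\lambda\setminus D$ acquires a factor $x^{\chi(i,j)}$ in its hook denominator --- is essentially the theorem you are trying to prove, restated at the level of a single excited diagram, and you give no argument for it. It is not a formal consequence of \eqref{eq:MPP1}: the trace $\sum_i T_{ii}$ is a statistic on the tableau, not on the excited diagram, and there is no a priori reason it should localize to a per-cell weight supported on the rectangle $(n+c)^{n+a}$ (note that this rectangle is much larger than the set of diagonal cells of $\pi$, so the claimed weight is not literally ``$x$ on the diagonal''). Likewise your reading of the prefactor exponent $na+\binom n2$ as ``the number of diagonal cells'' is off --- it is the minimum possible value of $\tr(T)$ over $\SSYT(\pi)$, which is what falls out of the integral evaluation. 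The invariance you would need (that every excited diagram contributes the diagonal weight in the same per-cell form, with a common prefactor) is exactly the missing content, as you yourself acknowledge.

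Your alternative paragraph --- realize the trace-weighted sum as a $q$-integral with an extra weight on the variables attached to the diagonal --- is the right idea and is in substance what the paper does, but it is left entirely unexecuted: one needs the substitution $x=q^t$, the diagonal split into two shifted halves handled by SSYT and RST fixed-diagonal generating functions respectively, the change of variables $x_i^m\mapsto x_i$ of Lemma~\ref{lem:cov}, and the identification of the resulting integral as the $\mathsf{a}=0$ case of \eqref{eqn:qSelbergint}. As it stands, the proposal contains the correct strategy only as an aside, while the route it actually develops has a genuine gap at its central step.
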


\section{Preliminaries}\label{sec:preliminaries}

The following notations will be used throughout this paper:
\[
(2n-1)!! = 1\cdot 3 \cdots (2n-1),\qquad (a;q)_n = (1-a)(1-aq)\cdots (1-aq^{n-1}),
\]
\[
\PPhi(n) = \prod_{i=1}^{n-1} i !,\qquad
\gimel (n) = \prod_{i=1}^{\flr{n/2}} (n-2i)!, 
\]
\[
\PPhi_q (n) = \prod_{i=1}^{n-1}(q;q)_i,\qquad
\gimel _q (n) = \prod_{i=1}^{\flr{n/2}}(q;q)_{n-2i}.
\]

A \emph{partition} is a sequence $\lambda=(\lambda_1,\dots,\lambda_k)$ of integers $\lambda_1\ge\lambda_2\ge\dots\ge\lambda_k\ge 0$. 
Each $\lambda_i>0$ is called a \emph{part} of $\lambda$. The \emph{length} $\ell(\lambda)$ of $\lambda$ is the number of parts in $\lambda$. 
We denote by $\Par_n$ the set of partitions with at most $n$ parts. We will use the convention that  $\lambda_i=0$ for all $i>\ell(\lambda)$. We define 
\[
\delta_n =  (n-1,n-2,\dots,1,0)\in\Par_n.
\]

For a partition $\lambda$, let
\[
\nn(\lambda)=\sum_{i=1}^{\ell(\lambda)} (i-1)\lambda_i.
\]
For $\lambda\in\Par_n$ and a sequence $x=(x_1,\dots,x_n)$ of variables, we define
\begin{align*}
{a}_\lambda(x) &= \det(x_{j}^{\lambda_i+n-i})_{i,j=1}^n,\\
\overline{a}_\lambda(x) &= \det(x_{n+1-j}^{\lambda_i+n-i})_{i,j=1}^n = (-1)^{\binom n2}{a}_\lambda(x),\\
\Delta(x) &= a_{\delta_n}(x)= \prod_{1\le i<j\le n} (x_i-x_j),\\
\overline{\Delta}(x) &= \overline{a}_{\delta_n}(x)= \prod_{1\le i<j\le n} (x_j-x_i).
\end{align*}

For a partition $\lambda$, if $m_i$ is the number of parts equal to $i$ in $\lambda$ for $i\ge1$, we also write $\lambda$ as $(M^{m_M},\dots,2^{m_2},1^{m_1})$,
where $M$ is an integer greater than or equal to the largest part of $\lambda$. 
For two partitions $\lambda,\mu\in\Par_n$, we define $\lambda+\mu$ to be the partition $\nu\in\Par_n$ given by $\nu_i=\lambda_i+\mu_i$. 
We also define $\lambda\cup \mu=(M^{c_M},\dots,2^{c_2},1^{c_1})$, where $\lambda=(M^{a_M},\dots,2^{a_2},1^{a_1})$,
$\mu=(M^{b_M},\dots,2^{b_2},1^{b_1})$ and $c_i=a_i+b_i$ for $i\ge1$. 
For a partition $\lambda=(\lambda_1,\dots,\lambda_k)$ and an integer $m\ge0$, we define
$m\lambda=(m\lambda_1,\dots,m\lambda_k)$.

We will identify a partition $\lambda$ with its \emph{Young diagram}  $\{(i,j)\in \mathbb{Z}\times\mathbb{Z}: 1\le i\le \ell(\lambda), 1\le j\le \lambda_i\}$.
The \emph{transpose} $\lambda'$ of $\lambda$ is the partition whose Young diagram is given by
$\{(j,i):(i,j)\in\lambda\}$. More generally, for any subset $D\subseteq\mathbb{Z}\times\mathbb{Z}$, we define
$D'=\{(j,i)\in \mathbb{Z}\times\mathbb{Z}: (i,j)\in D\}$.

For two partitions $\lambda$ and $\mu$, the notation $\mu\subseteq\lambda$ means that the Young diagram of $\mu$ is a subset of the Young diagram of $\lambda$. In this case, the \emph{skew shape} $\lm$ is defined to be the set theoretic difference $\lambda\setminus\mu$. 
If $\lambda$ is a \emph{strict partition}, i.e., $\lambda_1>\dots>\lambda_{\ell(\lambda)}$, the \emph{shifted Young diagram},
denoted by $\lambda^*$, is the set $\{(i,j)\in \mathbb{Z}\times\mathbb{Z}: 1\le i\le \ell(\lambda), ~i\le j\le \lambda_i+i-1\}$.
For two strict partitions $\lambda$ and $\mu$ with $\mu^*\subseteq\lambda^*$, the \emph{shifted skew shape} 
$\lambda^*/\mu^*$ is also defined to be the set theoretic difference $\lambda^*\setminus\mu^*$. 
The Young diagram (resp.~shifted Young diagram) of a partition $\lambda$ will also be considered as the skew shape $\lambda/\emptyset$ 
(resp.~shifted skew shape $\lambda^*/\emptyset^*$).

Let $\pi$ be a skew shape $\lm$ or a shifted skew shape $\lambda^*/\mu^*$. Then $\pi$ is represented by an array of \emph{cells} as shown in Figure~\ref{fig:YD}. We can identify each element in $\pi$ with the corresponding cell in the graphical representation of $\pi$. The \emph{size} of $\pi$, denoted by $|\pi|$, is the number of cells in $\pi$.  

\begin{figure}
  \centering
  \begin{tikzpicture}[scale=.5]
\cell13{} \cell14{}
\cell22{} \cell23{}
\cell31{}
\draw (0,-3) -- (0,-2) -- (1,-2) -- (1,-1) -- (2,-1)--(2,0)--(4,0);
\draw[dotted] (0,-2)--(0,0)--(2,0);
  \end{tikzpicture} \qquad \qquad
  \begin{tikzpicture}[scale=.5]
\cell14{} \cell15{} \cell16{}
\cell23{} \cell24{} \cell25{}\cell26{}
\cell33{} \cell34{}
\draw (2,-3) -- (2,-1) -- (3,-1) -- (3,0) -- (6,0);
\draw[dotted] (2,-2)--(1,-2)--(1,-1)--(0,-1)--(0,0)--(3,0);
  \end{tikzpicture}
  \caption{The skew shape $(4,3,1)/(2,1)$ on the left and the shifted skew shape $(6,5,2)^*/(3,1)^*$ on the right.}
  \label{fig:YD}
\end{figure}
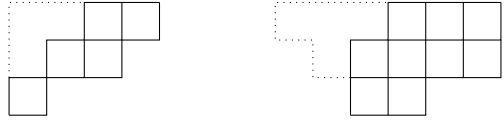
 
A \emph{standard Young tableau (SYT)} of shape $\pi$ is a filling of the cells in $\pi$ with  $1,2,\dots,|\pi|$ such that the entries are increasing in each row and in each column. A \emph{semistandard Young tableau (SSYT)} of shape $\pi$ is a filling of the cells in $\pi$ with nonnegative integers such that the entries are weakly increasing in each row and strictly increasing in each column.  A \emph{row strict tableau (RST)} of shape $\pi$ is a filling of the cells in $\pi$ with nonnegative integers such that the entries are strictly increasing in each row and weakly increasing in each column. A \emph{reverse plane partition (RPP)} of shape $\pi$ is a filling of the cells in $\pi$ with nonnegative integers such that the entries which are weakly increasing in each row and in each column. See Figure~\ref{fig:tab} for examples of these objects. 
We denote by $\SYT(\pi)$, $\SSYT(\pi)$, $\RST(\pi)$ and $\RPP(\pi)$, respectively,  the set of 
SYTs, SSYTs, RSTs and RPPs of shape $\pi$. We will simply call an element in one of these sets a \emph{tableau} of shape $\pi$.
For a tableau $T$, we denote by $|T|$ the sum of the entries in $T$, by $\max(T)$ the largest entry in $T$ and by $\min(T)$ the smallest entry in $T$.

\begin{figure}
  \centering
  \begin{tikzpicture}[scale=.5]
\cell11{1} \cell12{3} \cell13{4} \cell14{8}
\cell21{2} \cell22{5} \cell23{7}
\cell31{6}
\draw (0,-3) -- (0,0)--(4,0);
  \end{tikzpicture} \qquad 
  \begin{tikzpicture}[scale=.5]
\cell11{0} \cell12{0} \cell13{1} \cell14{1}
\cell21{1} \cell22{2} \cell23{4}
\cell31{5}
\draw (0,-3) -- (0,0)--(4,0);
  \end{tikzpicture} \qquad 
  \begin{tikzpicture}[scale=.5]
\cell11{0} \cell12{1} \cell13{3} \cell14{7}
\cell21{0} \cell22{1} \cell23{4}
\cell31{1}
\draw (0,-3) -- (0,0)--(4,0);
  \end{tikzpicture} \qquad 
  \begin{tikzpicture}[scale=.5]
\cell11{0} \cell12{0} \cell13{1} \cell14{4}
\cell21{0} \cell22{1} \cell23{1}
\cell31{3}
\draw (0,-3) -- (0,0)--(4,0);
  \end{tikzpicture} 
  \caption{An SYT, an SSYT, an RST and an RPP of shape $(4,3,1)$ from left to right.}
  \label{fig:tab}
\end{figure}
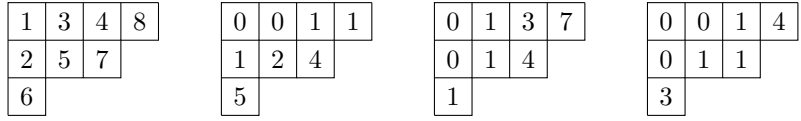

The following well known lemma tells us that the number of standard Young tableaux can be computed from the generating function of RPPs or SSYTs. 

\begin{lem}\label{lem:SYT_RPP}
For any skew shape or shifted skew shape $\pi$, we have
\[
f^\pi=\lim_{q\to1} (q;q)_{|\pi|}\sum_{T\in \RPP (\pi)}q^{|T|}=\lim_{q\to1} (q;q)_{|\pi|}\sum_{T\in \SSYT(\pi)}q^{|T|}.  
\]  
\end{lem}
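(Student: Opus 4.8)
The plan is to deduce the lemma from Stanley's theory of $(P,\omega)$-partitions, exactly the ``connection between linear extensions and $P$-partitions'' alluded to in the introduction. To each skew shape or shifted skew shape $\pi$ I associate the poset $P=P_\pi$ whose elements are the cells of $\pi$, ordered by $(i,j)\preceq(i',j')$ if and only if $i\le i'$ and $j\le j'$. Its covering relations are precisely the horizontal steps $(i,j)\lessdot(i,j+1)$ and the vertical steps $(i,j)\lessdot(i+1,j)$ lying inside $\pi$. A strictly order-preserving bijection $P\to\{1,\dots,|\pi|\}$ is exactly a standard Young tableau of shape $\pi$ (entries increasing along rows and columns), so the set $\LL(P)$ of linear extensions of $P$ satisfies $|\LL(P)|=f^\pi$; this holds verbatim in the shifted case since there the SSYT/RPP definitions impose only the row and column conditions.

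Next I would realize both generating functions as $(P,\omega)$-partition generating functions for two labelings of this single poset. For the RPP side, take $\omega$ to be any natural labeling (order-preserving along $\preceq$); then a $(P,\omega)$-partition is a map $\sigma\colon P\to\NN$ that is weakly increasing along every covering relation, i.e.\ weakly increasing along rows and along columns, which is exactly a reverse plane partition of shape $\pi$. For the SSYT side, take a labeling $\omega$ that increases along each row and decreases along each column; such a column-strict labeling exists for any $\pi$ (for instance, read the cells column by column from left to right and, within each column, from bottom to top). With this $\omega$, the strictness rule in the definition of a $(P,\omega)$-partition forces strict increase along vertical covers while permitting equality along horizontal covers, so the $(P,\omega)$-partitions are precisely the semistandard Young tableaux of shape $\pi$, again with values in $\NN$.

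By Stanley's fundamental theorem on $(P,\omega)$-partitions, in either case
\[
\sum_{T}q^{|T|}=\frac{\sum_{w\in\LL(P,\omega)}q^{\maj(w)}}{(q;q)_{|\pi|}},
\]
where $\LL(P,\omega)$ records the linear extensions as permutations through $\omega$ and $\maj$ is the associated major index. Multiplying by $(q;q)_{|\pi|}$ cancels the denominator, and since each monomial $q^{\maj(w)}$ tends to $1$ as $q\to1$, the limit equals $|\LL(P,\omega)|$. Because the number of linear extensions of $P$ does not depend on the labeling, $|\LL(P,\omega)|=|\LL(P)|=f^\pi$ for both choices of $\omega$, establishing the two claimed equalities at once.

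I expect no serious obstacle, as the statement is classical; the only points requiring care are the existence of a column-strict labeling for an arbitrary (possibly shifted) skew shape, the verification that the two labelings genuinely recover RPPs and SSYTs as $\NN$-valued $(P,\omega)$-partitions (so the generating functions converge and assume the stated rational form), and the observation that changing the labeling alters the $\maj$-statistic in the numerator but not the underlying set of linear extensions. Granting these, the termwise passage to $q\to1$ on the finite numerator is immediate.
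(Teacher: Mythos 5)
Your proposal is correct and follows essentially the same route as the paper: both reduce the lemma to Stanley's fundamental theorem on $(P,\omega)$-partitions for the cell poset of $\pi$, identify RPPs (resp.\ SSYTs) with $(P,\omega)$-partitions for a natural (resp.\ column-strict) labeling, and take the $q\to1$ limit of the finite numerator $\sum_{w\in\LL(P,\omega)}q^{\maj(w)}$, using that $|\LL(P,\omega)|=f^\pi$ is independent of the labeling. The only difference is cosmetic: the paper orders the poset so that RPPs are literally the $(P,\omega)$-partitions in Stanley's order-reversing convention and cites \cite[Proposition~7.19.11]{EC2} for the SSYT case, whereas you make the second labeling explicit.
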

\begin{proof}
This can be proved using the $(P,\omega)$-partition theory. 
We follow the terminologies in \cite[Section~3.15]{EC1}.
Let $P$ be the poset whose elements are the cells $(i,j)\in\pi$ with relation
$(i,j)\le_P(i',j')$ if   $i\ge i'$ and $j\ge j'$. Let $\omega$ be any natural labeling of $P$.
Then $\SYT(\pi)$ is in bijection with the set $\LL(P,\omega)$ of linear extensions of $P$, 
and the RPPs of shape $\pi$ can be considered as the $(P,\omega)$-partitions. By \cite[Theorem~3.15.7]{EC1},
\[
\sum_{w\in \LL(P,\omega)} q^{\maj (w)} = (q;q)_{|\pi|}\sum_{T\in \RPP (\pi)}q^{|T|}.  
\]
Since $f^\pi=|\SYT(\pi)|=|\LL(P,\omega)|$, by taking the $q\to1$ limit, we obtain the first identity. 
The second identity can be proved similarly, see \cite[Proposition~7.19.11]{EC2}.
\end{proof}

Let $T$ be a tableau of shifted shape $\lambda^*$ for a strict partition $\lambda$ with $\ell(\lambda)=\ell$. The \emph{reverse diagonal} of $T$ is the partition $\rdiag(T)=(d_{\ell}, d_{\ell-1},\dots,d_1)$, where
$d_i$ is the entry in the cell $(i,i)$ of $T$ for $1\le i\le \ell$. For example, if $T_1,T_2$ and $T_3$ are the tableaux in
Figure~\ref{fig:rdiag} from left to right, then $\rdiag(T_1)=(5,1,0)$, $\rdiag(T_2)=(4,1,0)$ and $\rdiag(T_3)=(2,0,0)$.

\begin{figure}
  \centering
  \begin{tikzpicture}[scale=.5]
\cell11{0} \cell12{0} \cell13{0} \cell14{1} \cell15{2}
\cell22{1} \cell23{2} \cell24{2}
\cell33{5} \cell34{6}
\draw (2,-3) -- (2,-2); \draw (1,-2) -- (1,-1); \draw (0,-1)--(0,0)--(5,0);
  \end{tikzpicture} \qquad 
  \begin{tikzpicture}[scale=.5]
\cell11{0} \cell12{1} \cell13{2} \cell14{4} \cell15{6}
\cell22{1} \cell23{3} \cell24{4}
\cell33{4} \cell34{6}
\draw (2,-3) -- (2,-2); \draw (1,-2) -- (1,-1); \draw (0,-1)--(0,0)--(5,0);
  \end{tikzpicture} \qquad 
  \begin{tikzpicture}[scale=.5]
\cell11{0} \cell12{0} \cell13{0} \cell14{1} \cell15{2}
\cell22{0} \cell23{1} \cell24{1}
\cell33{2} \cell34{3}
\draw (2,-3) -- (2,-2); \draw (1,-2) -- (1,-1); \draw (0,-1)--(0,0)--(5,0);
  \end{tikzpicture} 
  \caption{An SSYT, an RST and an RPP of shape $(5,3,2)^*$ from left to right.}
  \label{fig:rdiag}
\end{figure}

For a partition $\mu=(\mu_1,\dots, \mu_n)$, we denote $q^\mu=(q^{\mu_1},\dots, q^{\mu_n})$. 
The following theorem is a key ingredient in this paper. 

\begin{thm}\cite[Theorem 8.7]{KimStanton17}\label{thm:KS_8.7}
For $\lambda, \mu\in \Par_n$, we have 
\[
\sum_{\substack{T\in \RPP((\delta_{n+1}+\lambda)^\ast) \\ \rdiag (T)=\mu}} q^{|T|} = 
\frac{q^{-\nn(\delta_{n+1}+\lambda)}}{\prod_{j=1}^n (q;q)_{\lambda_j +n -j}}q^{|\mu+\delta_n|} \overline{a}_{\lambda+\delta_n }(q^{\mu +\delta_n}).
\]
\end{thm}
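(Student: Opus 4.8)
The plan is to prove the identity by interpreting the constrained sum on the left as a weighted count of families of non-intersecting lattice paths and then invoking the Lindstr\"om--Gessel--Viennot (LGV) lemma; the determinantal (bialternant) shape of the right-hand side is exactly what such an approach should produce. Write $\nu=\delta_{n+1}+\lambda$, a strict partition with $n$ positive parts, so that $\nu^\ast$ is the shifted staircase $\{(i,j):1\le i\le j\le n\}$ with $\lambda_i$ extra cells appended to the right of row $i$; thus row $i$ occupies columns $i,\dots,n+\lambda_i$, and the condition $\rdiag(T)=\mu$ fixes the entries $T(i,i)=\mu_{n+1-i}$ on the main diagonal (note $\mu_{n+1-1}\ge\cdots$ gives the required monotonicity $T(1,1)\le\cdots\le T(n,n)$). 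Since an element of $\RPP(\nu^\ast)$ has \emph{weakly} increasing columns, I would first pass to a strict-column tableau by a shift of the form $T(i,j)\mapsto T(i,j)+(i-1)$, which turns the column inequalities into strict ones at the cost of a global power of $q$; this accounts for part of the prefactor $q^{-\nn(\nu)}$.

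Next I would encode each shifted tableau as $n$ monotone lattice paths, one per row, with the sources pinned down by the fixed diagonal values $\mu_{n+1-i}$ and the sinks left free at the right ends of the rows. The key structural point to verify is that non-intersection of the paths is \emph{equivalent} to the row- and column-monotonicity of the (shifted) tableau, and that the path weight reproduces $q^{|T|}$ up to the global shift above. Because entries are unbounded above, a single row of length $\nu_i-1$ strictly above the diagonal contributes, on its own, a generating function of the form $q^{(\nu_i-1)\mu_{n+1-i}}/(q;q)_{\nu_i-1}$ by the $q$-analogue of the stars-and-bars count for weakly increasing sequences; this is the source both of the denominator $\prod_j(q;q)_{\lambda_j+n-j}=\prod_j(q;q)_{\nu_j-1}$ and of the diagonal contributions that assemble into $q^{|\mu+\delta_n|}$.

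Applying the LGV lemma then expresses the sum over non-intersecting families as a determinant $\det(W_{ij})$, where $W_{ij}$ is the generating function for a single path from source $i$ to sink $j$. I would evaluate each $W_{ij}$ by summing the relevant geometric series, so that after factoring the common column denominators $1/(q;q)_{\nu_j-1}$ out of the determinant the remaining entries become pure monomials in $q^{\mu_j+n-j}$; the resulting monomial determinant is precisely $\overline{a}_{\lambda+\delta_n}(q^{\mu+\delta_n})$, with the reversal of columns (hence the bar, i.e.\ the sign $(-1)^{\binom n2}$) dictated by the left-to-right orientation of the sinks. Matching the leftover powers of $q$ against $q^{-\nn(\nu)}$ and $q^{|\mu+\delta_n|}$ then completes the identification.

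The main obstacle I anticipate is the bookkeeping forced by the \emph{shifted} geometry: unlike the straight-shape case, the sources lie on the diagonal and the datum $\mu$ must simultaneously serve as the lower flag for each row and as the pinned source heights, so one must check carefully that the LGV non-intersection condition captures exactly the diagonal inequalities $T(1,1)\le\cdots\le T(n,n)$ together with the interior column conditions, with no spurious or missing constraints. A secondary but delicate point is tracking orientation and sign so that the determinant is the reversed bialternant $\overline{a}$ rather than $a$, and confirming that all three $q$-prefactors (the power $q^{-\nn(\nu)}$, the $(q;q)$ denominators, and $q^{|\mu+\delta_n|}$) emerge with the correct exponents from the shift and the free-sink summations. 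Should the direct path model prove awkward, an alternative route is to peel off the outermost diagonal cell (or the last row), set up a recursion in $n$, and match it against the Laplace/cofactor expansion of the bialternant; but I expect the LGV argument to be the cleanest.
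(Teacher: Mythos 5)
First, a point of reference: the paper does not prove this statement at all --- it is quoted verbatim from \cite[Theorem~8.7]{KimStanton17} and used as a black box --- so there is no internal proof to compare your argument against; it has to stand on its own. The reduction you begin with is correct: the shift $T(i,j)\mapsto T(i,j)+(i-1)$ adds exactly $\nn(\delta_{n+1}+\lambda)$ to $|T|$, turns the RPP into an SSYT with $\rdiag$ equal to $\mu+\delta_n$, and therefore accounts precisely for the prefactor $q^{-\nn(\delta_{n+1}+\lambda)}$, reducing the theorem to the SSYT identity \eqref{eqn:ssytgf1}. Fixing the diagonal is also the right reason to expect a determinant rather than the Pfaffian one would normally get for a shifted shape.

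The genuine gap is in the central step, ``after factoring the common column denominators $1/(q;q)_{\nu_j-1}$ out of the determinant the remaining entries become pure monomials.'' In the path model you describe this fails for two concrete reasons. (i) A single path from the source of row $i$ (pinned at column $i$, height $s$) to the free sink at the end of row $j$ (column $n+\lambda_j$) has $L=n+\lambda_j-i$ weakly increasing horizontal steps, hence generating function $q^{Ls}/(q;q)_{L}$; since $L$ depends on \emph{both} indices, the Pochhammer denominators of the LGV matrix cannot be extracted as common row or column factors, and what remains is not a monomial matrix. (ii) The $(i,j)$ entry of $\overline{a}_{\lambda+\delta_n}(q^{\mu+\delta_n})$ is $q^{(\mu_{n+1-j}+j-1)(\lambda_i+2(n-i))}$, and the exponent $\lambda_i+2(n-i)$ exceeds the number $\lambda_i+n-i$ of off-diagonal cells of row $i$ by $n-i$; a model with one horizontal step per cell cannot produce this surplus, so you must insert $n-i$ forced steps per row (or an equivalent reweighting), and verifying that one such augmentation simultaneously repairs (i) and (ii) while keeping non-intersection equivalent to the column inequalities along the staggered left boundary of the shifted shape is exactly the content that is missing --- it is a modelling problem, not bookkeeping. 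If you want to salvage the approach, either prove a flagged Jacobi--Trudi-type determinant by LGV first and convert it to the bialternant in a separate algebraic step, or induct on $n$ by peeling the first row and matching against the cofactor expansion of $\overline{a}_{\lambda+\delta_n}(q^{\mu+\delta_n})$; the $n=1,2$ cases check out readily and suggest the induction is the shorter road.
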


Given an RPP, by adding $(i-1)$'s to all the cells in the $i$-th column, we can get an RST, and 
similarly, by adding $(i-1)$'s to the cells in the $i$-th row, we can get an SSYT. For example, the SSYT and the RST in Figure~\ref{fig:rdiag} 
are obtained in this way from the RPP on the right. Note that
if $T\in\RPP((\delta_{n+1}+\lambda)^*)$ and $\rdiag(T)=\mu$, then the resulting SSYT or RST $T'$ has $\rdiag(T') = \mu+\delta_n$.
Applying this process to Theorem \ref{thm:KS_8.7}, we get the generating functions for SSYT and RST with fixed diagonal. 

\begin{cor}\label{cor:gfs}
For $\lambda, \nu\in \Par_n$, we have 
\begin{align}
\sum_{\substack{T\in \SSYT((\delta_{n+1}+\lambda)^\ast) \\ \rdiag (T)=\nu}} q^{|T|} 
&= \frac{q^{|\nu|}}{\prod_{j=1}^n (q;q)_{\lambda_j +n -j}} \overline{a}_{\lambda+\delta_n }(q^{\nu}),\label{eqn:ssytgf1}\\
\sum_{\substack{T\in \RST((\delta_{n+1}+\lambda)^\ast) \\ \rdiag (T)=\nu}} q^{|T|} 
&= \frac{q^{|\nu|+\nn(\lambda')-\nn(\lambda)+n|\lambda|+\binom{n+1}{3}}}{\prod_{j=1}^n (q;q)_{\lambda_j +n -j}} \overline{a}_{\lambda+\delta_n }(q^{\nu}).\label{eqn:rstgf}
\end{align}
\end{cor}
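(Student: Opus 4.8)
The plan is to derive Corollary~\ref{cor:gfs} directly from Theorem~\ref{thm:KS_8.7} by carrying out the column-adding (resp.\ row-adding) bijection described in the paragraph immediately preceding the statement, and tracking its effect on the weight $q^{|T|}$ and on the reverse diagonal. The key observation, already recorded in the text, is that the map $T\mapsto T'$ obtained by adding $(i-1)$ to every cell in the $i$-th column (resp.\ row) of an RPP is a weight-shifting bijection from $\RPP((\delta_{n+1}+\lambda)^\ast)$ onto $\SSYT((\delta_{n+1}+\lambda)^\ast)$ (resp.\ $\RST$), and it sends the reverse-diagonal statistic $\rdiag(T)=\mu$ to $\rdiag(T')=\mu+\delta_n$. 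So the two identities in the corollary are just Theorem~\ref{thm:KS_8.7}, reindexed by $\nu=\mu+\delta_n$ and corrected by the shift in $|T|$.

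First I would prove \eqref{eqn:ssytgf1}. I would compute the total amount added to $|T|$ by the column-adding map: a cell in column $j$ of the shifted diagram $(\delta_{n+1}+\lambda)^\ast$ receives $j-1$, so $|T'|-|T|=\sum_{(i,j)}(j-1)=\nn((\delta_{n+1}+\lambda)^\ast{}')$ summed over the cells. I would then substitute $\mu=\nu-\delta_n$ into Theorem~\ref{thm:KS_8.7}, so that the restriction $\rdiag(T)=\mu$ becomes $\rdiag(T')=\nu$, multiply the right-hand side by $q^{|T'|-|T|}$, and simplify. In the RPP formula the exponent and prefactor are $q^{-\nn(\delta_{n+1}+\lambda)}/\prod_j(q;q)_{\lambda_j+n-j}$ together with $q^{|\mu+\delta_n|}\overline a_{\lambda+\delta_n}(q^{\mu+\delta_n})$; after the substitution $\mu+\delta_n=\nu$ the Schur-like factor becomes exactly $\overline a_{\lambda+\delta_n}(q^{\nu})$ and the monomial is $q^{|\nu|}$, and I expect the column-shift contribution to cancel against $q^{-\nn(\delta_{n+1}+\lambda)}$, leaving precisely the stated right-hand side of \eqref{eqn:ssytgf1}. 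The main bookkeeping is therefore an identity of the form $(\text{column sum})-\nn(\delta_{n+1}+\lambda)=0$, which should follow from the definition $\nn(\nu)=\sum_i(i-1)\nu_i$ applied to the transpose.

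For \eqref{eqn:rstgf} I would run the same argument with the row-adding map instead, where a cell in row $i$ receives $i-1$, so the total shift is $\nn((\delta_{n+1}+\lambda)^\ast)$ rather than its transposed version. The extra exponent $\nn(\lambda')-\nn(\lambda)+n|\lambda|+\binom{n+1}{3}$ appearing in \eqref{eqn:rstgf}, as opposed to \eqref{eqn:ssytgf1}, is exactly the difference between the row-shift and the column-shift, and I would verify this by a direct computation of $\sum_{(i,j)}\big((i-1)-(j-1)\big)$ over the shifted diagram $(\delta_{n+1}+\lambda)^\ast$. Splitting this into the staircase part $\delta_{n+1}$ and the $\lambda$-part, and using $\sum_{(i,j)\in\delta_{n+1}^\ast}(i-j)=\binom{n+1}{3}$ for the staircase, should reproduce the three terms $\nn(\lambda')-\nn(\lambda)$, $n|\lambda|$ and $\binom{n+1}{3}$.

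The hard part, such as it is, is purely the arithmetic of these $\nn$-type sums over a \emph{shifted} staircase-plus-$\lambda$ diagram; the combinatorial content (bijectivity, weight-shift, diagonal-shift) is already granted by the preceding discussion and by Theorem~\ref{thm:KS_8.7}. I would double-check the staircase constant $\binom{n+1}{3}$ and the sign/index conventions in $\overline a_{\lambda+\delta_n}$ against the $n=1$ or $\lambda=\emptyset$ base case to make sure no off-by-one in $\delta_n$ versus $\delta_{n+1}$ has crept in, since that is the most likely place for an error to hide.
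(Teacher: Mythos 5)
Your overall strategy is exactly the one the paper uses: Corollary~\ref{cor:gfs} is meant to follow from Theorem~\ref{thm:KS_8.7} by pushing the row/column-adding bijections through the RPP generating function and tracking the shift in $|T|$ and in $\rdiag$. However, you have attached the two bijections to the wrong targets, and as a consequence both of your key arithmetic claims are false as stated. As the paragraph preceding the corollary records, adding $(i-1)$ to the cells of the $i$-th \emph{row} produces an SSYT (rows stay weakly increasing, columns become strict), while adding $(j-1)$ to the cells of the $j$-th \emph{column} produces an RST (rows become strict). You use the column-adding map to prove \eqref{eqn:ssytgf1} and the row-adding map to prove \eqref{eqn:rstgf}, i.e., the reverse; carried out literally, your computation would produce the two right-hand sides with their labels interchanged.

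Concretely, the cancellation needed for \eqref{eqn:ssytgf1} is $(\text{row sum})=\sum_{(i,j)}(i-1)=\nn(\delta_{n+1}+\lambda)$, which holds on the nose because $\nn(\kappa)=\sum_i(i-1)\kappa_i$ counts $(i-1)$ once for each cell of row $i$. Your claimed identity $(\text{column sum})-\nn(\delta_{n+1}+\lambda)=0$ is false: already for $n=2$ and $\lambda=\emptyset$ the shifted staircase $\{(1,1),(1,2),(2,2)\}$ has column sum $0+1+1=2$ while $\nn(\delta_3)=1$. Likewise, since $i\le j$ on a shifted diagram, $\sum_{(i,j)\in(\delta_{n+1})^*}(i-j)=-\binom{n+1}{3}$, not $+\binom{n+1}{3}$ as you assert; the extra exponent in \eqref{eqn:rstgf} is the \emph{column-minus-row} sum $\sum_{(i,j)}(j-i)=\sum_{i=1}^n\binom{\lambda_i+n+1-i}{2}=\nn(\lambda')-\nn(\lambda)+n|\lambda|+\binom{n+1}{3}$, consistent with the RST being the column-adding image. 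Note that your proposed sanity checks at $n=1$ or $\lambda=\emptyset$ would not catch the swap (for $n=1$ the row and column sums coincide), so test at $n=2$. Once the two maps are interchanged, your argument is correct and coincides with the paper's.
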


For a partition $\lambda$, the \emph{hook length} of $(i,j)\in \lambda$ is defined by
\[
h_\lambda(i,j)=\lambda_i+\lambda'_j-i-j+1.
\]
Now let $\lambda$ be a strict partition with $\ell(\lambda)=n$. Then
$\lambda=\delta_{n+1}+\mu$ for some $\mu\in\Par_n$. The \emph{shifted hook length} of $(i,j)\in\lambda^*$ is defined by
\[
h_{\lambda^*}(i,j)=
\begin{cases}
n+1+\mu_i, &\mbox{if  $i=j$,}\\
\mu_i+\mu_{j}+2(n+1)-i-j, & \mbox{if  $i<j\le n$,}\\  
h_\mu(i,j-n), & \mbox{if  $j>n$.}
\end{cases}
\]
For an example, see Figure~\ref{fig:slhex}. 
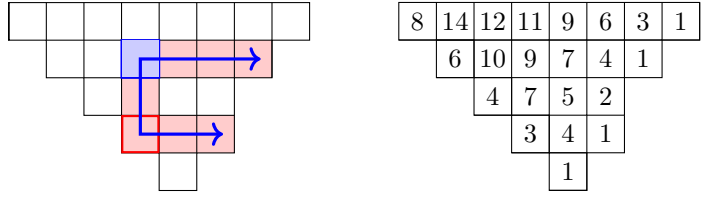
\begin{figure}
  \centering
\begin{tikzpicture}
\fill [fill=red!20!white] (2,1.5) rectangle (3.5,2);
\fill [fill=red!20!white] (1.5,.5) rectangle (2,1.5);
\fill [fill=red!20!white] (2,.5) rectangle (3,1);
\draw (0,2)--(0,2.5)--(4,2.5)--(4,2)--(3.5,2)--(3.5,1.5)--(3,1.5)--(3,0.5)--(2.5,0.5)--(2.5,0)--(2,0)--(2,.5)--(1.5,.5)--(1.5,1)--(1,1)--(1,1.5)--(.5,1.5)--(.5,2)--(0,2)--(0,2.5)--cycle;
\draw (.5,2)--(3.5,2);
\draw (1,1.5)--(3,1.5);
\draw (1.5,1)--(3,1);
\draw (2, .5)--(2.5,.5);
\draw (.5, 2)--(.5, 2.5);
\draw (1, 1.5)--(1,2.5);
\draw (1.5, 1)--(1.5, 2.5);
\draw (2, .5)--(2, 2.5);
\draw (2.5, .5)--(2.5, 2.5);
\draw (3, 1.5)--(3, 2.5);
\draw (3.5, 1.5)--(3.5, 2.5);
\draw [blue, thick] (1.5, 1.5) rectangle (2,2);
\fill [fill=blue!20!white] (1.51, 1.51) rectangle (1.99,1.99);
\draw [red, thick] (1.51, .51) rectangle (1.99,0.99);
\draw [blue,very thick, <->] (3.35, 1.75)--(1.75,1.75)--(1.75,.75)--(2.85,.75);
\end{tikzpicture}
\qquad\quad
\begin{tikzpicture}[scale=.5]
\draw (0,-1)--(0,0)--(8,0)--(8,-1)--(0,-1)--cycle;
\draw (1, -2)--(1,-1)--(2,-1)--(2,-2)--(1,-2)--cycle;
\draw (2, -3)--(2,-2)--(3,-2)--(3,-3)--(2,-3)--cycle;
\draw (3, -4)--(3,-3)--(4,-3)--(4,-4)--(3,-4)--cycle;
\draw (4, -5)--(4,-4)--(5,-4)--(5,-5)--(4,-5)--cycle;
\cell11{8} \cell12{14}\cell13{12}\cell14{11}\cell15{9}\cell16{6}\cell17{3}\cell18{1}
\cell22{6}\cell23{10}\cell24{9}\cell25{7}\cell26{4}\cell27{1}
\cell33{4}\cell34{7}\cell35{5}\cell36{2}
\cell44{3}\cell45{4}\cell46{1}
\cell55{1}
\end{tikzpicture}
\caption{An example of $\lambda=\delta_6 +(3,2,1,1)$ for $n=5$. The left depicts the set of cells contained in the hook of the cell $(2,4)$ (blue cell in the picture) and the right shows 
the shifted hook lengths of $\lambda^\ast$. Note that the red cell in the diagonal is counted twice.  }\label{fig:slhex}
\end{figure}

The \emph{$q$-integral} of $f(x)$ over $[a,b]$ is defined by
\[
\int_{a}^b f(x)d_qx=(1-q)\sum_{i\ge0} (f(bq^i)bq^i-f(aq^i)aq^i),
\]
where $0<q<1$ and the sum is assumed to absolutely converge.
We also define the multivariate $q$-integral 
\[
\int_{0\le x_1\le \cdots \le x_n\le 1}f(x_1,\dots, x_n)d_q x_1\cdots d_q x_n
=\int_0^1\int_0^{x_n}\int_0^{x_{n-1}}\dots\int_0^{x_2}f(x_1,\dots, x_n)d_q x_1\cdots d_q x_n.
\]
The following lemma tells us how we make a change of variables in the $q$-integral. 
 
\begin{lem}\label{lem:cov}
Let $f(x,q)$ be a function with two variables $x$ and $q$. For an integer $m\ge0$, we have
$$\int_0 ^1 f(x^m, q^m)d_q x =\frac{1-q}{1-p} \int_0 ^1 f(x, p) \cdot x^{\frac{1-m}{m}}d_p x,$$
where $p=q^m$. 
\end{lem}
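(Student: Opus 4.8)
The plan is to prove the identity by directly expanding both sides using the definition of the one-dimensional $q$-integral over $[0,1]$ and comparing the resulting series term by term. The only structural feature to exploit is that for the interval $[0,1]$ the defining sum collapses: the contribution at the lower endpoint $a=0$ vanishes, so that for any admissible integrand $g$,
\[
\int_0^1 g(x)\, d_q x = (1-q)\sum_{i\ge 0} g(q^i)\, q^i.
\]

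First I would apply this to the left-hand side with $g(x) = f(x^m, q^m)$, which gives
\[
\int_0^1 f(x^m,q^m)\, d_q x = (1-q)\sum_{i\ge 0} f(q^{mi}, q^m)\, q^i.
\]
Next I would expand the right-hand side. Writing $p=q^m$ and using the same collapsed formula with base $p$ in place of $q$ and integrand $g(x)=f(x,p)\,x^{(1-m)/m}$, the $p$-integral becomes
\[
\int_0^1 f(x,p)\, x^{\frac{1-m}{m}}\, d_p x = (1-p)\sum_{i\ge 0} f(p^i, p)\, (p^i)^{\frac{1-m}{m}}\, p^i.
\]

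The heart of the argument is then a short exponent computation. Since $p=q^m$ we have $p^i = q^{mi}$, so the weight contributes $(p^i)^{(1-m)/m} = q^{i(1-m)}$, and together with the factor $p^i = q^{mi}$ coming from the $q$-integral measure the total power of $q$ is $q^{i(1-m)+mi}=q^i$. Hence the $p$-integral equals $(1-p)\sum_{i\ge 0} f(q^{mi},q^m)\,q^i$, and multiplying by $\frac{1-q}{1-p}$ cancels the factor $1-p$ and reproduces exactly the expansion of the left-hand side obtained above.

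The proof is therefore essentially a matter of matching exponents, and I do not expect any substantial obstacle; the only points requiring care are the collapse of the lower endpoint at $0$ and the assumption, already built into the definition of the $q$-integral, that the relevant series converge absolutely, which justifies the termwise manipulation.
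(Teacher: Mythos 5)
Your proof is correct and is essentially identical to the paper's: both expand the $q$-integral as $(1-q)\sum_{i\ge 0} f(q^{mi},q^m)q^i$ and match exponents via $q^i = q^{i(1-m)}\cdot q^{mi}$, the only cosmetic difference being that the paper transforms the left side into the right in one chain while you expand both sides and compare. No gap.
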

\begin{proof}
By the definition of the $q$-integral, the left hand side is 
\begin{align*}
\int_0 ^1 f(x^m, q^m)d_q x &= (1-q)\sum_{i\ge 0} f(q^{mi},q^m) q^i\\
&= (1-q)\sum_{i\ge 0}f(p^i, p)p^{i/m}\\
&= \frac{1-q}{1-p}\cdot  (1-p)\sum_{i\ge 0}f(p^i, p)p^{\frac{1-m}{m}i}\cdot p^i,
\end{align*}
which is equal to the right hand side. 
\end{proof}

 The following lemma explains how we write the $q$-summation of certain functions in terms of the $q$-integral.

\begin{lem}\cite[Lemma 4.3]{KimStanton17}\label{lem:gf}
For a function $f(x_1,\dots, x_n)$ satisfying $f(x_1,\dots, x_n)=0$ if $ x_i =x_j$ for any $i\ne j$, 
$$\sum_{\mu\in \Par_n}q^{|\mu+\delta_n|}f(q^{\mu+\delta_n})=\frac{1}{(1-q)^n}\int_{0\le x_1\le \cdots \le x_n\le 1}f(x_1,\dots, x_n)d_q x_1\cdots d_q x_n.$$
\end{lem}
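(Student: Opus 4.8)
The plan is to unfold the iterated $q$-integral on the right-hand side directly from its definition and then match the resulting sum term-by-term with the left-hand side, invoking the vanishing hypothesis on $f$ only at the very end to reconcile the two ranges of summation.

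First I would expand the $n$-fold $q$-integral one layer at a time. Since every single-variable $q$-integral with lower limit $0$ reads $\int_0^b g(x)\,d_qx=(1-q)\sum_{i\ge0}g(bq^i)\,bq^i$, the innermost integration in $x_1$ samples $x_1=x_2q^{j_1}$, the next samples $x_2=x_3q^{j_2}$, and so on, with $x_n=q^{j_n}$ coming from the outermost integral over $[0,1]$; here each $j_i$ runs over the nonnegative integers. Composing these substitutions yields the sample points $x_i=q^{\,j_i+j_{i+1}+\cdots+j_n}$, and each layer contributes a measure factor $(1-q)\,x_i$.

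Next I would reindex. Setting $k_i=j_i+j_{i+1}+\cdots+j_n$ for $1\le i\le n$ (with $k_{n+1}=0$) gives a bijection between tuples $(j_1,\dots,j_n)$ of nonnegative integers and weakly decreasing tuples $k_1\ge k_2\ge\cdots\ge k_n\ge0$, with inverse $j_i=k_i-k_{i+1}$. Under this change of index $x_i=q^{k_i}$ and the product of the measure factors becomes $(1-q)^n q^{k_1+\cdots+k_n}$, so that
\[
\frac{1}{(1-q)^n}\int_{0\le x_1\le\cdots\le x_n\le1}f(x_1,\dots,x_n)\,\dqx
=\sum_{k_1\ge\cdots\ge k_n\ge0}q^{k_1+\cdots+k_n}f(q^{k_1},\dots,q^{k_n}).
\]
On the left-hand side of the lemma, writing $\ell=\mu+\delta_n$ sets up a bijection between partitions $\mu\in\Par_n$ and \emph{strictly} decreasing tuples $\ell_1>\ell_2>\cdots>\ell_n\ge0$ of nonnegative integers, since adding $\delta_n$ turns each weak inequality $\mu_i\ge\mu_{i+1}$ into a strict one and is reversible, and $|\mu+\delta_n|=\ell_1+\cdots+\ell_n$. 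Thus the left-hand side equals $\sum_{\ell_1>\cdots>\ell_n\ge0}q^{\ell_1+\cdots+\ell_n}f(q^{\ell_1},\dots,q^{\ell_n})$. The two sums differ only in that the integral produces the weak chain $k_1\ge\cdots\ge k_n$ whereas the partition sum produces the strict chain $\ell_1>\cdots>\ell_n$; but whenever $k_i=k_{i+1}$ we have $q^{k_i}=q^{k_{i+1}}$, so the hypothesis $f(x_1,\dots,x_n)=0$ when two arguments coincide annihilates exactly the extra terms. Hence only strictly decreasing indices survive on the right, and the two expressions coincide.

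I expect the only genuine obstacle to be bookkeeping: keeping the nested substitutions and their partial-sum exponents straight across the $n$ layers. This is cleanest to organize either by the explicit substitution $k_i=\sum_{r\ge i}j_r$ above, or equivalently by a short induction on $n$ that integrates out the variable $x_1$ first and absorbs it into the upper limit $x_2$. Everything else is a mechanical unfolding of the definitions, with the vanishing condition on $f$ playing the single decisive role of converting weak inequalities into strict ones.
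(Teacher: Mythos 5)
Your proof is correct: the paper itself gives no argument for this lemma (it is quoted from Kim--Stanton, Lemma 4.3), and your direct unfolding of the iterated $q$-integral --- reindexing via $k_i=j_i+\cdots+j_n$ to pass from nonnegative tuples to weakly decreasing chains, identifying $\mu+\delta_n$ with strictly decreasing chains, and using the vanishing hypothesis to kill the terms with repeated coordinates --- is exactly the standard proof of this identity. The only point worth making explicit is that the rearrangement of the multiple sum is justified by the absolute convergence assumed in the definition of the $q$-integral.
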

Combining \eqref{eqn:ssytgf1} and \eqref{eqn:rstgf} with Lemma \ref{lem:gf} gives a formula for the generating function of semistandard Young tableaux or row strict tableaux of shifted shapes.
This method is used throughout the paper to derive the generating functions of semistandard Young tableaux of certain skew shapes. 

We finish this section by showing that MacMahon's box theorem \eqref{eq:MBT} can be obtained from Theorem~\ref{thm:KS_8.7}. 
The idea is to consider the reverse plane partitions of a shifted shape $(\delta_n)^*$ with a suitable choice of the reverse diagonal entries so that some entries are forced to be fixed. This technique will be used several times in this paper. Now we restate 
MacMahon's box theorem and give a proof using this technique.

\begin{thm}
We have
\[
\sum_{\substack{T\in\RPP(b^a)\\\max(T)\le c}} q^{|T|} = 
\prod_{i=1}^a \prod_{j=1}^{b}\prod_{k=1}^{c}\frac{1-q^{i+j+k-1}}{1-q^{i+j+k-2}}.
\]
\end{thm}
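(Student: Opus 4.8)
The plan is to realize rectangular reverse plane partitions as a ``slice'' of reverse plane partitions of a shifted staircase, to which Theorem~\ref{thm:KS_8.7} applies directly. Set $n=a+b$ and take $\lambda=\emptyset$ (the zero partition) in Theorem~\ref{thm:KS_8.7}, so that the relevant shape is the shifted staircase $(\delta_{n+1})^\ast=\{(i,j):1\le i\le j\le n\}$. In an RPP of this shape the diagonal entries $d_i=T(i,i)$ satisfy $d_1\le\cdots\le d_n$, and every cell obeys $d_i\le T(i,j)\le d_j$ because $d_i$ is the smallest entry of its row and $d_j$ is the largest entry of its column. Hence fixing the reverse diagonal to $\mu=(c^b,0^a)$, that is, $d_i=0$ for $i\le a$ and $d_i=c$ for $i>a$, forces $T(i,j)=0$ whenever $j\le a$ and $T(i,j)=c$ whenever $i>a$, while leaving the cells with $i\le a<j$ free.

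First I would check that these free cells form exactly an RPP of shape $(b^a)$ with entries in $\{0,1,\dots,c\}$: they occupy rows $1,\dots,a$ and columns $a+1,\dots,n$, the row/column monotonicity of the staircase restricts to the usual RPP conditions on the rectangle, and conversely any such rectangular RPP extends uniquely to the staircase because the boundary values $0$ and $c$ are compatible with weak monotonicity. Since the forced $c$-cells are precisely the $\binom{b+1}{2}$ cells $(i,j)$ with $a<i\le j\le n$ and the forced $0$-cells contribute nothing, this bijection gives
\[
\sum_{\substack{T\in\RPP((\delta_{n+1})^\ast)\\ \rdiag(T)=(c^b)}} q^{|T|}
= q^{c\binom{b+1}{2}} \sum_{\substack{R\in\RPP(b^a)\\ \max(R)\le c}} q^{|R|}.
\]

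Next I would evaluate the left-hand side via Theorem~\ref{thm:KS_8.7} with $\lambda=\emptyset$, which turns it into $q^{-\nn(\delta_{n+1})}q^{|\mu+\delta_n|}\overline{a}_{\delta_n}(q^{\mu+\delta_n})/\prod_{j=1}^n(q;q)_{n-j}$. The exponent vector $\mu+\delta_n$ splits into the two contiguous blocks $\{c+a,\dots,c+a+b-1\}$ and $\{0,\dots,a-1\}$, so the Vandermonde $\overline{a}_{\delta_n}(q^{\mu+\delta_n})=\overline{\Delta}(q^{\mu+\delta_n})$ factors into within-block parts (two staircase Vandermondes) and a between-block part. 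A short reindexing shows the between-block part produces exactly $\prod_{i=1}^a\prod_{j=1}^b(1-q^{c+i+j-1})$, which is the numerator of MacMahon's product after the telescoping $\prod_{k=1}^c\frac{1-q^{i+j+k-1}}{1-q^{i+j+k-2}}=\frac{1-q^{c+i+j-1}}{1-q^{i+j-1}}$.

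Finally, dividing by $q^{c\binom{b+1}{2}}$ and collecting the within-block Vandermondes, the powers of $q$, and the factors $\prod_{j=1}^n(q;q)_{n-j}$ should leave precisely $\prod_{i=1}^a\prod_{j=1}^b(1-q^{i+j-1})^{-1}$, yielding the claimed product. The main obstacle is this last bookkeeping step: matching the within-block Vandermondes and the $(q;q)$ factors against the denominator $\prod_{i,j}(1-q^{i+j-1})$ and verifying that all spurious powers of $q$ cancel. I expect to handle it by tracking the total $q$-degree of each side and comparing the remaining factors one by one, which is routine once the between-block numerator has been identified.
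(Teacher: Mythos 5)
Your proposal is correct and follows essentially the same route as the paper: both fix the reverse diagonal of an RPP of the shifted staircase $(\delta_{a+b+1})^\ast$ to $(c^b,0^a)$, observe that this forces the two triangular blocks to be constant ($0$ and $c$) leaving a free $a\times b$ rectangle, and then apply Theorem~\ref{thm:KS_8.7} with $\lambda=\emptyset$ and simplify the resulting Vandermonde. The paper likewise leaves the final factorization of $\overline{\Delta}(q^{(c^b,0^a)+\delta_{a+b}})$ as a routine simplification, and your identification of the between-block factors with the numerator $\prod_{i,j}(1-q^{c+i+j-1})$ is the right way to carry it out.
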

\begin{proof}
Let $R\in \RPP((\delta_{a+b+1})^*)$ with $\rdiag(R)=(c^b,0^a)$. Observe that the entries in the triangular region $\{(i,j):1\le i\le j\le a\}$ are all equal to $0$
and  the entries in the triangular region $\{(i,j):a+1\le i\le j\le a+b\}$ are all equal to $c$. Let $T$ be the reverse plane partition obtained from $R$ by removing these fixed entries. Then the map $R\mapsto T$ gives a bijection
from $\{R\in\RPP((\delta_{a+b+1})^*):\rdiag(R)=(c^b,0^a)\}$ to 
$\{T\in\RPP(b^a):\max(T)\le c\}$.
Thus, by Theorem~\ref{thm:KS_8.7},
\begin{align*}
\sum_{\substack{T\in\RPP(b^a)\\\max(T)\le c}} q^{|T|} &= 
q^{-c\binom{b+1}2}\sum_{\substack{R\in\RPP((\delta_{a+b+1})^*)\\\rdiag(R)=(c^b,0^a) }} q^{|R|}\\
& = q^{-c\binom{b+1}2}
\frac{q^{-\nn(\delta_{a+b+1})}}{\prod_{j=1}^{a+b} (q;q)_{a+b -j}}q^{|(c^b,0^a)+\delta_{a+b+1}|} \overline{a}_{\delta_{a+b+1}}(q^{(c^b,0^a)+\delta_{a+b+1}}).
\end{align*}
Since 
\[
\overline{a}_{\delta_{a+b+1}}(x_1,\dots,x_{a+b+1})=\overline{\Delta}(x_1,\dots,x_{a+b+1}) = \prod_{1\le i<j\le a+b+1}(x_j-x_i),
\]
we can simplify the above formula to obtain the theorem.
\end{proof}

\section{Number of standard Young tableaux of certain skew shape}
\label{sec:con1}

In this section we prove Theorem~\ref{thm:con1}, which is restated below.

\begin{figure}

\begin{tikzpicture}
\draw[very thick] (0,0)--(0,2.45)--(.35,2.45)--(.35,2.8)--(1.75,2.8)--(1.75,3.85)--(2.1,3.85)--(2.1,3.85)--(2.1,4.2)--(4.9,4.2)--(4.9,1.4)--(3.15,1.4)--(3.15,0)--(0,0)--cycle;
\draw[dashed] (0,1.4)--(3.15,1.4);
\draw[dashed] (1.75,2.8)--(4.9,2.8);
\draw[dashed] (1.75,0)--(1.75,2.8);
\draw[dashed] (3.15,1.4)--(3.15,4.2);

\draw[dashed] (0,2.5)--(0,4.2)--(2.1,4.2);
\draw[dashed] (0,2.8)--(0.35,2.8);
\draw[dashed] (1.75,4.2)--(1.75,3.9);
\draw[snake=brace] (0, 4.3)--(1.7, 4.3);
\draw[snake=brace] (1.75, 4.3)--(3.1, 4.3);
\draw[snake=brace] (3.2,4.3)--(4.85,4.3);

\draw[snake=brace] (-.1,2.85)--(-.1, 4.2);
\draw[snake=brace] (-.1,1.45)--(-.1, 2.75);
\draw[snake=brace] (-.1,0)--(-.1, 1.35);
\node (a) at (-.4, 2.1) {$n$};
\node (b) at (.9, 4.6) {$c$};
\node (c) at (-.4, 3.5) {$a$};
\node (d) at (4, 4.6) {$b$};
\node (d) at (2.4, 4.6) {$n$};
\node (e) at (-.4, .7) {$d$};
\end{tikzpicture}
\qquad
\begin{tikzpicture}

\draw[dashed] (0,1.4)--(3.15,1.4);
\draw[dashed] (0,2.8)--(4.9,2.8);
\draw[dashed] (1.75,0)--(1.75,2.8);
\draw[dashed] (3.15,1.4)--(3.15,4.2);
\draw[dashed] (0,0)--(0,4.2)--(2.1,4.2);
\draw[dashed] (1.75,4.2)--(1.75,3.9);

\draw[very thick, blue] (1.75,2.8)--(1.75,3.85)--(1.75,4.2)--(4.9,4.2)--(4.9,1.4)--(3.15,1.4)--(2.8,1.4)--(2.8,1.75)--(2.45,1.75)--(2.45,2.1)--(2.1,2.1)--(2.1,2.45)--(1.75,2.45)--cycle;
\draw[very thick,red] (0,0)--(3.15,0)--(3.15,1.75)--(2.8,1.75)--(2.8,2.1)--(2.45,2.1)--(2.45,2.45)--(2.1,2.45)--(2.1,2.8)--(0,2.8)--cycle;

\draw[snake=brace] (0, 4.3)--(1.7, 4.3);
\draw[snake=brace] (1.75, 4.3)--(3.1, 4.3);
\draw[snake=brace] (3.2,4.3)--(4.85,4.3);

\draw[snake=brace] (-.1,2.85)--(-.1, 4.2);
\draw[snake=brace] (-.1,1.45)--(-.1, 2.75);
\draw[snake=brace] (-.1,0)--(-.1, 1.35);
\node (a) at (-.4, 2.1) {$n$};
\node (b) at (.9, 4.6) {$c$};
\node (c) at (-.4, 3.5) {$a$};
\node (d) at (4, 4.6) {$b$};
\node (d) at (2.4, 4.6) {$n$};
\node (e) at (-.4, .7) {$d$};
\end{tikzpicture}
\caption{The skew shape $\rho$ is the  diagram on the left. The skew shape $\rho^r$ is the blue diagram on the right.
The skew shape $\rho^l$ is the red diagram on the right.}\label{fig:decomp}
\end{figure}
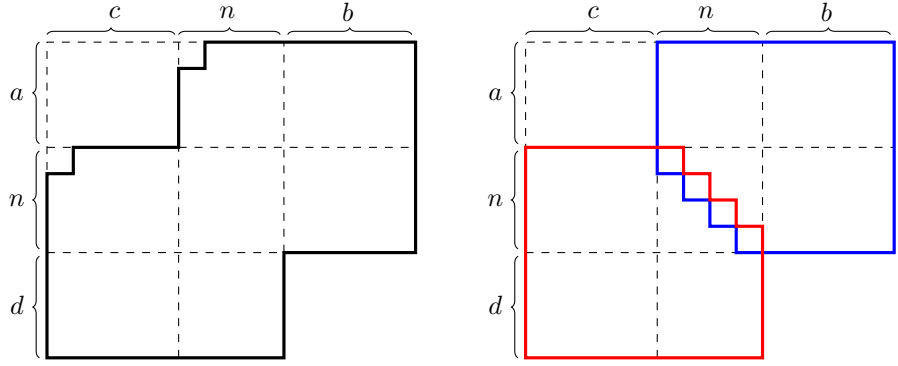


\begin{thm}\label{thm:rho}
Let $\lambda = ((n+b+c)^{n+a},(n+c)^{d})$, $\mu=((c +1), c ^{a -1},1)$ and $\rho=\lambda/\mu$. Then
\begin{multline*}
f^\rho = |\rho|! \frac{\PPhi(n)\PPhi(a)\PPhi(b)\PPhi(c)\PPhi(d)\PPhi(n+a+c)\PPhi(n+b+d)\PPhi(n+a+b+c+d)}{\PPhi(a+c)\PPhi(b+d)\PPhi(n+a+b)\PPhi(n+c+d)\PPhi(2n+a+b+c+d)}\\
\times \frac{ac \big( n(n+a+b+c+d)(n(n+a+b+c+d)+(a+c)(b+d))+bd (a+c-1)(a+c+1) \big)}{(a+c-1)(a+c +1)}.
\end{multline*}
\end{thm}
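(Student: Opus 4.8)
The plan is to reach $f^\rho$ through the reverse plane partition generating function, exploiting the decomposition $\rho=\rho^l\cup\rho^r$ displayed in Figure~\ref{fig:decomp}. By Lemma~\ref{lem:SYT_RPP} it suffices to evaluate $\lim_{q\to1}(q;q)_{|\rho|}\sum_{T\in\RPP(\rho)}q^{|T|}$. First I would cut $\rho$ along the central anti-diagonal, the staircase of length $n$ separating the red and blue regions. After a suitable anti-diagonal reflection, which interchanges the two weak-increase directions and so preserves the RPP condition, each piece is a shifted shape to which Theorem~\ref{thm:KS_8.7} applies, and the $n$ cells on the cut become the common reverse diagonal $\nu$. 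Since the two pieces share no row or column except through these diagonal cells, fixing $\nu$ decouples the monotonicity constraints, so
\[
\sum_{T\in\RPP(\rho)}q^{|T|}=\sum_{\nu\in\Par_n}q^{-|\nu|}\Bigg(\sum_{\substack{S\in\RPP(\rho^r)\\\rdiag(S)=\nu}}q^{|S|}\Bigg)\Bigg(\sum_{\substack{U\in\RPP(\rho^l)\\\rdiag(U)=\nu}}q^{|U|}\Bigg),
\]
where the factor $q^{-|\nu|}$ corrects for the diagonal being counted in both pieces.

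Next I would encode the $a$ extra rows and $b$ extra columns of $\rho^r$, and symmetrically the $c,d$ data of $\rho^l$, by embedding each piece into a larger shifted staircase and fixing part of the reverse diagonal to a constant, exactly as in the proof of MacMahon's box theorem given above; the remaining free part of the diagonal is $\nu$. Applying Theorem~\ref{thm:KS_8.7} to both pieces writes each inner sum as a constant multiple of $q^{|\nu+\delta_n|}\overline{a}_{\lambda^{(r)}+\delta_n}(q^{\nu+\delta_n})$ and $q^{|\nu+\delta_n|}\overline{a}_{\lambda^{(l)}+\delta_n}(q^{\nu+\delta_n})$ for suitable partitions $\lambda^{(r)},\lambda^{(l)}$. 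The product of the two alternants vanishes whenever two variables coincide, so Lemma~\ref{lem:gf} converts the sum over $\nu$ into a multivariate $q$-integral of $\overline{a}_{\lambda^{(r)}+\delta_n}(x)\,\overline{a}_{\lambda^{(l)}+\delta_n}(x)$ over the simplex $0\le x_1\le\cdots\le x_n\le1$.

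Then I would pass to the limit $q\to1$. Using $\overline{a}_{\lambda+\delta_n}(x)=s_\lambda(x)\,\overline{\Delta}(x)$, the integrand becomes $s_{\lambda^{(r)}}(x)\,s_{\lambda^{(l)}}(x)\prod_{1\le i<j\le n}(x_j-x_i)^2$, in which the rectangular parts of $\lambda^{(r)},\lambda^{(l)}$ contribute powers of $\prod_i x_i$ while the reverse-diagonal entries fixed near $q^0=1$ contribute powers of $\prod_i(1-x_i)$. The $q$-integral therefore tends to a Selberg-type integral
\[
\int_{0\le x_1\le\cdots\le x_n\le1}\prod_{i=1}^n x_i^{A}(1-x_i)^{B}\prod_{1\le i<j\le n}(x_j-x_i)^2\,dx_1\cdots dx_n
\]
multiplied by a low-degree symmetric polynomial coming from the irregular $+1$ shifts in $\mu=(c+1,c^{a-1},1)$. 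Such integrals are evaluable by Selberg's formula together with its Aomoto-type extension for integrals carrying a symmetric-function insertion. Tracking the powers of $(1-q)$ from $(q;q)_{|\rho|}\sim(1-q)^{|\rho|}|\rho|!$, from the prefactors supplied by Theorem~\ref{thm:KS_8.7}, and from the $1/(1-q)^n$ in Lemma~\ref{lem:gf}, I expect these to cancel to a finite nonzero limit equal to $f^\rho$.

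The main obstacle is this last stage. The integrand is not a pure Selberg weight: it carries the two Schur factors $s_{\lambda^{(r)}}s_{\lambda^{(l)}}$ together with the degree-two correction forced by the nonrectangular inner shape $\mu$, so its value is a short linear combination of Selberg/Aomoto evaluations rather than a single product. Assembling these contributions and collapsing the resulting mass of factorials into the stated product of $\PPhi$'s is delicate, and in particular reproducing the irreducible numerator $ac\big(n(n+a+b+c+d)(n(n+a+b+c+d)+(a+c)(b+d))+bd(a+c-1)(a+c+1)\big)$ over $(a+c-1)(a+c+1)$ requires combining the two terms of that linear combination precisely; this non-factoring quadratic in $n$ is exactly the signature that no single Selberg evaluation can suffice, and it is where I would expect the bookkeeping to be hardest.
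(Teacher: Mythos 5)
Your strategy is, in outline, exactly the paper's: the same cut of $\rho$ along the diagonal staircase into the two halves $\rho^r$ and $\rho^l$ of Figure~\ref{fig:decomp}, the same convolution $\sum_{\nu\in\Par_n}q^{-|\nu|}(\cdots)(\cdots)$ over the shared diagonal, the same use of Theorem~\ref{thm:KS_8.7} after padding the skewed corner of each half with constant entries, Lemma~\ref{lem:gf} to pass to a $q$-integral over the simplex, and a $q\to1$ limit evaluated by a Selberg integral with a Schur-function insertion (the paper uses Warnaar's $k=1$ evaluation, which plays the role of the ``Aomoto-type extension'' you invoke). One small correction: the reflection that turns $\rho^l$ into a shifted shape is the transpose, i.e.\ reflection in the \emph{main} diagonal, which exchanges the two weak-increase directions; a genuine anti-diagonal reflection reverses them and destroys the RPP condition.

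The one genuine gap is that your pipeline, as described, never accounts for the two cells of $\mu=(c+1,c^{a-1},1)$ lying outside the rectangle $c^a$, namely $(1,c+1)$ and $(a+1,1)$. Embedding each half into a shifted staircase and freezing part of the reverse diagonal, ``exactly as in the proof of MacMahon's box theorem,'' computes the generating function for $\lambda/(c^a)$; the integrand is then a pure Selberg weight and the answer a single product, so nothing in your argument can produce the non-factoring quadratic that you correctly observe must appear. The paper's device is to keep the corner cell of each half and force its entry to equal $0$ by inclusion--exclusion: writing $f(a,b,n,\nu,t)$ for the generating function of fillings with all entries $\ge t$, the half with its corner removed has generating function $f(a,b,n,\nu,0)-f(a,b,n,\nu,1)$. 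Each such difference vanishes to first order at $q=1$ --- supplying exactly the extra power of $1-q$ that matches the removed cell --- and $\lim_{q\to1}(1-q)^{-1}\bigl(\prod_i(qx_i;q)_a-q^{a(n+b)}\prod_i(x_i;q)_a\bigr)=a\prod_i(1-x_i)^a\bigl(\sum_j\tfrac{x_j}{1-x_j}+n+b\bigr)$. The product of the two resulting linear factors, expanded by Pieri's rule into $s_{(2^{n-2},1,1)}+s_{(2^{n-1})}$ plus lower terms, is precisely the ``low-degree symmetric polynomial'' you anticipate but never derive. With this mechanism supplied, the remaining bookkeeping is the four-term Warnaar evaluation carried out in the paper, and your plan goes through.
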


\begin{proof}
Consider the upper-right half part $\rho^r$ and the lower-left half part $\rho^l$ of $\rho$ divided at the main diagonal with the dented square part filled  as shown in Figure~\ref{fig:decomp}. In other words, $\rho^r = \kappa(a,b,n)$ and $\rho^l = \kappa(c,d,n)'$, where
\[
\kappa(a,b,n) = (n+a+b,n+a+b-1,\dots, b+1)^\ast/(\delta_{a+1})^\ast
\]

Define $f(a,b,n,\mu, t)$ to be the generating function of RPP's $T$ of shape $\kappa(n,a,b)$ such that $\rdiag(T)=\mu$ and
the value in the top-left corner cell is at least $t$ (hence, all the values are 
greater than or equal to $t$), i.e.,
\[
f(a,b,n,\mu, t) := \sum_{\substack{T\in \RPP(\kappa(a,b,n))\\ \rdiag(T)=\mu, ~\min(T)\ge t}}q^{|T|}
= q^{-t\binom{a+1}{2}}\sum_{\substack{T' \in \RPP((\delta_{n+a+1}+(b^{n+a}))^\ast)\\ \rdiag(T')=(\mu_1,\dots, \mu_n, t,\dots, t)}} q^{|T'|}.
\]
The right hand side of the above equation is obtained by filling the skewed part $(\delta_{a+1})^\ast$ by $t$'s.  
By applying Theorem \ref{thm:KS_8.7}, we get 
\begin{align*}
&f(a,b,n,\mu,t) \\
&= \frac{q^{-t\binom{a+1}{2}-\nn(\delta_{n+a+1}+b^{n+a})+|\mu+a\cdot t+\delta_n|}}{\prod_{j=1}^{n+a}(q;q)_{n+a+b-j}}\overline{a}_{(b^{n+a})+\delta_{n+a}}(q^{\mu_1 +n+a-1},\dots, q^{\mu_n +a},q^{t+a-1},\dots, q^t)\\
&= \frac{q^{|\mu|-t\binom{a}{2}-\nn(\delta_{n+a+1})-a\binom{n+a}{2}+\binom{n+a}{2}+b(|\mu|+a(n+t)+\binom{a}{2}+\binom{n}{2})}}{\prod_{j=1}^{n+a}(q;q)_{n+a+b-j}}
\overline{\Delta}(q^{\mu_1 +n+a-1},\dots, q^{\mu_n +a},q^{t+a-1},\dots, q^t)\\
&= \frac{q^{(b+1)|\mu|+at(n+b)+B(a,b,n)}}{\prod_{j=1}^{n+a}(q;q)_{n+a+b-j}}\overline{\Delta}(q^{\mu_1 +n-1},\dots, q^{\mu_n })\overline{\Delta}(q^{a-1},\dots, q^0)\prod_{i=1}^n (q^{\mu_i +n-i}q^{1-t};q)_a,
\end{align*}
where $B(a,b,n)=-\nn(\delta_{n+a+1})+(1-a)\binom{n+a}{2}+abn+(n+b)\binom{a}{2}+(a+b)\binom{n}{2}$. 
Note that
$$
\sum_{\substack{T\in \RPP(\rho^r)\\ \rdiag(T)=\mu, ~\min(T)=0}}q^{|T|} = f(a,b,n,\mu, 0) - f(a,b,n,\mu, 1).
$$
Hence, 
\begin{align*}
&\sum_{T\in \RPP(\rho)}q^{|T|}=\sum_{\mu\in\Par_n} q^{-|\mu|} 
\sum_{\substack{T\in \RPP(\rho^r)\\ \rdiag(T)=\mu, ~\min(T)=0}}q^{|T|}
\sum_{\substack{T\in \RPP(\rho^l)\\ \rdiag(T)=\mu, ~\min(T)=0}}q^{|T|}\\
&= \sum_{\mu \in \Par_n}q^{-|\mu|}( f(a,b,n,\mu, 0) - f(a,b,n,\mu, 1))( f(c,d,n,\mu, 0) - f(c,d,n,\mu, 1))\\
&= \sum_{\mu\in \Par_n} \frac{q^{(b+d+1)|\mu|+B(a,b,n)+B(c,d,n)}\overline{\Delta}(q^{a-1},\dots, q^0)\overline{\Delta}(q^{c-1},\dots, q^0) \overline{\Delta}(q^{\mu+\delta_n})^2}{\prod_{j=1}^{n+a}(q;q)_{n+a+b-j}\prod_{j=1}^{n+c}(q;q)_{n+c+d-j}}\\
&  \times\left(\prod_{i=1}^n (q^{\mu_i +n-i+1};q)_{a}-q^{a (n+b)}\prod_{i=1}^n (q^{\mu_i +n-i};q)_{a}\right) 
\left(\prod_{i=1}^n (q^{\mu_i +n-i+1};q)_{c}-q^{c (n+d)}\prod_{i=1}^n (q^{\mu_i +n-i};q)_{c}\right)\\
&= \frac{q^{B(a,b,n)+B(c,d,n)-(b+d+1)\binom{n}{2}}\overline{\Delta}(q^{a-1},\dots, q^0)\overline{\Delta}(q^{c-1},\dots, q^0)}{(1-q)^n\prod_{j=1}^{n+a}(q;q)_{n+a+b-j}\prod_{j=1}^{n+c}(q;q)_{n+c+d-j} }\\
& \qquad\times \int_{0\le x_1\le \cdots \le x_n\le 1} \prod_{i=1}^n x_i ^{b+d}\overline{\Delta}(X)^2
\left(\prod_{i=1}^n (q x_i ;q)_{a}-q^{a (n+b)}\prod_{i=1}^n (x_i ;q)_{a}\right)\\
& \qquad\qquad\qquad \qquad\qquad\qquad\times \left(\prod_{i=1}^n (q x_i ;q)_{c}-q^{c (n+d)}\prod_{i=1}^n (x_i ;q)_{c}\right)d_q x_1 \cdots d_q x_n,
\end{align*}
where $X=(x_1,\dots, x_n )$ and the last identity follows from Lemma~\ref{lem:gf}. By Lemma~\ref{lem:SYT_RPP}, 
\begin{align}
f^{|\rho|}&=\lim_{q\rightarrow 1}  \frac{q^{B(a,b,n)+B(c,d,n)-(b+d+1)\binom{n}{2}} (q;q)_{|\rho|}\overline{\Delta}(q^{a-1},\dots, q^0)\overline{\Delta}(q^{c-1},\dots, q^0)}{(1-q)^n\prod_{j=1}^{a+n}(q;q)_{n+a+b-j}\prod_{j=1}^{n+c}(q;q)_{n+c+d-j} }\nonumber\\
& \qquad\times \int_{0\le x_1\le \cdots \le x_n\le 1} \prod_{i=1}^n x_i ^{b+d}\overline{\Delta}(X)^2
\left(\prod_{i=1}^n (q x_i ;q)_{a}-q^{a (n+b)}\prod_{i=1}^n (x_i ;q)_{a}\right)\nonumber\\
& \qquad\qquad\qquad \qquad\qquad\qquad\times \left(\prod_{i=1}^n (q x_i ;q)_{c}-q^{c (n+d)}\prod_{i=1}^n (x_i ;q)_{c}\right)d_q x_1 \cdots d_q x_n\nonumber\\
&=|\rho|! \frac{\PPhi(a)\PPhi(b)\PPhi(c)\PPhi(d)}{\PPhi(n+a+b)\PPhi(n+c+d)}  \int_{0\le x_1\le \cdots \le x_n\le 1} \prod_{i=1}^n x_i ^{b+d}\overline{\Delta}(X)^2\nonumber\\
&\qquad\qquad\qquad\qquad \times \lim_{q\rightarrow 1}\frac{\prod_{i=1}^n (q x_i ;q)_{a}-q^{a (n+b)}\prod_{i=1}^n (x_i ;q)_{a}}{1-q}\nonumber\\
&\qquad\qquad\qquad\qquad \times \lim_{q\rightarrow 1}\frac{\prod_{i=1}^n (q x_i ;q)_{c}-q^{c (n+d)}\prod_{i=1}^n (x_i ;q)_{c}}{1-q} dx_1 \cdots dx_n.\label{eqn:syt}
\end{align}
Let us calculate the limits separately:
\begin{align*}
&  \lim_{q\rightarrow 1}\frac{\prod_{i=1}^n (q x_i ;q)_{a}-q^{a (n+b)}\prod_{i=1}^n (x_i ;q)_{a}}{1-q}\\
& = \lim_{q\rightarrow 1} \prod_{i=1}^n (q x_i ;q)_{a-1}\frac{\prod_{i=1}^n (1-q^a x_i)-q^{a(n+b)}\prod_{i=1}^n (1-x_i)}{1-q}\\
&= \prod_{i=1}^n (1-x_i)^{a-1} \lim_{q\rightarrow 1}  \frac{\prod_{i=1}^n (1-q^a x_i)\sum_{j=1}^n \frac{-a q^{a-1}x_j}{1-q^a x_j}-a(b+n)q^{a(n+b)-1}\prod_{i=1}^n (1-x_i)}{-1}\\
&=a\prod_{i=1}^n (1-x_i)^a \left(\sum_{j=1}^n \frac{ x_j}{1-x_j}+(n+b) \right).
\end{align*}
Now we compute the integral part applying this limit computation, with the change of variables $x_i \mapsto 1-x_i$. Then we have 
\begin{align*}
& \int_{0\le x_1\le \cdots \le x_n\le 1} \prod_{i=1}^n x_i ^{b+d}\overline{\Delta}(X)^2
 \lim_{q\rightarrow 1}\frac{\prod_{i=1}^n (q x_i ;q)_{a}-q^{a (n+b)}\prod_{i=1}^n (x_i ;q)_{a}}{1-q}\\
&\qquad\qquad\qquad\qquad \times  \lim_{q\rightarrow 1}\frac{\prod_{i=1}^n (q x_i ;q)_{c}-q^{c (n+d)}\prod_{i=1}^n (x_i ;q)_{c}}{1-q} dx_1 \cdots dx_n\\
& = a c \int_{0\le x_1\le \cdots \le x_n\le 1} \prod_{i=1}^n x_i ^{a+c}(1-x_i )^{b+d}\overline{\Delta}(X)^2\\
&\qquad\qquad\qquad\qquad \times \left( \sum_{j=1}^n \frac{1-x_j}{x_j}+(n+b) \right) \left( \sum_{j=1}^n \frac{1-x_j}{x_j}+(n+d) \right)dx_1 \cdots dx_n\\
& = a c  \int_{0\le x_1\le \cdots \le x_n\le 1} \prod_{i=1}^n x_i ^{a+c}(1-x_i )^{b+d}\overline{\Delta}(X)^2\\
& \qquad\qquad\qquad\qquad \times \left( \prod_{i=1}^n x_i ^{-1} s_{(1^{n-1})}(X)+b \right) \left( \prod_{i=1}^n x_i ^{-1} s_{(1^{n-1})}(X)+d \right)dx_1 \cdots dx_n\\
&= a c   \int_{0\le x_1\le \cdots \le x_n\le 1}s_{(2^{n-2},1,1)}(X) \prod_{i=1}^n x_i ^{a+c-2}(1-x_i )^{b+d}\overline{\Delta}(X)^2dx_1 \cdots dx_n\\
& \quad +  a c   \int_{0\le x_1\le \cdots \le x_n\le 1}s_{(2^{n-1})}(X) \prod_{i=1}^n x_i ^{a+c-2}(1-x_i )^{b+d}\overline{\Delta}(X)^2dx_1 \cdots dx_n\\
& \quad +  a c  (b+d) \int_{0\le x_1\le \cdots \le x_n\le 1}s_{(1^{n-1})}(X) \prod_{i=1}^n x_i ^{a+c-1}(1-x_i )^{b+d}\overline{\Delta}(X)^2dx_1 \cdots dx_n\\
& \quad +  a b c d   \int_{0\le x_1\le \cdots \le x_n\le 1}\prod_{i=1}^n x_i ^{a+c}(1-x_i )^{b+d}\overline{\Delta}(X)^2dx_1 \cdots dx_n.
\end{align*}
Here, Pieri's rule is used for $X=(x_1,\dots,x_n)$ :
\[
s_{(1^{n-1})}(X) ^2 = s_{(1^{n-1})}(X) e_{n-1}(X)
= s_{(2^{n-2},1,1)}(X) +s_{(2^{n-1})}(X).
\]

To compute the last four Selberg-type integrals, we note the result of Warnaar \cite[Corollary 1.3]{Warnaar2005}, when $k=1$ and $q\rightarrow 1$: 
\begin{multline*}
\qquad\qquad\int_{[0,1]^n}s_\lambda (X) \prod_{i=1}^n x_i ^{\alpha -1} (1-x_i)^{\beta -1} \overline{\Delta}(X) ^2 dx_1 \cdots dx_n\\
=\prod_{(i,j)\in \lambda}\frac{n-i+j}{h_\lambda (i,j)}\prod_{i=1}^n \frac{(\alpha+n-i+\lambda_i -1)! (\beta +i-2)! i!}{(\alpha+ \beta +2n-i-2+\lambda_i)!}.\qquad\qquad
\end{multline*}
 By using Warnaar's result, we get 
\begin{multline}\label{eqn:int1}
a c  \int_{0\le x_1\le \cdots \le x_n\le 1}s_{(2^{n-2},1,1)}(X) \prod_{i=1}^n x_i ^{a+c-2}(1-x_i )^{b+d}\overline{\Delta}(X)^2dx_1 \cdots dx_n\\
 = a c  \binom{n}{2} \frac{(n+a+b+c+d +1)! }{(a+c +1)! } \frac{\PPhi(n+1)\PPhi(n+a+c)\PPhi(n+b+d)\PPhi(n+a+b+c+d-1)}{\PPhi(a+c-1)\PPhi(b+d)\PPhi(2n+a+b+c+d)},
\end{multline}
\begin{multline}\label{eqn:int2}
 a c  \int_{0\le x_1\le \cdots \le x_n\le 1}s_{(2^{n-1})}(X) \prod_{i=1}^n x_i ^{a+c-2}(1-x_i )^{b+d}\overline{\Delta}(X)^2dx_1 \cdots dx_n\\
 = a c  \binom{n+1}{2} \frac{(a+c -2)! }{(n+a+b+c+d-2)! }\frac{\PPhi(n+1)\PPhi(n+a+c)\PPhi(n+b+d)\PPhi(n+a+b+c+d+1)}{\PPhi(a+c+1)\PPhi(b+d)\PPhi(2n+a+b+c+d)},
\end{multline}
\begin{multline}\label{eqn:int3}
 a c  (b+d) \int_{0\le x_1\le \cdots \le x_n\le 1}s_{(1^{n-1})}(X) \prod_{i=1}^n x_i ^{a+c-1}(1-x_i )^{b+d}\overline{\Delta}(X)^2dx_1 \cdots dx_n\\
 = ac n(b+d) \frac{(a+c -1)! }{(n+a+b+c+d-1)!}\frac{\PPhi(n+1)\PPhi(n+a+c)\PPhi(n+b+d)\PPhi(n+a+b+c+d+1)}{\PPhi(a+c+1)\PPhi(b+d)\PPhi(2n+a+b+c+d)},
 \end{multline}
 \begin{multline}\label{eqn:int4}
a b c d  \int_{0\le x_1\le \cdots \le x_n\le 1}\prod_{i=1}^n x_i ^{a+c}(1-x_i )^{b+d}\overline{\Delta}(X)^2dx_1 \cdots dx_n\\
 = a b c d \frac{\PPhi(n+1)\PPhi(n+a+c)\PPhi(n+b+d)\PPhi(n+a+b+c+d)}{\PPhi(a+c)\PPhi(b+d)\PPhi(2n+a+b+c+d)}.
 \end{multline}
 Adding the above four results gives us  
 \begin{multline*}
 \eqref{eqn:int1}+ \eqref{eqn:int2}+ \eqref{eqn:int3}+ \eqref{eqn:int4}\\
 = \frac{a c }{(a+c -1)(a+c +1)}\frac{\PPhi(n+1)\PPhi(n+a+c)\PPhi(n+b+d)\PPhi(n+a+b+c+d)}{\PPhi(a+c)\PPhi(b+d)\PPhi(2n+a+b+c+d)}\\
  \times \big( n(n+a+b+c+d)(n(n+a+b+c+d)+(a+c)(b+d))+b d  (a+c  -1)(a+c  +1) \big).
 \end{multline*}
 We obtain the formula for $f^{|\rho|}$ by replacing the integration part in \eqref{eqn:syt} by the above computation.
\end{proof}

\section{Enumeration of standard Young tableaux of shifted skew shape}\label{sec:conj2}

In this section we prove Conjecture~\ref{conj:MPP3-2}, which is restated below.

\begin{thm}\label{thm:pi}
For $\pi=\VV(n,a,b,m)$, the number $g^\pi$ of standard Young tableaux of shape $\pi$ is 
\begin{equation*}
g^\pi = \frac{|\pi|!}{2^a} \cdot \frac{\PPhi(n+2a)\PPhi(a)}{\PPhi(2a)\PPhi(n+a)} \cdot \frac{\gimel(2a)\gimel(n)}{\gimel(n+2a)}
\prod_{(i,j)\in \lambda\backslash D}\frac{1}{h_{\lambda^*} (i,j)},
\end{equation*}
where $\gimel (n) = \prod_{i=1}^{\flr{n/2}} (n-2i)!$,
$\lambda = ((n+a+b, n+a+b-1,\dots, b+1)+(m-1) \delta_{n+a})^\ast$ and $D$ is the set of cells $(i,n+j)$ with $1\le i\le j\le n$.
\end{thm}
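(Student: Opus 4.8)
The plan is to adapt the strategy used in the proof of Theorem~\ref{thm:rho} to the shifted setting. The shape $\pi=\VV(n,a,b,m)$ is already a shifted skew shape, so unlike the previous case there is no need to split it along a diagonal; instead I want to realize the RPPs of $\pi$ directly as RPPs of a larger staircase-plus-partition shape $(\delta_{N+1}+\nu)^\ast$ with a prescribed reverse diagonal, so that Theorem~\ref{thm:KS_8.7} applies. Concretely, I would embed $\lambda$ (given in \eqref{eq:lambda}) into the appropriate $(\delta_{n+a+1}+\kappa)^\ast$ and force the cells of the removed part $\mu^\ast=(\delta_{a+1})^\ast$, together with the triangular region $D=\{(i,n+j):1\le i\le j\le n\}$, to carry fixed values by choosing the reverse diagonal entries suitably. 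This mimics exactly the bookkeeping trick (filling the skewed corner by a constant $t$ and the triangular block by a partition such as $(c^b,0^a)$) that was used both in the MacMahon box theorem proof and in the definition of $f(a,b,n,\mu,t)$.

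\textbf{Key steps.} First I would express $\sum_{T\in\RPP(\pi)}q^{|T|}$ as a sum over $\mu\in\Par_n$ of the RPP generating function of the ambient shifted shape with $\rdiag$ fixed by $\mu$, applying Theorem~\ref{thm:KS_8.7} to write each summand in terms of $\overline{a}_{\kappa+\delta}(q^{\mu+\delta})$, i.e.\ essentially a Vandermonde $\overline{\Delta}$. Second, using Lemma~\ref{lem:gf} I would convert the resulting $q$-sum over $\mu$ into a multivariate $q$-integral over the simplex $0\le x_1\le\dots\le x_n\le1$; because of the parameter $m$ appearing through the dilation $(m-1)\delta_{n+a}$, the integrand will carry an exponent of the form $x_i^{\text{linear in }m}$, and here I expect to invoke Lemma~\ref{lem:cov} to absorb the $m$-dependence via the substitution $p=q^m$, reducing to a standard integral. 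Third, I would take the $q\to1$ limit via Lemma~\ref{lem:SYT_RPP}, turning the $q$-integral into an ordinary Selberg-type integral; the difference-quotient limits (as in the $\frac{\prod(qx_i;q)_a-q^{a(n+b)}\prod(x_i;q)_a}{1-q}$ computation) will again produce a factor involving $\prod(1-x_i)^{a}$ times a symmetric correction term. Fourth, I would evaluate the resulting integral using Warnaar's formula \cite[Corollary~1.3]{Warnaar2005}, exactly as in \eqref{eqn:int1}--\eqref{eqn:int4}.

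\textbf{The final and hardest step} is matching the closed form coming out of the Selberg/Warnaar evaluation against the conjectured right-hand side, which is written in the very different-looking form $\frac{|\pi|!}{2^a}\cdot\frac{\PPhi(n+2a)\PPhi(a)}{\PPhi(2a)\PPhi(n+a)}\cdot\frac{\gimel(2a)\gimel(n)}{\gimel(n+2a)}\prod_{(i,j)\in\lambda\backslash D}\frac{1}{h_{\lambda^*}(i,j)}$. This requires rewriting the product of shifted hook lengths $\prod h_{\lambda^*}(i,j)$ over $\lambda\backslash D$, which by the definition of $h_{\lambda^*}$ splits into diagonal contributions ($n+1+\mu_i$), strictly-upper off-diagonal contributions ($\mu_i+\mu_j+2(n+1)-i-j$), and ordinary hook lengths $h_\mu(i,j-n)$ for the $j>n$ cells. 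The appearance of the double factorials packaged in $\gimel$ strongly suggests that the off-diagonal shifted hooks telescope into a $\gimel$-type product, and I expect the main obstacle to be this purely combinatorial/algebraic reconciliation of the two product expressions, including the powers of $2$ and the $m$-dependence hidden inside $\lambda$. I would handle it by clearing everything to $\PPhi$ and $\gimel$ notation and verifying the identity of rational functions, checking the specialization $m=1$ and small cases against Conjecture~\ref{conj:MPP3-2} to guard against off-by-one errors in the index ranges.
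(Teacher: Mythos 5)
Your overall plan tracks the paper's proof quite closely: embed $\pi$ into a shifted shape with prescribed reverse diagonal (filling the removed $(\delta_{a+1})^\ast$ row by row with $0,1,\dots,a-1$), apply Theorem~\ref{thm:KS_8.7} via Corollary~\ref{cor:gfs}, convert the sum over $\mu\in\Par_n$ into a $q$-integral with Lemma~\ref{lem:gf}, substitute $p=q^m$ via Lemma~\ref{lem:cov}, pass to $q\to1$ with Lemma~\ref{lem:SYT_RPP}, and finish with a Selberg evaluation plus an algebraic matching against the conjectured product. The gap is at the evaluation step. Because $\pi$ is a single shifted shape and is \emph{not} glued from two halves along a diagonal, only one copy of the Vandermonde survives: after the change of variables the integral is
\[
\int_{0\le x_1\le\cdots\le x_n\le1}\prod_{i=1}^n x_i^{\frac{b+1}{m}-1}(1-x_i)^a\,\overline{\Delta}(x_1,\dots,x_n)\,dx_1\cdots dx_n,
\]
with $\overline{\Delta}$ to the \emph{first} power. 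This is the Selberg integral with $\gamma=1/2$, whose value involves Gamma functions at half-integer arguments; that is exactly where the double factorials packaged in $\gimel$ and the power $2^a$ come from. Warnaar's formula as invoked in \eqref{eqn:int1}--\eqref{eqn:int4} is the $\overline{\Delta}(X)^2$ (i.e.\ $\gamma=1$) case and does not apply, so citing it ``exactly as in \eqref{eqn:int1}--\eqref{eqn:int4}'' leaves the integral unevaluated. Relatedly, your guess that the $\gimel$ factors arise from telescoping the off-diagonal shifted hooks is misplaced: they come out of the half-integer Selberg evaluation, and the hook product over $\lambda\setminus D$ is reconciled afterwards by the routine (if tedious) product identity you correctly anticipate as the last step.

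Two smaller corrections. There is no difference-quotient limit of the form $\lim_{q\to1}\bigl(\prod_i(qx_i;q)_a-q^{a(n+b)}\prod_i(x_i;q)_a\bigr)/(1-q)$ here: that device in Theorem~\ref{thm:rho} enforces the matching condition created by gluing two halves along the main diagonal, and since $\VV(n,a,b,m)$ is not glued there is nothing to enforce --- the integrand is simply $\prod_i x_i^{(b-m+1)/m}(1-x_i)^a\overline{\Delta}(X)$ with no symmetric correction term, which is why the answer is a pure product with no extra polynomial factor. Also, the set $D$ plays no role in the tableau construction; only the skewed corner $(\delta_{a+1})^\ast$ is filled with forced values, and $D$ enters solely through the hook-length product on the right-hand side of the formula to be matched.
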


\begin{proof}
Firstly, by computing the shifted hook lengths of the cells in $\lambda\backslash D$ explicitly, 
we can rewrite the conjectured formula for $g^\pi$ as 
\begin{multline}\label{eqn:conjpi}
g^\pi = \frac{|\pi|!}{2^a} \cdot \frac{\PPhi(n+2a)\PPhi(r)}{\PPhi(2a)\PPhi(n+a)} \cdot \frac{\gimel(2a)\gimel(n)}{\gimel(n+2a)}\\
\times \frac{m^{\binom{n+a}{2}}\PPhi (n+a)\prod_{i=1}^a \prod_{j=0}^{i-1}(2(b+1)+(n+i+j-1)m)}{\prod_{i=0}^{n+a-1}(b+mi)! \prod_{i=0}^{n+a-1}(b+1+mi) \prod_{i=1}^{n+a-1}\prod_{j=0}^{i-1}(2(b+1)+(i+j)m)}.
\end{multline}

To utilize the generating function formula for the semistandard Young tableaux of shifted shapes 
with fixed diagonals given in Corollary \ref{cor:gfs}, we fill the top row of the skewed part $(\delta_{a+1})^\ast$ by $0$'s,  the second row by $1$'s, and so on. 
If we say we fix the diagonal cells of $\lambda$ by $\nu :=(\mu_1 +n-1, \dots, \mu_{n-1}+1,\mu_n,a-1,\dots, 1,0)$ for some $\mu\in \Par _n$, then we have 
\begin{align*}
\sum_{\substack{T\in \SSYT (\pi)\\ \rdiag(T)=\nu}}q^{|T|}&=q^{-\binom{a+1}{3}}\sum_{\substack{T\in\SSYT(\lambda)\\ \rdiag(T)=\nu }}q^{|T|}\\
&= q^{-\binom{a+1}{3}}\frac{q^{\binom{a}{2}+|\mu+\delta_n |}}{\prod_{j=1}^{n+a}(q;q)_{b+m(n+a-j)}} \overline{a}_{(b^{n+a})+m\delta_{n+a}}(q^\nu).
\end{align*}
Summing this up over all partitions with at most $n$ parts gives 
\begin{align*}
&\sum_{T\in \SSYT (\pi)}q^{|T|} =\sum_{\mu\in \Par_n}\sum_{\substack{T\in \SSYT (\pi)\\ \rdiag(T)=\nu}}q^{|T|} \\
&= \frac{q^{-\binom{a}{3}}}{(1-q)^n \prod_{i=1}^{n+a}(q;q)_{b+m(n+a-j)}}
\int_{0\le x_1 \le \cdots \le x_n \le 1}\overline{a}_{(b^{n+a})+m\delta_{n+a}}(x_1,\dots, x_n,q^{a-1},\dots, q^0)d_q X.
\end{align*}
To simplify the integrand, we note the following :
\[
 a_{(b^n) +\lambda}(x_1,\dots, x_n) = \det (x_i ^{b+\lambda_j})=\prod_{i=1}^n x_i ^b \cdot \det (x_i ^{\lambda_j})
= \prod_{i=1}^n x_i ^b\cdot  a_{\lambda}(x_1,\dots, x_n)
\]
and 
$$a_{m\lambda}(x_1,\dots, x_n) =\det (x_i ^{m\lambda_j})=a_{\lambda}(x_1 ^m,\dots, x_n ^m).
$$
Thus we can rewrite the integrand as 
$$
 \overline{a}_{(b^{a+n})+m\delta_{n+a}}(x_1,\dots, x_n, q^{a-1},\dots, q^0)
= q^{b\binom{a}{2}}\prod_{i=1}^n x_i ^{b} \cdot \overline{\Delta}(x_1 ^m ,\dots, x_n ^m, q^{m(a-1)},\dots, q^{m\cdot 0}).$$
So far we have 
\begin{multline*}
\sum_{T\in \SSYT (\pi)}q^{|T|} =
\frac{q^{b\binom{a}{2}-\binom{a}{3}}}{(1-q)^n \prod_{i=1}^{n+a}(q;q)_{b+m(n+a-j)}}\\
\times \int_{0\le x_1 \le \cdots \le x_n\le 1 }\prod_{i=1}^n x_i ^{b} \cdot \overline{\Delta}(x_1 ^m ,\dots, x_n ^m, q^{m(a-1)},\dots, q^{m\cdot 0}) d_q x_1\cdots d_q x_n.
\end{multline*}
We apply Lemma \ref{lem:cov} to make the change of variables $x_i ^m \mapsto x_i$, and by letting $p=q^m$ we get 
\begin{multline*}
\sum_{T\in \SSYT (\pi)}q^{|T|} =
\frac{q^{b\binom{a}{2}-\binom{a}{3}}}{(1-p)^n \prod_{i=1}^{n+a}(q;q)_{b+m(n+a-j)}}\\
\times \int_{0\le x_1 \le \cdots \le x_n\le 1 }\prod_{i=1}^n x_i ^{\frac{b}{m}+\frac{1-m}{m}} \cdot \overline{\Delta}(x_1 ,\dots, x_n ,p^{a-1},\dots, p^0) d_p x_1\cdots d_p x_n.
\end{multline*}
Note that 
$$\overline{\Delta}(x_1 ,\dots, x_n ,p^{a-1},\dots, p^0 )  = p^{\binom{a}{3}+n\binom{a}{2}}\prod_{i=1}^{a-1}(p;p)_i \cdot \prod_{i=1}^n (p^{1-a}x_i ;p)_a \cdot\overline{\Delta}(x_1,\dots, x_n),$$
and so we have 
\begin{multline*}
\sum_{T\in \SSYT (\pi)}q^{|T|} =
\frac{q^{b\binom{a}{2}-\binom{a}{3}+m\binom{a}{3}+mn\binom{a}{2}}\prod_{i=1}^{a-1}(q^m;q^m)_i }{(1-q^m)^n \prod_{i=1}^{n+a}(q;q)_{b+m(n+a-j)}}\\
\times \int_{0\le x_1 \le \cdots \le x_n\le 1 }\prod_{i=1}^n x_i ^{\frac{b-m+1}{m}}(p^{1-a} x_i;p)_a \cdot \overline{\Delta}(x_1 ,\dots, x_n ) d_p x_1\cdots d_p x_n.
\end{multline*}
Given this, by Lemma~\ref{lem:SYT_RPP}, we can get the number of standard Young tableaux of shape $\pi$ by 
\begin{align*}
g^\pi &= \lim_{q\rightarrow 1}\left( (q;q)_{|\pi|} \sum_{T\in \SSYT (\pi)}q^{|T|} \right)\\
&= \frac{|\pi|! \cdot m^{\binom{a}{2}}\prod_{i=1}^{a-1}i!}{m^n \prod_{j=0}^{n+a-1}(b+m j)!}
\int_{0\le x_1 \le \cdots \le x_n \le 1}\prod_{i=1}^n x_i ^{\frac{b-m+1}{m}}(1-x_i)^a \cdot \overline{\Delta}(x_1,\dots, x_n) dx_1\cdots d x_n.
\end{align*}
Notice that the integral is a special case of the well-known \emph{Selberg integral} :
\begin{align}\label{eqn:Selbergint}
&\int_{0\le x_1 \le \cdots \le x_n \le 1}\prod_{i=1}^n x_i ^{\frac{b+1}{m}-1}(1-x_i)^a \cdot \overline{\Delta}(x_1,\dots, x_n) dx_1\cdots d x_n \nonumber\\
&=\frac{1}{n!}\prod_{j=1}^n \frac{\Gamma (\frac{b+1}{m}+\frac{1}{2}(j-1))\Gamma(a+1+\frac{1}{2}(j-1))\Gamma(1+\frac{1}{2}j)}{\Gamma(\frac{b+1}{m}+a+1+\frac{1}{2}(n+j-2))\Gamma(1+\frac{1}{2})}\nonumber\\
&=\begin{cases}
\dfrac{m^{N+2N(N+a)}2^N\PPhi(2N+2a)\gimel(2a)\gimel(2N)}{\PPhi(2a)\gimel (2N+2a)\prod_{i=0}^{N-1}(b+1+mi)\prod_{j=1}^{2N}\prod_{i=0}^{N+a-1}(2(b+1)+(2i+j)m)}, &\text{ if }n=2N,\\
\dfrac{m^{(2N+1)(N+a+1)}2^{2N+1}\PPhi(2N+2a+1)\gimel(2a)\gimel(2N+1)}{\PPhi(2a)\gimel(2N+2a+1)\prod_{j=1}^{2N+1}\prod_{i=0}^{N+a}(2(b+1)+(2i+j-1)m)}, & \text{ if } n=2N+1,
\end{cases}
\end{align}
where \eqref{eqn:Selbergint} is obtained by explicitly computing the gamma function values. By replacing the integral part in $g^\pi$ by \eqref{eqn:Selbergint},
we get 
$$g^\pi = \frac{|\pi|! m^{\binom{a+n}{2}}\PPhi(a) \PPhi(n+2a)\gimel(2a)\gimel(n)}{\PPhi(2a)\gimel(n+2a)}\cdot P(a,b,n,m),$$
where 
\begin{align*}
&P(a,b,n,m)\\=
&\begin{cases}
\dfrac{2^N}{\prod_{j=0}^{2N+a-1}(b+mj)! \prod_{i=0}^{N-1}(b+1+mi)\prod_{j=1}^{2N}\prod_{i=0}^{N+a-1}(2(b+1)+(2i+j)m)}, & \text{ if } n=2N,\\
\dfrac{2^{2N+1}}{\prod_{j=0}^{2N+a}(b+mj)! \prod_{j=1}^{2N+1}\prod_{i=0}^{N+a}(2(b+1)+(2i+j-1)m)}, & \text{ if } n=2N+1.
\end{cases}
\end{align*}
By comparing this formula to \eqref{eqn:conjpi}, to verify Morales, Pak and Panova conjecture, we only need to prove 
$$P(a,b,n,m)= \frac{\prod_{i=1}^r\prod_{j=0}^{i-1}(2(b+1)+(n+i+j-1)m)}{2^a \prod_{i=0}^{n+a-1}(b+mi)! \prod_{i=0}^{n+a-1}(b+1+mi) \prod_{i=1}^{n+a-1}\prod_{j=0}^{i-1}(2(b+1)+(i+j)m)}.$$
It is not very hard to check that they are two different ways of expressing multiplications of the same set of factors. 
This finishes the proof.
\end{proof}

\section{Generalized MacMahon's box theorem using $q$-integrals}
\label{sec:GMBT}

In this section, we prove Theorem~\ref{thm:GMBT1}, which is restated as follows.

\begin{thm}\label{thm:GMBT}
For $\pi=\MM(n,a,b,c,d,1)$ and an integer $N\ge0$, we have 
\begin{multline*}
  s_\pi(1,q,q^2,\dots,q^N) =q^{\sum_{(i,j)\in \lambda/(c^a)}(\lambda_j ' -i)}
 \prod_{i=1}^{b}(q^{N-a +1+i};q)_{a} \prod_{i=1}^{d} (q^{N+2-i};q)_{c}\\
\times \prod_{i=1}^n (q^{N-n-a-d+1+i};q)_{n+a+b+c+d} \prod_{i=1}^n \prod_{j=1}^{a}\prod_{k=1}^{c}\frac{1-q^{i+j+k-1}}{1-q^{i+j+k-2}}
 \prod_{(i,j)\in \lambda\setminus(0^n,c^a)}\frac{1}{1-q^{h_\lambda(i,j)}}.
\end{multline*}
\end{thm}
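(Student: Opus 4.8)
The plan is to follow the same $q$-integral strategy developed for Theorem~\ref{thm:rho} and Theorem~\ref{thm:pi}, but now keeping $q$ finite and imposing a bound on the entries, since we want the bounded generating function $s_\pi(1,q,\dots,q^N)$ rather than its $q\to1$ limit. First I would recall that $\SSYT(\pi)$ with entries in $\{0,1,\dots,N\}$ is counted with weight $q^{|T|}$ by $s_\pi(1,q,\dots,q^N)$, and that the skew shape $\MM(n,a,b,c,d,1)$ splits along the main diagonal into an upper-right half of shifted-staircase type and a lower-left half which is the transpose of another such shape, exactly as $\rho$ was decomposed into $\rho^r$ and $\rho^l$. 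The idea is to embed $\pi$ into a single shifted diagram $(\delta_{\ell+1}+\text{something})^\ast$, fix the reverse diagonal by a parameter $\mu$ together with forced constant entries coming from the bound, and apply Corollary~\ref{cor:gfs} (the SSYT generating function with prescribed diagonal) to each half.

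Next I would sum over the diagonal partition $\mu\in\Par_n$ using Lemma~\ref{lem:gf} to convert the sum into a multivariate $q$-integral over the simplex $0\le x_1\le\cdots\le x_n\le 1$. Because $m=1$ here, no change of variables via Lemma~\ref{lem:cov} is needed, which simplifies matters considerably compared to Theorem~\ref{thm:pi}. The integrand will be a product of a monomial prefactor, a Vandermonde-type factor $\overline{\Delta}(X)^2$, and two finite $q$-products encoding the bound $N$ and the widths $a,b,c,d$; crucially the bound $N$ enters as a finite truncation so that the resulting $q$-integral is a genuine finite product rather than requiring a limit. I expect the evaluation to reduce to a $q$-analogue of the Selberg-type/Warnaar integral, but since we are at the SSYT level with bounded entries, the relevant evaluation should instead be a bounded MacMahon-type product. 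Indeed the target formula already exhibits the telescoping MacMahon factor $\prod_{i,j,k}\frac{1-q^{i+j+k-1}}{1-q^{i+j+k-2}}$ together with the hook-length product $\prod_{(i,j)\in\lambda\setminus(0^n,c^a)}(1-q^{h_\lambda(i,j)})^{-1}$ and three explicit $q$-Pochhammer factors, so I would aim to match the $q$-integral to precisely these pieces.

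An alternative and perhaps cleaner route, given that $m=1$, is to bypass the integral entirely and proceed purely combinatorially by the ``fixing the reverse diagonal'' technique used to reprove MacMahon's box theorem at the end of Section~\ref{sec:preliminaries}. That is, I would realize $\SSYT(\pi)$ with bounded entries as RPPs of a suitable shifted staircase $(\delta_{\ell+1}+\eta)^\ast$ whose reverse diagonal is forced to take constant values on the two triangular corners (determined by $N$, $c$, and the widths), apply Theorem~\ref{thm:KS_8.7} directly, and then evaluate the resulting $\overline{a}_{\eta+\delta}(q^{\nu})=\overline{\Delta}(q^\nu)$ determinant as a product. This has the advantage that Theorem~\ref{thm:KS_8.7} already delivers a closed product, so the problem becomes bookkeeping: identifying which cells are forced, converting the $\nn$-shifts and the exponent $\sum_{(i,j)\in\lambda/(c^a)}(\lambda_j'-i)$ correctly, and recognizing the telescoping product as the MacMahon factor.

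The main obstacle will be the final product manipulation: showing that the raw output of Theorem~\ref{thm:KS_8.7} (or of the $q$-integral) equals the stated product of $q$-Pochhammer symbols times the MacMahon factor times the reciprocal hook-length product. This requires carefully partitioning the hook lengths $h_\lambda(i,j)$ over the regions of $\lambda\setminus(0^n,c^a)$, separating out the $n\times a\times c$ box that produces the MacMahon ratio from the remaining cells that produce the three boundary Pochhammer factors, and verifying that the powers of $q$ assemble into $q^{\sum_{(i,j)\in\lambda/(c^a)}(\lambda_j'-i)}$. This is a substantial but essentially mechanical reconciliation of two product expressions over the same index set; the conceptual content lies entirely in setting up the correct forced diagonal and applying Theorem~\ref{thm:KS_8.7}, after which the identity should follow by collecting factors.
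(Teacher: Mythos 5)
Your primary route is essentially the paper's proof: split $\MM(n,a,b,c,d,1)$ along the main diagonal into an upper-right shifted staircase piece (handled by the SSYT formula \eqref{eqn:ssytgf1}) and a reflected lower-left piece (handled as row strict tableaux via \eqref{eqn:rstgf}), encode the bound $N$ and the skew notch by attaching triangular staircases with forced constant entries, glue the two halves by summing over the common diagonal $\mu\in\Par_n$, and convert that sum to a $q$-integral with Lemma~\ref{lem:gf}. Two points need correction, though.

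First, your hedge that the final evaluation ``should instead be a bounded MacMahon-type product'' is the wrong instinct: the integral one arrives at is
$\int \overline{\Delta}(X)^2\prod_i(qx_i/q^{a};q)_{a+c}(qx_i/q^{N+b+1};q)_{b+d}\,d_qX$
over a simplex whose endpoints are powers of $q$, and the identity that closes the proof is precisely the Askey--Habsieger--Kadell--Evans $q$-Selberg integral \eqref{eqn:qSelbergint} with $\mathsf{a}=q^{N+b+1}$, $\mathsf{b}=q^{a}$ (after observing that a vanishing factor lets one shift the lower limit of integration). Without committing to that specific evaluation the argument does not close. Second, your ``cleaner alternative'' --- realizing the whole bounded tableau set as RPPs of a single shifted staircase with a fully forced reverse diagonal and applying Theorem~\ref{thm:KS_8.7} once --- cannot work. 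A shifted RPP with prescribed diagonal only governs the cells on one side of the diagonal, and the shape $\MM(n,a,b,c,d,1)$ is genuinely two-sided with independent parameters $(a,b)$ and $(c,d)$ on the two sides; moreover $n$ of the diagonal entries are \emph{not} forced by the bound and must remain free. That is exactly why the proof requires two separate applications of Corollary~\ref{cor:gfs} (one SSYT-type, one RST-type) multiplied together and summed over $\mu$, i.e.\ the $q$-integral is unavoidable. The single-application trick works for MacMahon's box theorem only because the rectangle sits entirely on one side of the staircase diagonal and every diagonal entry is forced.
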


\begin{proof}
Recall from Corollary \ref{cor:gfs} that
\begin{equation}\label{eqn:ssytgf}
\sum_{\substack{T\in \SSYT((\delta_{n+1}+\lambda)^\ast) \\ \rdiag (T)=\mu+\delta_n}} q^{|T|} 
= \frac{q^{|\mu+\delta_n|}}{\prod_{j=1}^n (q;q)_{\lambda_j +n -j}} \overline{a}_{\lambda+\delta_n }(q^{\mu+\delta_n}).
\end{equation}
We first consider the upper right half part of $\pi$, divided at the main diagonal, that is, $\pi^{rh} := (n+a+b, n+a+b-1,\dots, b+1)^\ast/(\delta_{a+1})^\ast$. To utilize \eqref{eqn:ssytgf}, we fill the skewed $(\delta_{a+1})^\ast$ part 
by $0$'s in the first row, $1$'s in the second row, and so on, and by $(a-1)$ in the $a$-th row. 
Similarly, we attach $(\delta_{b +1}) ^\ast$ below the last row and fill it with $(N+1)$'s in the first row, $(N+2)$'s in the second row, and so on, and with $(N+b)$ in the $b$-th row. 
Then 
\begin{align*}
\sum_{\substack{T\in \SSYT(\pi^{rh}) \\ \rdiag(T)=\mu+\delta_n \\ \min(T)\ge 0, ~\max(T)\le N }}q^{|T|}
&=q^{-\binom{a +1}{3}-\binom{b +2}{3}-N\binom{b +1}{2}}
\sum_{\substack{T\in \SSYT((\delta_{n+a+b+1})^\ast)\\ \rdiag(T)= ( N+b, \dots, N+1, \mu_1 +n-1,\dots, \mu_n, a-1,\dots, 1,0)}}q^{|T|}\\
&= \frac{q^{-\binom{a}{3}-\binom{b +1}{3}-N\binom{b}{2}+|\mu|+\binom{n}{2}}}{\prod_{j=1}^{n+a+b-1}(q;q)_j}\overline{\Delta}(q^{N+b},\dots, q^{N+1},q^{\mu +\delta_n},q^{a-1},\dots, q, 1).
\end{align*}
On the other hand, to deal with the lower-left half of $\pi$ using \eqref{eqn:rstgf} in Corollary \ref{cor:gfs}, 
we reflect the lower-left half of $\pi$ along the main diagonal and denote that part by $\pi^{lh}$. That is, 
$\pi^{lh} =  (n+c+d, n+c+d-1,\dots, d+1)^\ast/(\delta_{c+1})^\ast$.
Note that since we have reflected the diagram along the main diagonal, the fillings satisfying the conditions of semistandard Young tableaux becomes 
row strict tableaux.
Keeping this in mind, again attach 
$(\delta_{c +1})^\ast$ in front and $(\delta_{d +1})^\ast$ below the last row, and fill the cells by $(-c)$ in the first column, 
$(-c +1)$'s in the second column, and so on, and by $(-1)$'s in the $c$-th column of the $(\delta_{c+1})^\ast$ part attached in front, and 
by $(N-d+1)$ in the first column of the  $(\delta_{d +1})^\ast$ part, by $(N-d+2)$'s in the second column, and so on, and by $N$'s in the last column.
Then we have 
\begin{align*}
&\sum_{\substack{T\in \RST(\pi^{lh}) \\ \rdiag(T)=\mu+\delta_n \\ \min(T)\ge 0, ~\max(T)\le N }}q^{|T|}\\
&=q^{\binom{c +2}{3}+\binom{d +1}{3}-N\binom{d +1}{2}}
\sum_{\substack{T\in \RST((\delta_{n+c+d +1})^\ast)\\ \rdiag(T)= (N, N-1, \dots, N-d+1,\mu_1 +n-1,\dots, \mu_n,-1,\dots, -c )}}q^{|T|}\\
&= \frac{q^{\binom{c +1}{3}+\binom{d}{3}-N\binom{d}{2}+\binom{n+c+d+1}{3}+|\mu|+\binom{n}{2}}}{\prod_{j=1}^{n+c+d-1}(q;q)_j}\overline{\Delta}(q^N, q^{N-1},\dots, q^{N-d+1},q^{\mu+\delta_n},q^{-1},\dots, q^{-c}).\end{align*}
Hence, 
\begin{align*}
&\sum_{\substack{T\in\SSYT(\pi)\\ \min(T)\ge 0,~\max(T) \le N}} q^{|T|} =\sum_{\mu\in \Par_n}q^{-|\mu|-\binom{n}{2}}
\sum_{\substack{T\in \SSYT(\pi^{rh}) \\ \rdiag(T)=\mu+\delta_n \\ \min(T)\ge 0, ~\max(T)\le N }}q^{|T|} \sum_{\substack{T\in \SSYT(\pi^{lh}) \\ \rdiag(T)=\mu+\delta_n \\ \min(T)\ge 0, ~\max(T)\le N }}q^{|T|}\\
 &=  \frac{q^{-\binom{a}{3}-\binom{b +1}{3}+\binom{c +1}{3}+\binom{d}{3}-N\left(\binom{b}{2}+\binom{d}{2}\right)+\binom{n+c+d+1}{3}+|\mu|+\binom{n}{2}}}{\prod_{j=1}^{n+a+b-1}(q;q)_j \prod_{j=1}^{n+c+d-1}(q;q)_j}\\
 &\quad \times \overline{\Delta}(q^{N+b},\dots, q^{N+1},q^{\mu +\delta_n},q^{a-1},\dots, q, 1) \overline{\Delta}(q^N, q^{N-1},\dots, q^{N-d+1},q^{\mu+\delta_n},q^{-1},\dots, q^{-c})\\
 &= \frac{q^{-\binom{a}{3}-\binom{b +1}{3}+\binom{c +1}{3}+\binom{d}{3}-N\left(\binom{b}{2}+\binom{d}{2}\right)+\binom{n+c+d+1}{3}}}{(1-q)^n \prod_{j=1}^{n+a+b-1}(q;q)_j \prod_{j=1}^{n+c+d-1}(q;q)_j}\\
 & \quad\times \int_{q^{b-s_2 +1}\le x_1 \le \cdots \le x_n\le q^{a+r_1}} 
  \overline{\Delta}(q^{N+b},\dots, q^{N+1},x_1,\dots, x_n ,q^{a-1},\dots, q, 1)\\
  & \qquad\qquad\qquad\times  \overline{\Delta}(q^N, q^{N-1},\dots, q^{N-d+1},x_1,\dots, x_n,q^{-1},\dots, q^{-c})d_q x_1 \cdots d_q x_n.
\end{align*}
Note that 
\begin{multline*}
  \overline{\Delta}(q^{N+b},\dots, q^{N+1},x_1,\dots, x_n ,q^{a-1},\dots, q, 1)\\
 = q^{\binom{a}{3}+\binom{b+1}{3}+b\binom{a}{2}+n\left( \binom{a}{2}+\binom{b+1}{2}\right)+N\left( \binom{b}{2}+bn\right)}\overline{\Delta}(x_1,\dots, x_n)\\
 \times \prod_{i=1}^{a -1}(q;q)_i \prod_{i=1}^{b -1} (q;q)_i \prod_{i=1}^{b}( q^{N-a +1+i};q)_{a} \prod_{i=1}^n (q^{-a+1} x_i ;q)_{a}(q^{-N-b}x_i ;q)_{b}
\end{multline*}
and 
\begin{multline*}
  \overline{\Delta}(q^N, q^{N-1},\dots, q^{N-d+1},x_1,\dots, x_n,q^{-1},\dots, q^{-c})\\
 = q^{-2\binom{c+1}{3}-\frac{1}{6}d(d-1)(2d-1)-d\binom{c+1}{2}-n\left( \binom{c+1}{2}+\binom{d}{2}\right)+N\left(\binom{d}{2}+dn \right) }\overline{\Delta}(x_1,\dots, x_n)\\
 \times \prod_{i=1}^{c-1}(q;q)_i \prod_{i=1}^{d -1} (q;q)_i  \prod_{i=1}^{d}( q^{N+2-i};q)_{c}\prod_{i=1}^n (q x_i ;q)_{c}(q^{-N}x_i ;q)_{d}.
\end{multline*}
Applying the above computations gives 
\begin{align*}
&\sum_{\substack{T\in\SSYT(\pi)\\ \min(T)\ge 0,~\max(T) \le N}} q^{|T|}=\\
&= q^{-\binom{c+1}{3}-\binom{d+1}{3}+b\binom{a}{2}-d\binom{c+1}{2}+\binom{n+c+d+1}{3}+n\left(\binom{a}{2}+\binom{b+1}{2}-\binom{c+1}{2}-\binom{d}{2} \right)+nN(b+d)}\\
&\qquad \times \frac{\PPhi_q (a)\PPhi_q (b)\PPhi_q(c)\PPhi_q(d)\prod_{i=1}^{b}(q^{N-a+1+i};q)_{a}\prod_{i=1}^{d}(q^{N+2-i};q)_{c}}{(1-q)^n\PPhi_q(n+a+b)\PPhi_q(n+c+d)}\\
&\qquad  \times  \int_{q^{N-d +1}\le x_1 \le \cdots \le x_n\le q^{a}}  \overline{\Delta}(x_1,\dots, x_n) ^2
\prod_{i=1}^n (q x_i / q^{a} ;q)_{a+c }(q x_i /q^{N+b+1} ;q)_{b+d}  d_q x_1 \cdots d_q x_n.
\end{align*}
Note that the factor $(qx_i/q^{N+b+1};q)_{b+d}$ in the integrand becomes zero for the values $q^{N-b +1}\le x_1\le q^{N+b}$. Hence we can change the lower end of the integral by $q^{N+b+1}$. Then the integral is a special case of the $q$-Selberg integral which was conjectured by Askey\cite{Askey1980} and proved by Habsieger\cite{Habsieger1988}, Kadell\cite{Kadell1988b} and Evans\cite{Evans1992} :
\begin{align}
& \int_{\mathsf{a} \le X\le \mathsf{b}}\Delta (X) ^2 \prod_{i=1}^n \left(\frac{q x_i}{\mathsf{a}}\right)_{\alpha -1}\left(\frac{q x_i}{\mathsf{b}} \right)_{\beta -1}d_q x_1 \cdots d_q x_n\nonumber\\
&= (-1)^{\binom{n}{2}}q^{\binom{n}{3}}\prod_{i=0}^{n-1} \frac{\Gamma_q (\alpha+i) \Gamma_q (\beta +i) \Gamma_q (i+1) \left( \frac{\mathsf{a}}{\mathsf{b}}\right)_{\beta +i}\left( \frac{\mathsf{b}}{\mathsf{a}}\right)_{\alpha+i}(\mathsf{ab})^{i+1}}{\Gamma_q (\alpha+\beta+n+i-1)(\mathsf{a}-\mathsf{b})}\nonumber\\
&=  (-1)^{\binom{n}{2}}q^{\binom{n}{3}}\prod_{i=0}^{n-1} \frac{(1-q)^{n-2i} (q;q)_{\alpha +i-1}(q;q)_{\beta +i-1}(q;q)_i  \left( \frac{\mathsf{a}}{\mathsf{b}}\right)_{\beta +i}\left( \frac{\mathsf{b}}{\mathsf{a}}\right)_{\alpha+i}(\mathsf{ab})^{i+1}}{(\mathsf{a}-\mathsf{b})(q;q)_{\alpha + \beta +n+i-2}}.\label{eqn:qSelbergint}
\end{align}
By specializing $\mathsf{a}=q^{N+b +1}$, $\mathsf{b}=q^{a}$, $\alpha=b+d+1$, $\beta = a+c +1$, we get 
\begin{align*}
&  \int_{q^{N+b+1}\le x_1 \le \cdots \le x_n\le q^{a}}  \overline{\Delta}(x_1,\dots, x_n) ^2
\prod_{i=1}^n (q x_i / q^{a} ;q)_{a+c }(q x_i /q^{N+b+1} ;q)_{b+d}  d_q x_1 \cdots d_q x_n\\
&\qquad = q^{\sum_{i=1}^{n-1}i^2 +2a\binom{n+1}{2}+(a-b-N)n(b+d)+(b+d)\binom{n}{2}+n\binom{b+d}{2}}\\
&\qquad\qquad\times (1-q)^n \prod_{i=1}^n (q^{N-n-a-d+1+i};q)_{n+a+b+c+d}\\
&\qquad\qquad\times \frac{\PPhi_q(n)\PPhi_q(n+a+c)\PPhi_q(n+b+d)\PPhi_q(n+a+b+c+d)}{\PPhi_q(a+c)\PPhi_q(b+d)\PPhi_q(2n+a+b+c+d)}.
\end{align*}
By putting the result of evaluating the Selberg-type integral and simplifying the $q$-power, we get 
\begin{multline*}
\sum_{\substack{T\in\SSYT(\pi)\\ \min(T)\ge 0,~\max(T) \le N}} q^{|T|}=\\
q^{a\binom{n+d}{2}+b\binom{n+a}{2}+n\binom{n+a+d}{2}}
 \prod_{i=1}^{b}(q^{N-a +1+i};q)_{a} \prod_{i=1}^{d} (q^{N+2-i};q)_{c}\prod_{i=1}^n (q^{N-a-d-n+1+i};q)_{n+a+b+c+d}\\
 \times \frac{\PPhi_q (n)\PPhi_q (a)\PPhi_q (b)\PPhi_q (c)\PPhi_q (d)\PPhi_q (n+a+c)\PPhi_q (n+b+d)\PPhi_q (n+a+b+c+d) }{\PPhi_q (a+c)\PPhi_q (b+d)\PPhi_q (n+a+b)\PPhi_q (n+c+d)\PPhi_q (2n+a+b+c+d)}.
\end{multline*}
Note that 
$$a\binom{n+d}{2}+b\binom{n+a}{2}+n\binom{n+a+d}{2}=\sum_{(i,j)\in \lambda/(c^{a})}(\lambda_j ' -i).$$
Also noting that 
$$ \frac{\PPhi_q (a)\PPhi_q (c)\PPhi_q (n)\PPhi_q (n+a+c)}{\PPhi_q (a+c)\PPhi_q (n+a)\PPhi_q (n+c)}= \prod_{i=1}^n \prod_{j=1}^{a}\prod_{k=1}^{c}\frac{1-q^{i+j+k-1}}{1-q^{i+j+k-2}}$$
and considering the hook lengths of the cells in $\lambda\setminus(0^n,c^a)$, we can rewrite the result as 
\begin{align*}
s_\pi(1,q,q^2,\dots,q^N) &=\sum_{\substack{T\in\SSYT(\pi)\\ \min(T)\ge 0,~\max(T) \le N}} q^{|T|}\\
&= q^{ \sum_{(i,j)\in \lambda/(c^{a})}(\lambda_j ' -i)}\\
&\qquad \times \prod_{i=1}^{b}(q^{N-a +1+i};q)_{a} \prod_{i=1}^{d} (q^{N+2-i};q)_{c}\prod_{i=1}^n (q^{N-a-d-n+1+i};q)_{n+a+b+c+d}\\
&\qquad\times \prod_{i=1}^n \prod_{j=1}^{a}\prod_{k=1}^{c}\frac{1-q^{i+j+k-1}}{1-q^{i+j+k-2}}
\cdot \prod_{(i,j)\in \lambda\setminus(0^n,c^a)}\frac{1}{1-q^{h_\lambda(i,j)}}.
\end{align*}
\end{proof}

\section{Skew trace generating function}
\label{sec:trace}

In this section, we prove Theorem~\ref{thm:trace}, which is restated as follows. 
\begin{thm}
Let $\pi=\MM(n,a,b,c,d,m)$. Then
\begin{multline*}
\sum_{T\in\SSYT(\pi)} x^{\tr(T)}q^{|T|} =
x^{n a +\binom{n}{2}}q^{\sum_{(i,j)\in \lambda/(c^a)} (\lambda_j '-i)}\\
\times  \prod_{i=1}^n \prod_{j=1}^{a}\prod_{k=1}^{c}\frac{1-q^{m(i+j+k-1)}}{1-q^{m(i+j+k-2)}}
\prod_{(i,j)\in \lambda\setminus(0^n, c^a)}\frac{1}{1-x^{\chi(i,j)}q^{h_\lambda (i,j)}},
\end{multline*}
where 
$$
\chi(i,j)=\begin{cases} 1, &\text{ if } (i,j) \in ((n+c)^{n+a}),\\
0,& \text{ otherwise}.\end{cases}
$$
\end{thm}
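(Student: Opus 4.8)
The plan is to adapt the $q$-integral method already used for Theorem~\ref{thm:GMBT} and Theorem~\ref{thm:pi}. As in the proof of Theorem~\ref{thm:GMBT}, I would cut $\pi=\MM(n,a,b,c,d,m)$ along its main diagonal into an upper-right half $\pi^{rh}$ and a reflected lower-left half $\pi^{lh}$, fill the skew staircases of each half so that the fixed-diagonal generating functions \eqref{eqn:ssytgf1} and \eqref{eqn:rstgf} of Corollary~\ref{cor:gfs} apply, and glue the two pieces along their common length-$n$ diagonal. The $m$-dilation caused by the $(m-1)\delta$ parts of $\lambda$ would be handled exactly as in the proof of Theorem~\ref{thm:pi}, by pulling the dilation out of the alternants and applying the change of variables of Lemma~\ref{lem:cov} with $p=q^m$. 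Setting $x=1$ throughout must reproduce Theorem~\ref{thm:MPP3}, and I would use this as a running consistency check.

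The new feature is the weight $x^{\tr(T)}$, and the central observation is that the trace is diagonal-local: for a tableau of a shifted shape one has $\tr(T)=|\rdiag(T)|$, so fixing $\rdiag(T)=\nu$ multiplies each generating function in Corollary~\ref{cor:gfs} by the single monomial $x^{|\nu|}$. Carrying this factor through the gluing over the shared diagonal $\nu=\mu+\delta_n$, I obtain
\[
\sum_{T\in\SSYT(\pi)}x^{\tr(T)}q^{|T|}=\sum_{\mu\in\Par_n}x^{|\mu+\delta_n|}q^{-|\mu+\delta_n|}\,G^{rh}_\mu\,G^{lh}_\mu,
\]
where $G^{rh}_\mu$ and $G^{lh}_\mu$ are the two fixed-diagonal sums. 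Applying Lemma~\ref{lem:gf} to turn the $\mu$-sum into a $q$-integral, the diagonal is identified with the integration variables via $x_i=q^{\mu_i+n-i}$, so the extra weight becomes $x^{|\mu+\delta_n|}=\prod_{i=1}^n x_i^{\log_q x}$. In other words, the trace inserts the monomial factor $\prod_{i=1}^n x_i^{\log_q x}$ into the integrand, which is the same as shifting the exponent parameter of the $q$-Selberg integrand by $\log_q x$.

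The evaluation would then use the $q$-Selberg integral \eqref{eqn:qSelbergint}, read through its $q$-Gamma (continuous-parameter) form so that the non-integer shift $\beta=\log_q x$ is admissible. Writing $x=q^{\beta}$, the shift replaces a base $q^{s}$ by $q^{s+\beta}=x\,q^{s}$ in precisely those Pochhammer factors that involve the shifted parameter; these are the factors indexed by the cells whose hook crosses the main diagonal, namely the rectangle $(n+c)^{n+a}$, producing the denominators $1-x\,q^{h_\lambda(i,j)}$ there while leaving the MacMahon box product $\prod_{i,j,k}\frac{1-q^{m(i+j+k-1)}}{1-q^{m(i+j+k-2)}}$ and all non-crossing hook factors free of $x$. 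Collecting the residual $x$-powers, namely $x^{|\delta_n|}=x^{\binom n2}$ together with the contribution of the shift (equivalently, the minimal possible value of $\tr(T)$), should yield the prefactor $x^{na+\binom n2}$. The unbounded shape is recovered as the $N\to\infty$ limit of the bounded computation, which sends the lower endpoint of the $q$-Selberg integral to $0$.

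The hardest part will be the rigorous handling of the non-integer shift $\beta=\log_q x$ in the $q$-Selberg integral: one must justify \eqref{eqn:qSelbergint} for a continuous exponent parameter and then identify exactly which Pochhammer factors acquire the extra factor $x$, matching them cell-by-cell to the diagonal-crossing rectangle $(n+c)^{n+a}$ and verifying that every other factor is unchanged. A second, purely technical difficulty is keeping the large $q$-power prefactors consistent through both the dilation substitution of Lemma~\ref{lem:cov} and the trace shift, so that the final $q$-exponent collapses to $\sum_{(i,j)\in\lambda/(c^a)}(\lambda'_j-i)$ and the $x$-exponent to $na+\binom n2$.
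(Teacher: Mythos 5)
Your proposal is correct and follows essentially the same route as the paper: cut $\MM(n,a,b,c,d,m)$ along the diagonal, use the fixed-diagonal generating functions of Corollary~\ref{cor:gfs} with the trace weight entering as a monomial in the diagonal sum, convert to a $q$-integral via Lemma~\ref{lem:gf}, dilate via Lemma~\ref{lem:cov}, and evaluate by the $q$-Selberg integral \eqref{eqn:qSelbergint} with lower endpoint $\mathsf{a}=0$. The ``non-integer shift'' you flag as the hardest part is dispatched in the paper by simply setting $x=q^{t}$ for an arbitrary integer $t$ (sufficient since both sides are power series in $x$ and $q$), so the parameter $\alpha=\frac{t+b+d+1-m}{m}+1$ stays within the already-established form of the integral and no continuous-parameter extension is needed.
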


\begin{proof}
Since both sides are power series in $x$ and $q$, is sufficient to show for $x=q^t$, where $t$ is an arbitrary integer. The idea is that we divide $\pi$ in two parts along the diagonal, compute the 
generating functions of the upper-right half and the lower-left half separately, and lastly combine them together.

The upper-right half is $\VV(n,a,b,m)$ (see Figure~\ref{fig:pi}).
We denote this upper-half by $\pi^{u}$ and the lower-left half by $\pi^{d}$. Note that $\pi^d = (\VV(n,c,d,m))'$. 

We consider $\pi^u$ first. 
 To utilize the generating function formula for the semistandard Young tableaux of shifted shapes 
with fixed diagonals given in Corollary \ref{cor:gfs}, we fill the top row of the skewed part $(\delta_{a +1})^\ast$ by $0$'s,  the second row by $1$'s, and so on, and the $a$-th row by $a-1$.
If we say we fixed the diagonal cells by $\nu_u=(\mu_1+n-1,\dots, \mu_{n-1},\mu_n ,a-1,\dots, 1,0)$, then we have 
\begin{multline*}
\sum_{\substack{T\in\SSYT(\pi^u)\\ \rdiag(T) = (\mu_1 +n-1, \dots, \mu_n)}} q^{|T| +t \cdot \tr (T)}
=q^{-\binom{a+1}{3}-t\binom{a}{2}}\sum_{\substack{T\in\SSYT(((b +1)^{n+a}+m\delta_{n+a})^\ast)\\ \rdiag(T) = \nu_u}} q^{|T|+t \cdot \tr (T)}\\
= \frac{q^{-\binom{a +1}{3}}\cdot q^{\binom{a}{2}+(t+1)|\mu+\delta_n|}}{\prod_{j=1}^{n+a}(q;q)_{b +m(n+a -j)}}
\overline{a}_{(b^{n+a})+m\delta_{n+a }}(q^{\mu_1 +n-1},\dots,q^{\mu_n },q^{a-1},\dots, q^0).
\end{multline*}
For the lower-left half, we consider the transpose of $\pi^d$, i.e., $(\pi^d )'= \VV(n,c,d,m)$. To satisfy the inequality condition of semistandard Young 
tableaux, after combined with the upper-right half, the fillings in this part should be row strict tableaux. 
Let us recall the generating function for the row strict tableaux with fixed diagonal :
\begin{equation}\label{eqn:RST}
 \sum_{\substack{T\in \RST((\delta_{n+1}+\rho)^\ast) \\ \rdiag (T)=\mu +\delta_n}} q^{|T|} =
\frac{q^{|\mu+\delta_n|+\nn(\rho')-\nn(\rho)+n|\rho|+\binom{n+1}{3}}}{\prod_{j=1}^n (q;q)_{\rho_j +n -j}} \overline{a}_{\rho+\delta_n }(q^{\mu+\delta_n}).
\end{equation}
To utilize the generating function for the row strict tableaux with fixed diagonal, we fill the skewed part $(\delta_{n+b+1})^\ast$ by 
$-b$ in the first column, $(-b+1)$'s in the second, and so on, and by $(-1)$'s in the last column. 
Then, in our setting, $\rho$ part in \eqref{eqn:RST} is $(d^{n+c})+(m-1)\delta_{n+c}$. We can compute 
\begin{align*}
& b(\rho') = (n+c)\binom{d}{2}+(m-1)^2 \binom{n+c+1}{3} + d (m-1)\binom{n+c }{2} -\binom{m}{2}\binom{n+c}{2},\\
& b(\rho) = d \binom{n+c}{2}+(m-1)\binom{n+c}{3},\\
& (c+n)|\rho| = (n+c)\left( d (n+c)+ (m-1)\binom{n+c }{2}\right).
\end{align*}
Let $p_\rho (c,d,n,m):= \nn(\rho')-\nn(\rho)+(n+c)|\rho|+\binom{n+c+1}{3}$. Then we have 
\begin{multline*}
\sum_{\substack{T\in\SSYT((\pi^d)')\\ \rdiag(T) = (\mu_1 +n-1,\dots, \mu_n)}} q^{|T|+t \cdot \tr (T)}
=q^{\binom{c +2}{3}+t\binom{c+1}{2}}\sum_{\substack{T\in\SSYT(((d+1) ^{n+c}+m\delta_{c+n})^\ast)\\ \rdiag(T) = (\mu+\delta_n, -1, \dots, -c)}} q^{|T|+t\cdot \tr (T)}\\
= \frac{q^{\binom{c +2}{3}-\binom{c +1}{2}+p_\rho(c,d,n,m)+(t+1)|\mu+\delta_n|}}{\prod_{j=1}^{n+c}(q;q)_{d+m(n+c -j)}}
\overline{a}_{(d^{n+c})+m\delta_{n+c}}( q^{\mu_1 +n-1},\dots, q^{\mu_n }, q^{-1}, \dots , q^{-c}).
\end{multline*}
Hence the trace generating function for the semistandard Young tableaux of shape $\pi$ would be 
\begin{align*} 
& \sum_{T\in\SSYT(\pi)} q^{|T|+t \cdot \tr (T)} \\
&= \sum_{\mu\in \Par_n}q^{-(t+1)|\mu+\delta_n|}\sum_{\substack{T\in\SSYT(\pi^u)\\ \rdiag(T) = \mu +\delta_n}} q^{|T| +t \cdot \tr (T)} \sum_{\substack{T\in\SSYT((\pi^d)')\\ \rdiag(T) = \mu +\delta_n}} q^{|T| +t \cdot \tr (T)}\\
&= \frac{q^{-\binom{a}{3}+\binom{c +1}{3}+p_\rho(c,d,n,m)}}{(1-q)^n \prod_{j=1}^{n+a}(q;q)_{b +m(n+a-j)}\prod_{j=1}^{n+c}(q;q)_{d +m(n+c-j)}}\\
&\qquad\times \int_{0\le x_1 \le \cdots \le x_n \le 1}
\prod_{i=1}^n x_i ^t \cdot \overline{a}_{(b^{n+a})+m\delta_{n+a}}(x_1,\dots, x_n, q^{a -1},\dots, q^{0})\\
&\qquad\qquad\qquad\quad\times \overline{a}_{(d^{n+c})+m\delta_{n+c}}(x_1,\dots, x_n, q^{-1}, \dots, q^{-c})d_q x_1\cdots d_q x_n.
\end{align*}
Note that we have already computed 
\begin{align*}
&\overline{a}_{(b^{n+a})+m\delta_{n+a }}(x_1,\dots, x_n, q^{a -1},\dots, q^{0})\\
&= q^{b\binom{a}{2}+m\binom{a}{3}+mn\binom{a}{2}}\prod_{i=1}^{a -1}(q^m ;q^m)_i \prod_{i=1}^n x_i ^{b }(q^{m(1-a)}x_i ^m;q^m)_{a}\cdot
\overline{\Delta}(x_1 ^m,\dots, x_n ^m),
\end{align*}
and similarly, we can compute 
\begin{align*}
& \overline{a}_{(d^{n+c})+m\delta_{n+c }}(x_1,\dots, x_n, q^{-1}, \dots, q^{-c})\\
&= q^{-d\binom{c+1}{2}-2m\binom{c +1}{3}-mn\binom{c +1}{2}}\prod_{i=1}^{c -1}(q^m ;q^m)_i \prod_{i=1}^n x_i ^{d }(q^{m}x_i ^m;q^m)_{c}\cdot
\overline{\Delta}(x_1 ^m,\dots, x_n ^m).
\end{align*}
Combining all these gives 
\begin{align*}
& \sum_{T\in\SSYT(\pi)} q^{|T|+t\cdot \tr (T)} \\
&= q^{(b+mn)\binom{a}{2}-(d+mn)\binom{c+1}{2}+(m-1)\binom{a}{3}+(1-2m)\binom{c+1}{3}+p_\rho(c,d,n,m)}\\
&\quad\times \frac{\PPhi_{q^m}(a) \PPhi_{q^m}(c)}{(1-q)^n \prod_{j=0}^{n+a-1}(q;q)_{b +mj}\prod_{j=0}^{n+c-1}(q;q)_{d +m j}}\\
&\quad\times \int_{0\le x_1 \le \cdots \le x_n\le 1}\prod_{i=1}^n x_i ^{t+b+d } (q^{m(1-a)}x_i ^m ;q^m)_{a+c}\cdot \overline{\Delta} (x_1 ^m,\dots, x_n ^m) ^2
d_q x_1 \cdots d_q x_n.
\end{align*}
Here we used the notation $\PPhi_{q^m}(n)=\prod_{j=1}^{n-1}(q^m ;q^m)_j$. 
By making a change of variables $x_i ^m \mapsto x_i$ and letting $p=q^m$ in the $q$-integral, we get
\begin{align*}
& \sum_{T\in\SSYT(\pi)} q^{|T|+t\cdot \tr (T)} \\
&= q^{(b+mn)\binom{a}{2}-(d+mn)\binom{c+1}{2}+(m-1)\binom{a}{3}+(1-2m)\binom{c+1}{3}+p_\rho(c,d,n,m)}\\\
&\quad\times \frac{\PPhi_{q^m}(a) \PPhi_{q^m}(c)}{(1-q^m)^n \prod_{j=0}^{n+a-1}(q;q)_{b +mj }\prod_{j=0}^{n+c-1}(q;q)_{d +m j}}\\
&\quad\times \underbrace{\int_{0\le x_1 \le \cdots \le x_n\le 1}\prod_{i=1}^n x_i ^{\frac{t+b+d+1-m}{m}} (p^{1-a}x_i  ;p)_{a+c}\cdot \overline{\Delta}(x_1 ,\dots, x_n )^2 
d_p x_1 \cdots d_p x_n.}_{(\ast)}
\end{align*}
The last integral $(\ast)$ can be calculated by using \eqref{eqn:qSelbergint} with $\mathsf{a}=0$. When $\mathsf{a}=0$, 
\eqref{eqn:qSelbergint} becomes 
\begin{align*}
&\int_{0\le X \le \mathsf{b}} \Delta(X)^2 \prod_{i=1}^n x_i ^{\alpha -1}\left(\frac{q x_i}{\mathsf{b}} \right)_{\beta -1}d_q x_1 \cdots d_q x_n\\
&= q^{\binom{n}{3}-n\binom{\alpha}{2}}\prod_{i=1}^n \frac{\Gamma_q (\alpha-1+i)\Gamma_q (\beta-1+i)\Gamma_q (i)\cdot q^{\binom{\alpha-1+i}{2}}\cdot \mathsf{b}^{\alpha+2i-2}}{\Gamma_q (\alpha-1+\beta-1+n+i)}\\
&= q^{(\alpha-1)\binom{n}{2}+\frac{1}{6}n(n-1)(2n-1)}\cdot \mathsf{b}^{n\alpha+n(n-1)}(1-q)^n
\prod_{i=1}^n \frac{(q;q)_{\beta+i-2}(q;q)_{i-1}}{(q^{\alpha-1+i};q)_{n+\beta-1}}.
\end{align*}
If we let 
\begin{align*}
\alpha&= \frac{t+b+d +1-m}{m}+1,\\
\beta&= a+c +1,\\
\mathsf{b}&= p^{a}=q^{a\cdot m }
\end{align*}
to evaluate $(\ast)$, then we obtain 
\begin{align*}
&\int_{0\le x_1 \le \cdots \le x_n\le 1}\prod_{i=1}^n x_i ^{\frac{t+b+d+1-m}{m}} (p^{1-a}x_i  ;p)_{a+c}\cdot \overline{\Delta}(x_1 ,\dots, x_n ) ^2 
d_p x_1 \cdots d_p x_n\\
&=q^{(t+b+d+1)\left(a n +\binom{n}{2} \right)+2m\left( \binom{n}{3}+a  \binom{n}{2}\right)}
\dfrac{(1-q^m )^n \PPhi_{q^m}(n)\PPhi_{q^m}(a+c+n)}{\PPhi_{q^m}(a+c)\prod_{i=0}^{n-1}(q^{t+b+d+1-mi};q^m)_{a+c+n}}.
\end{align*}
Applying the above result of integration gives 
\begin{multline*}
\sum_{T\in\SSYT(\pi)} q^{|T|+t\cdot \tr (T)}  \\
=q^{(\ast\ast)}\dfrac{\PPhi_{q^m}(a)\PPhi_{q^m}(c)\PPhi_{q^m}(n)\PPhi_{q^m}(a+c+n)}{\PPhi_{q^m}(a+c)\prod_{j=0}^{a+n-1}(q;q)_{b+mj}\prod_{j=0}^{c+n-1}(q;q)_{d+mj}\prod_{i=0}^{n-1}(q^{t+b+d+1-mi};q^m)_{a+c+n}},
\end{multline*}
where 
\begin{align*}
(\ast\ast)= & t\left( a n  +\binom{n}{2}\right)\\
+& (b +mn)\binom{a}{2}-(d +mn)\binom{c+1}{2}+(m-1)\binom{a}{3}+(1-2m)\binom{c+1}{3}\\
+& (b+d+1)\left( a n  +\binom{n}{2}\right)+2m \left(\binom{n}{3}+a\binom{n}{2} \right) +p_\rho (c,d,n, m).
\end{align*}
Note that 
$$
(\ast\ast)- t\left( a n  +\binom{n}{2}\right)
= \sum_{(i,j)\in \lambda/(c)^{a}} (\lambda_j '-i),
$$
where  $\lambda=((n+b+c)^{n+a})+((m-1)\delta_{a+n}\cup \theta ')$, $\theta=(d^{n+c})+(m-1)\delta_{n+c}$.
Also note that 
\begin{align*}
&\frac{\PPhi_{q^m}(a)\PPhi_{q^m}(c)\PPhi_{q^m}(n)\PPhi_{q^m}(n+a+c)}{\PPhi_{q^m}(a+c)\prod_{j=0}^{n+a-1}(q;q)_{b+mj}\prod_{j=0}^{n+c-1}(q;q)_{d+mj}\prod_{i=0}^{n-1}(q^{t+b+d+1-mi};q^m)_{n+a+c}}\\
&= \prod_{i=1}^n \prod_{j=1}^{a}\prod_{k=1}^{c}\frac{1-q^{m(i+j+k-1)}}{1-q^{m(i+j+k-2)}}\cdot 
\prod_{(i,j)\in \lambda/(0^n, c^a)}\frac{1}{1-q^{t\cdot \chi(i,j)+h_\lambda (i,j)}},
\end{align*}
where 
$$\chi(i,j)=\begin{cases}
1,& \text{ if } (i,j)\in ((n+c)^{n+a}),\\
0, & \text{ otherwise.}
\end{cases}$$
Lastly, we replace $q^t$ by $x$. 
\end{proof}

\section*{Acknowledgement}
The authors are grateful to Igor Pak and Ole Warnaar for their helpful comments.


\begin{thebibliography}{10}

\bibitem{Askey1980}
R.~Askey.
\newblock Some basic hypergeometric extensions of integrals of {S}elberg and
  {A}ndrews.
\newblock {\em SIAM J. Math. Anal.}, 11(6):938--951, 1980.

\bibitem{Evans1992}
R.~J. Evans.
\newblock Multidimensional {$q$}-beta integrals.
\newblock {\em SIAM J. Math. Anal.}, 23(3):758--765, 1992.

\bibitem{Frame1954}
J.~S. Frame, G.~d.~B. Robinson, and R.~M. Thrall.
\newblock The hook graphs of the symmetric groups.
\newblock {\em Canadian J. Math.}, 6:316--324, 1954.

\bibitem{Habsieger1988}
L.~Habsieger.
\newblock Une {$q$}-int{\'e}grale de {S}elberg et {A}skey.
\newblock {\em SIAM J. Math. Anal.}, 19(6):1475--1489, 1988.

\bibitem{Kadell1988b}
K.~W.~J. Kadell.
\newblock A proof of {A}skey's conjectured {$q$}-analogue of {S}elberg's
  integral and a conjecture of {M}orris.
\newblock {\em SIAM J. Math. Anal.}, 19(4):969--986, 1988.

\bibitem{KimOh}
J.~S. Kim and S.~Oh.
\newblock The {S}elberg integral and {Y}oung books.
\newblock {\em J. Combin. Theory Ser. A}, 145:1--24, 2017.

\bibitem{KimStanton17}
J.~S. Kim and D.~Stanton.
\newblock On {$q$}-integrals over order polytopes.
\newblock {\em Adv. Math.}, 308:1269--1317, 2017.

\bibitem{KrattSch}
C.~Krattenthaler and M.~J. Schlosser.
\newblock The major index generating function of standard {Y}oung tableaux of
  shapes of the form ``staircase minus rectangle''.
\newblock In {\em Ramanujan 125}, volume 627 of {\em Contemp. Math.}, pages
  111--122. Amer. Math. Soc., Providence, RI, 2014.

\bibitem{MPP3}
A.~Morales, I.~Pak, and G.~Panova.
\newblock Hook formulas for skew shapes {III}. {M}ultivariate and product
  formulas.
\newblock \url{https://arxiv.org/abs/1707.00931}.

\bibitem{MPP1}
A.~H. Morales, I.~Pak, and G.~Panova.
\newblock Hook formulas for skew shapes {I}. {$q$}-analogues and bijections.
\newblock {\em J. Combin. Theory Ser. A}, 154:350--405, 2018.

\bibitem{Naruse}
H.~Naruse.
\newblock Schubert calculus and hook formula.
\newblock Talk slides at 73rd S\'em. Lothar. Combin., Strobl, Austria, 2014;
  available at
  \url{https://www.emis.de/journals/SLC/wpapers/s73vortrag/naruse.pdf}.

\bibitem{EC2}
R.~P. Stanley.
\newblock {\em Enumerative Combinatorics. {V}ol. 2}, volume~62 of {\em
  Cambridge Studies in Advanced Mathematics}.
\newblock Cambridge University Press, Cambridge, 1999.
\newblock With a foreword by Gian-Carlo Rota and appendix 1 by Sergey Fomin.

\bibitem{EC1}
R.~P. Stanley.
\newblock {\em Enumerative Combinatorics. {V}ol. 1, second ed.}
\newblock Cambridge University Press, New York/Cambridge, 2011.

\bibitem{Warnaar2005}
S.~O. Warnaar.
\newblock {$q$}-{S}elberg integrals and {M}acdonald polynomials.
\newblock {\em Ramanujan J.}, 10(2):237--268, 2005.

\end{thebibliography}
\end{document}